\newtheorem{theorem}{Theorem}
\newtheorem{prop}{Proposition}
\newtheorem{lemma}{Lemma}
\newtheorem{corollary}{Corollary}
\newtheorem{preremark}{Remark}
\def\RR{{\Bbb R}}
\def\W2{W^{1,2}({\cal O}(M))}
\def\RR{\mathbb{R}}
\def\NN{\mathbb{N}}
\def\1half{\frac{1}{2}}
\newcommand{\abs}[1]{\left| #1 \right|}
\newcommand{\Hprod}[2]{\langle #1 , #2 \rangle_{\mathcal{H}}}
\newcommand{\Hnorm}[1]{\left\| #1 \right\|_{\mathcal{H}}}
\newcommand{\HHnorm}[1]{\left\| #1 \right\|_{\mathcal{H}^{\otimes 2}}}
\newcommand{\Lprod}[3]{\left\langle #1 , #2 \right\rangle_{L^2_{#3}}}
\newcommand{\Linfnorm}[1]{\left\| #1 \right\|_{\infty}}
\newcommand{\Lqnorm}[2]{\left\| #1 \right\|_{L^{2}_{#2}}}
\newcommand{\Lqprod}[3]{\left\langle #1 , #2 \right\rangle_{L^{2}_{#3}}}
\newcommand{\opnorm}[1]{\left\|#1\right\|_{\text{op}}}
\newcommand{\dint}{\int\hspace{-.2cm}\int}
\begin{document}

\title{On the second order Poincar$\acute{\text{e}}$ inequality and CLTs on Wiener-Poisson space}
%
%
%
\author{Juan Jos$\acute{\text{e}}$ V$\acute{\text{\i}}$quez R}

%
%
%
%
%
\maketitle

\begin{abstract}
An upper bound for the Wasserstein distance is provided in the general framework of the Wiener-Poisson space. Is obtained from this bound a second order Poincar$\acute{\text{e}}$-type inequality which is useful in terms of computations. For completeness sake, is made a survey of these results on the Wiener space, the Poisson space, and the Wiener-Poisson space, and showed several applications to central limit theorems with relevant examples: linear functionals of Gaussian subordinated fields (where the subordinated field can be processes like fractional Brownian motion or the solution of the Ornstein-Uhlenbeck SDE driven by fractional Brownian motion), Poisson functionals in the first Poisson chaos restricted to ``small'' jumps (particularly fractional L\'evy processes) and the product of two Ornstein-Uhlenbeck processes (one in the Wiener space and the other in the Poisson space). Also, are obtained bounds for their rate of convergence to normality.
\end{abstract}

\section{Introduction}

In recent years many papers have looked at combining Stein's method with Malliavin calculus in order to uncover new tools for proving varios central limit theorems (CLTs). For example, I. Nourdin and G. Peccati derived an upper bound for the Wasserstein (Kantorovich) distance (and other distances) on the Wiener space using Stein's equation, \cite{Nourdin0}. Later, the same authors along with G. Reinert derived a second order Poincar$\acute{\text{e}}$(-type) inequality which is useful (in terms of computations) for proving CLTs, and which in fact, can be seen as a quantitative extension of the Stein's method from which upper bounds for the rate of convergence to normality can be found, \cite{Nourdin}. The first two authors with A. R$\acute{\text{e}}$veillac extended these results to the multidimensional case, \cite{Nourdinmult}. In \cite{Peccati}, G. Peccati, J. L. Sol$\acute{\text{e}}$, M. S. Taqqu and F. Utzet, were able to find an upper bound, similar to the one in \cite{Nourdin0}, for the Wasserstein distance in the Poisson space. G. Peccati and C. Zheng succeeded in extending this to the multi-dimensional case in \cite{Peccati2}. All these works are important as they give quantitative tools for computing whether a random variable converges to normality or not, and if so, its rate of convergence.

\bigskip

The upper bound inequality can be written as,
$$d_W(F,N)\leq E\abs{1-\Hprod{\boldsymbol{D}F}{-\boldsymbol{D}L^{-1}F}}+E\bigl[\Hprod{\abs{x(\boldsymbol{D}F)^2}}{\abs{\boldsymbol{D}L^{-1}F}}\bigr]$$
where $d_W$ is the Wasserstein distance, $N\sim\mathcal{N}(0,1)$, $\boldsymbol{D}$ is the Malliavin derivative, $L^{-1}$ is the inverse of the infinitesimal generator of the Ornstein-Uhlenbeck (O-U) semigroup, and $\mathcal{H}$ is the underlying Hilbert space. In the case of the Wiener space upper bound, $\boldsymbol{D}$ is the Malliavin derivative defined in that space so this inequality holds even when the underlying Hilbert space is not $L^2_{\mu}$. Also, since there are no jumps here, the second term on the right disappears ($x$ is the size of the jump). In the Poisson space case, since we do not (as yet) have a Malliavin calculus theory developed for a general abstract Hilbert space, the underlying Hilbert space must be $\mathcal{H}=L^2_{\mu}$. A question naturally arises - can this be done for a general L$\acute{\text{e}}$vy process; that is, is this upper bound achievable in a mixed space, the Wiener-Poisson space? The main difficulty in answering this question is that in the Wiener-Poisson space we don't have decomposition in orthogonal polynomials (unlike with Hermite polynomials in the Wiener space, see \cite{NunnoOrthpol} for a complete explanation), and results like the equivalence between the Mehler semigroup and the Ornstein-Uhlenbeck semigroup. So, to overcome these shortcoming, will be necessary to deduce new formulas that will allow us to follow the ideas developed in \cite{Nourdin0} and \cite{Nourdin}, and recover their results for the Wiener-Poisson space. Will be shown that this bound still holds even when both spaces are involved.

\bigskip

Before get into the details, some notation: The measure $\mu$ on the underlying Hilbert space $L^2_{\mu}$ is defined by the underlying L$\acute{\text{e}}$vy process, that is, let $L_t$ be a L$\acute{\text{e}}$vy process ($L_t$ has stationary and independent increments, is continuous in probability and $X_0=0$, with $E[L_1^2]<\infty$) with L$\acute{\text{e}}$vy-triplet given by $(0,\sigma^2,\nu)$, where $\nu$ is the L$\acute{\text{e}}$vy measure, then $\dint_{\RR^+\times\RR}f(z) d\mu(z)=\sigma^2\int_{\RR^+}f(t,0) dt+\dint_{\RR^+\times\RR_0}f(t,x) x^2dtd\nu(x)$, where $\RR_0=\RR-\{0\}$. On the other hand, in order to define a Malliavin derivative in the Wiener-Poisson space it is sufficient to have a chaos decomposition of the space $L^2(\Omega)$. This was achieved in \cite{Ito} by K. It$\hat{\text{o}}$, so any random variable in $L^2(\Omega)$ has a projection on the $q^{\text{th}}$ chaos given by $I_q(f_q)$, where $f_q$ is a symmetric function in $L^2_{\mu}$. Also, when working in the Wiener-Poisson space, the Malliavin derivative can be regarded in terms of ``directions'', i.e., we can think of it as the derivative in the Wiener direction or the derivative in the Poisson direction. The fact that this can be done in this way was proved in \cite{Josep} by J. L. Sol$\acute{\text{e}}$, F. Utzet and J. Vives (a quick review of the theory is given below). They showed that the Malliavin derivative with parameter $z\in\RR^+\times\RR$ can be split in two cases, when $z=(t,0)$ and when $z(t,x)$ with $x\neq0$. The first case will be the derivative in the Wiener direction (intuitively because there are no jumps when $x=0$), and the second will be the derivative in the Poisson direction. A distinction between the Malliavin calculus in the Wiener space or the Poisson space and in this Wiener-Poisson space is the need to define two subspaces of $L^2(\Omega)$: one where the Malliavin derivative in the Wiener direction coincides with the usual Malliavin derivative in the Wiener space and is well defined, and another where the Malliavin derivative in the Poisson direction is well defined. These subspaces are denoted by Dom$\boldsymbol{D}^W$ and Dom$\boldsymbol{D}^J$.

\bigskip

\noindent {\bf Theorem 1.}{\em (Main result 1: upper bound)\\
Let $N\sim\mathcal{N}(0,1)$ and let $F\in$Dom$\boldsymbol{D}^W\cap$Dom$\boldsymbol{D}^J$ be such that $E[F]=0$. Then,}
\begin{align*}
d_W(F,N)\leq E\abs{1-\Lqprod{\boldsymbol{D}F}{-\boldsymbol{D}L^{-1}F}{\mu}}+E\bigl[\Lqprod{\abs{x(\boldsymbol{D}F)^2}}{\abs{\boldsymbol{D}L^{-1}F}}{\mu}\bigr]
\end{align*}
As mentioned above, in \cite{Nourdin}, the authors used the respective Wiener space upper bound to deduce a second order Poincar$\acute{\text{e}}$ inequality. In this sense, our second main result is the second order Poincar$\acute{\text{e}}$ inequality in the Wiener-Poisson space derived from the above upper bound. This version is weaker than the one in the Wiener space because of the scarcity of results in this space, that would enable us to prove it for functionals in Dom$\boldsymbol{D}^W\cap$Dom$\boldsymbol{D}^J$; nevertheless, is possible to state it for functionals that lie in one specific It$\hat{\text{o}}$-chaos.

\bigskip

\noindent {\bf Corollary 1.}{\em (Main result 2: Second order Poincar$\acute{\text{e}}$-type inequality)\\
Fix $q\in\NN$ and let $F=I_q(f)$ with $E[F]=\mu$ and Var$[F]=\sigma^2$, and assume that $N\sim\mathcal{N}(\mu,\sigma^2)$. Then,}
\begin{align*}
\hspace{-.5cm} d_W(F,N)&\leq \frac{\sqrt{2}}{q\sigma^2}\biggl(2E\bigl[\opnorm{\boldsymbol{D}^2F}^4\bigr]^{\frac{1}{4}}E\bigl[\Lqnorm{\boldsymbol{D}F}{\mu}^4\bigr]^{\frac{1}{4}}+E\biggl[\Lqnorm{\Lqprod{x}{(\boldsymbol{D}^2F)^2}{\mu}}{\mu}^2\biggr]^{\frac{1}{2}}\biggr)+\frac{1}{q\sigma^3}E\bigl[\Lqprod{\abs{x}}{\abs{\boldsymbol{D}F}^3}{\mu}\bigr]\\
&\leq\frac{\sqrt{2}}{q\sigma^2}\biggl(2E\bigl[\Lqnorm{\boldsymbol{D}^2F\otimes_1\boldsymbol{D}^2F}{\mu^{\otimes2}}^2\bigr]^{\frac{1}{4}}E\bigl[\Lqnorm{\boldsymbol{D}F}{\mu}^4\bigr]^{\frac{1}{4}}+E\biggl[\Lqnorm{\Lqprod{x}{(\boldsymbol{D}^2F)^2}{\mu}}{\mu}^2\biggr]^{\frac{1}{2}}\biggr)+\frac{1}{q\sigma^3}E\bigl[\Lqprod{\abs{x}}{\abs{\boldsymbol{D}F}^3}{\mu}\bigr]
\end{align*}

\bigskip

Thanks to these (Wiener, Poisson and Wiener-Poisson) upper bounds and second order Poincar$\acute{\text{e}}$ inequalities, many CLTs can be proved and generalizations made. In this paper, these bounds are reviewed for each space, showing their importance by giving applications with relevant examples. In the Wiener space case the second order Poincar$\acute{\text{e}}$ inequality is used to prove CLTs for linear functionals of Gaussian-subordinated fields when the decay rate of the covariance function of the underlying Gaussian process satisfies certain conditions. These CLTs are applied to the important cases where the underlying Gaussian process is either the fractional Brownian motion or the fractional-driven Ornstein-Uhlenbeck process, with $H\in\bigl(0,\frac{1}{2}\bigr)\cup\bigl(\frac{1}{2},1\bigr)$.

\bigskip

In the Poisson space case, the respective upper bound is used to prove that the small jumps process (jumps with length less than or equal to $\epsilon$) of a Poisson functional process with infinitely many jumps goes to a normal random variable when $\epsilon$ goes to zero. Furthermore, a remarkable extension, of the known result (proved in \cite{Asmussen}) which states that the small jumps process of a L$\acute{\text{e}}$vy process can be approximated by Brownian motion as $\epsilon$ goes to zero, is proved. It is extended to Poisson functionals $\bigl(I_1(f)\bigr)$ and showed that the small jumps process of this functional can be approximated by a Gaussian functional with the same kernel $f$ as $\epsilon$ goes to zero. Then is applyed this result to show that in order to simulate a fractional (pure jump) L$\acute{\text{e}}$vy process (FLP), it is sufficient to simulate a process with finitely many jumps plus an independent fractional Brownian motion (FBM).

\bigskip

Finally, the second order Poincar$\acute{\text{e}}$(-type) inequality, developed in this paper, is used to prove that the time average of the product of a Wiener Ornstein-Uhlenbeck process with a Poisson Ornstein-Uhlenbeck process converges to a normal random variable as time goes to infinity. This example highlights the importance of the inequality in the Wiener-Poisson space, since it cannot be achieved by the upper bounds in the Wiener or Poisson space. An estimate of the rate of convergence to normality is obtained in the examples where the second order Poincar inequalities is used.

\bigskip

The paper is organized as follows: In Section 2 is recalled the basic tools of Malliavin calculus on the Wiener space as well as the second order Poincar$\acute{\text{e}}$ inequality. Then is stated the Malliavin calculus results for the Wiener-Poisson space and explain how these tools have been extended. In Section 3 the theory developed in \cite{Nourdin} and \cite{Nourdin0} is reproduced using Malliavin calculus theory for the Wiener-Poisson space, and with this framework is stated a ``L$\acute{\text{e}}$vy version'' of the second order Poincar$\acute{\text{e}}$ inequality. Section 4 is dedicated to going over the inequalities for the Wiener, Poisson and Wiener-Poisson spaces. In the Wiener space case, a result proved in \cite{Nourdin} concerning CLTs of linear functionals of Gaussian-subordinated fields is extended. In the Poisson space case, a result on the simulation of small jumps for processes with infinitely many jumps is given. Finally, an example of application of the second order Poincar$\acute{\text{e}}$ inequality in the Wiener-Poisson space is showed.

\section{Preliminaries}

As mentioned above, the most important tool for proving CLTs on the Wiener space is the upper bound developed in \cite{Nourdin}. This requires various Malliavin calculus results on the Wiener space (Malliavin derivative, contraction of order $r$, Mehler formula, etc) which are extensively studied and explained in \cite{Nualart}. For the sake of the completeness, the basic tools from Malliavin calculus in the Wiener space are reviewed and then is defined the Malliavin calculus in the Wiener-Poisson setting, both needed in this paper.

\subsection{Malliavin Calculus on Wiener space}

Let $\mathcal{H}$ be a real separable Hilbert space. Assume there is a probability space $(\Omega,\mathcal{F},\mathbb{P})$ where $W:=\{W(h)/\ h\in\mathcal{H}\}$ is an isonormal Gaussian process, that is, $W$ is a centered Gaussian family s.t. $E[W(h_1)W(h_2)]=\Hprod{h_1}{h_2}$. Choose $\mathcal{F}$ to be the $\sigma$-algebra generated by $W$. Let $H_q$ be the $q^{\text{th}}$ Hermite polynomial, $H_q(x)=(-1)^qe^{\frac{x^2}{2}}\frac{\partial^q}{\partial x^q}\bigl(e^{-\frac{x^2}{2}}\bigr)$, and define the $q^{\text{th}}$ Wiener chaos of $W$ (denoted by $\mathbb{H}_q$) as the subspace of $L^2(\Omega):=L^2(\Omega,\mathcal{F},\mathbb{P})$ generated by $\{H_q(W(h))/\ h\in\mathcal{H},\Hnorm{h}=1\}$. Is important to underline that $L^2(\Omega)$ can be decomposed (Wiener chaos expansion) into an infinite orthogonal sum of the spaces $\mathbb{H}_q$: $L^2(\Omega)=\oplus_{q=0}^\infty \mathbb{H}_q$
\begin{preremark}\label{chaosintegral}
In the case where $\mathcal{H}=L^2_\mu$, for any $F\in L^2(\Omega)$,
\begin{align}\label{chaosrep0}
F=\sum_{q=0}^\infty I_q(f_q)
\end{align}
where $I_q$ is the $q^{\text{th}}$ multiple stochastic integral, $f_0=E[F]$, $I_0$ is the identity mapping on constants and $f_q\in L^2_{\mu^{\otimes q}}$ are symmetric functions uniquely determined by $F$.
\end{preremark}
Let $\mathcal{S}$ be the class of smooth random variables, i.e., if $F\in\mathcal{S}$ then there exists a function $\phi\in C^\infty(\RR^n)$ s.t. $\frac{\partial^k \phi}{\partial x_i^k}(x)$ has polynomial growth for all $k=0,1,2,3,\dots$ and $F=\phi(W(h_1),\dots,W(h_n))$, $h_i\in\mathcal{H}$. The Malliavin derivative of $F\in \mathcal{S}$ with respect to $W$ is the element of $L^2(\Omega,\mathcal{H})$ defined as
\begin{align}\label{malliavinderivative}
\boldsymbol{D}F=\sum_{i=1}^n\frac{\partial \phi}{\partial x_i}\bigl(W(h_1),\dots,W(h_n)\bigr)h_i
\end{align}
In particular $\boldsymbol{D}W(h)=h$ for every $h\in\mathcal{H}$. Notice that in this particular case we have an explicit relation between the covariance of $W$ and the inner product of the Malliavin derivate, $\text{Cov}[W(h_1)W(h_2)]=E[W(h_1)W(h_2)]=\Hprod{h_1}{h_2}=\Hprod{\boldsymbol{D}W(h_1)}{\boldsymbol{D}W(h_2)}$.
\begin{preremark}\label{hilbertderivative}
In the case of a centered stationary Gaussian process, $X_t$, the Hilbert space can be chosen in the following way:\\
Consider the inner product $\Hprod{1_{[0,t]}}{1_{[0,s]}}=\text{Cov}[X_tX_s]=C(t-s)$ and take the Hilbert space $\mathcal{H}$ as the closure of the set of step functions on $\RR$ with respect to this inner product. With this Hilbert space one has that $X_t=W(1_{[0,t]})$ and $\boldsymbol{D}X_t=1_{[0,t]}$ where $\boldsymbol{D}$ is the Malliavin derivative.
\end{preremark}
Since the Malliavin derivative verifies the chain rule, we have $\boldsymbol{D}f(F)=f'(F)\boldsymbol{D}F$, for any $f:\RR\rightarrow\RR$ of class $\mathcal{C}^1$ with bounded derivative (is also true for functions which are only a.e. differentiable, but with the assumption that $F$ has absolutely continous law). The second Malliavin derivative (denoted by $\boldsymbol{D}^2$) can be define recursively. For $k\geq1$ and $p\geq 1$, $\mathbb{D}^{k,p}$ denotes the closure of $\mathcal{S}$ with respect to the norm $\left\|\cdot\right\|_{k,p}$ defined by
$$\left\|F\right\|_{k,p}^p=E\bigl[\abs{F}^p\bigr]+\sum_{i=1}^kE\bigl[\left\|\boldsymbol{D}^iF\right\|^p_{\mathcal{H}^{\otimes i}}\bigr]$$
Consider now an orthonormal system in $\mathcal{H}$ denoted by $\{e_k/\ k\geq1\}$. Then, given elements $\phi\in\mathcal{H}^{\otimes k_1},\psi\in\mathcal{H}^{\otimes k_2}$, the contraction of order $r\leq\min\{k_1,k_2\}$ is the element of $\mathcal{H}^{\otimes (k_1+k_2-2r)}$ defined by
$$\phi\otimes_r\psi=\sum_{i_1,\dots,i_r=1}^\infty \left\langle \phi,e_{i_1}\otimes\cdots e_{i_r}\right\rangle_{\mathcal{H}^{\otimes r}}\left\langle \psi,e_{i_1}\otimes\cdots e_{i_r}\right\rangle_{\mathcal{H}^{\otimes r}}$$
In particular, $\phi\otimes_r\psi=\left\langle \phi,\psi\right\rangle_{\mathcal{H}^{\otimes r}}$ when $k_1=k_2=r$.
\begin{preremark}\label{prodwiener}
Again, in the white noise framework (when $\mathcal{H}=L^2_\mu$), for symmetric functions $\phi\in L^2_{\mu^{
\otimes k_1}}$, $\psi\in L^2_{\mu^{\otimes k_2}}$, the contraction is given by the integration of the first $r$ variables, i.e., $\phi\otimes_r\psi=\Lqprod{\phi}{\psi}{\mu^{\otimes r}}$. Also, we have a formula for the product of stochastic integrals:\footnote{$a\wedge b = \min\{a,b\}$ and $a\vee b = \max\{a,b\}$} $I_p(f)I_q(g)=\sum_{r=0}^{p\wedge q}r! {p \choose r}{q \choose r}I_{p+q-2r}(f\otimes_r g)$
\end{preremark}

Define the divergence operator $\delta$ as the adjoint of the operator $\boldsymbol{D}$, so if $F\in$Dom$\delta$ then $\delta(F)\in L^2(\Omega)$ and $E[\delta(F)G]=E[\Hprod{\boldsymbol{D}G}{F}]$.
\begin{preremark}\label{skorohod1}
When $\mathcal{H}=L^2_\mu$, the divergence operator $\delta$ is called the Skorohod integral. It is a creation operator in the sense that for all $F\in$Dom$\delta\subset L^2_{\mu\times\mathbb{P}}(T\times\Omega)$ with chaos representation $F(t)=\sum_{q=0}^\infty I_q\bigl(f_q(t,\cdot)\bigr)$ ($f_q\in L^2_{\mu^{\otimes(q+1)}}$ are symmetric functions in the last $q$ variables), $\delta(F)=\sum_{q=0}^\infty I_{q+1}(\widetilde{f}_q)$.\footnote{$\widetilde{f}$ is the symmetrizatoin of $f$, i.e., $\tilde{f}(z_1,\dots,z_q)=\frac{1}{q!}\sum_\sigma f(z_{\sigma(1)},\dots,z_{\sigma(q)})$}
\end{preremark}
For all $F\in L^2(\Omega)$ denote by $J_qF$ the projection of $F$ in the $q^{\text{th}}$ chaos. Then, the Ornstein-Uhlenbeck semigroup is the family of contraction operators $\{T_t\ /\ t\geq0\}$ on $L^2(\Omega)$ defined by $T_tF=\sum_{q=0}^\infty e^{-qt}J_qF$. Using Mehler's formula we can find an equivalence between Mehler's semigroup and the O-U semigroup. More formally, take $W'$ as an independent copy of $W$ defining $(W,W')$ on the product probability space $(\Omega\times\Omega,\mathcal{F}\otimes\mathcal{F}',\mathbb{P}\times\mathbb{P})$. Each $F\in L^2(\Omega)$ can be regarded as measurable map $F(W)$ from $\RR^{\mathcal{H}}$ to $\RR$ determined $\mathbb{P}\circ W^{-1}$-a.s. such that $T_tF=E'\bigl[F\bigl(e^{-t}W+\sqrt{1-e^{-2t}}W'\bigr)\bigr]$. The infinitesimal generator for this semigroup (denoted by $L$) is given by $LF=\sum_{q=0}^\infty-qJ_qF$ and Dom$L=\mathbb{D}^{2,2}$=Dom$(\delta\boldsymbol{D})$. It can be proved that, for $F\in$Dom$L$, $\delta\boldsymbol{D}F=-LF$. The pseudo-inverse of this operator (denoted by $L^{-1}$) is given by $L^{-1}F=\sum_{q=1}^\infty\frac{-1}{q}J_qF$, and is such that $L^{-1}F\in$Dom$L$ and $LL^{-1}F=F-E[F]$ for any $F\in L^2(\Omega)$.

The use of Hermite polynomials is extremely important in this setting. The relationship between Hermite polynomials and Gaussian random variables is the following. Let $Z_1,Z_2$ be two random variables with joint Gaussian distribution such that $E[Z_1^2]=E[Z_2^2]=1$ and $E[Z_1]=E[Z_2]=0$. Then for all $q,p\geq0$,
\begin{align}\label{normalhermite}
E\bigl[H_p(Z_1)H_q(Z_2)\bigr]=
\begin{cases}
q!\bigl(E[Z_1Z_2]\bigr)^q & \text{if } q=p\\
0 & \text{if } q\neq p
\end{cases}
\end{align}
On the other hand, it is possible to expand a $\mathcal{C}^2$ function $f:\RR\rightarrow\RR$ in terms of Hermite polynomials, that is,
\begin{align}\label{decomp}
f(x)=E[f(Z)]+\sum_{q=1}^\infty c_qH_q(x)
\end{align}
where the real numbers $c_q$ are given by $c_qq!=E[f(Z)H_q(Z)]$ and $Z\sim\mathcal{N}(0,1)$.
\begin{preremark}\label{hermiteintegral}
Notice that in the white noise case we have the relationship $H_q\bigl(W(h)\bigr)=\int_{T^q}h^{\otimes q}dW_{t_1}\cdots dW_{t_q}=I_q\bigl(h^{\otimes q}\bigr)$ so the decomposition (\ref{decomp}) of $f(W(h))$ can be regarded as
$$f(W(h))=\sum_{q=0}^\infty c_qI_q\bigl(h^{\otimes q}\bigr)$$
\end{preremark}

With (\ref{normalhermite}) and (\ref{decomp}) we are able to compute the covariance of a real function $f$ in the following way,
\begin{align}\label{covf}
\text{Cov}[f(Z_1)f(Z_2)]=\sum_{p,q=1}^\infty c_pc_qE\bigl[H_p(Z_1)H_q(Z_2)\bigr]=\sum_{q=1}^\infty c_q^2q!\bigl(E[Z_1Z_2]\bigr)^q
\end{align}

With this background, is possible now to state the main tool developed in \cite{Nourdin} and used in this paper.
\begin{lemma}\label{nourdinineq}
Let $F\in\mathbb{D}^{2,4}$ with $E[F]=\mu$ and $\text{Var}[F]=\sigma^2$. Assume that $N\sim\mathcal{N}(\mu,\sigma^2)$, then
\begin{align}\label{Wdist}
d_W(F,N)\leq\frac{\sqrt{10}}{2\sigma^2}E\bigl[\HHnorm{\boldsymbol{D}^2F\otimes_1\boldsymbol{D}^2F}^2\bigr]^{\frac{1}{4}}\times E\bigl[\Hnorm{\boldsymbol{D}F}^4\bigr]^{\frac{1}{4}}
\end{align}
where $d_W$ is the Wasserstein distance given by $d_W(U,N)=\sup_{\{g:\left\|g\right\|_{\text{Lip}}\leq1\}}\abs{E[g(U)]-E[g(N)]}$. If, in addition, the law of $F$ is absolutely continuous with respect to the Lebesgue measure, then
\begin{align}\label{TVdist}
d_{TV}(F,N)\leq\frac{\sqrt{10}}{\sigma^2}E\bigl[\HHnorm{\boldsymbol{D}^2F\otimes_1\boldsymbol{D}^2F}^2\bigr]^{\frac{1}{4}}\times E\bigl[\Hnorm{\boldsymbol{D}F}^4\bigr]^{\frac{1}{4}}
\end{align}
where $d_{TV}$ is the total variation distance given by $d_{TV}(U,N)=\sup_{A\in\mathcal{B}(\RR)}\abs{P(U\in A)-P(N\in A)}$
\end{lemma}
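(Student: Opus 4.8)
The plan is to deduce the statement from the first-order Malliavin--Stein inequality and then to control the variance of the random inner product by the Gaussian Poincar\'e inequality. Since $d_W$ and $d_{TV}$ are translation invariant and $\boldsymbol{D}$, $\boldsymbol{D}^2$, $L^{-1}$ all annihilate additive constants, I would first reduce to the centered case $\mu=0$. Writing $g_F:=\Hprod{\boldsymbol{D}F}{-\boldsymbol{D}L^{-1}F}$, the integration-by-parts formula $E[\Hprod{\boldsymbol{D}F}{u}]=E[F\,\delta(u)]$ with $u=-\boldsymbol{D}L^{-1}F$, together with $\delta\boldsymbol{D}=-L$ and $LL^{-1}F=F$, gives $E[g_F]=E[F^2]=\sigma^2$. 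The first-order bound $d_W(F,N)\le c_W\,E\abs{\sigma^2-g_F}$, obtained by inserting $F=\delta(-\boldsymbol{D}L^{-1}F)$ into the Stein equation for $\mathcal N(0,\sigma^2)$ (the jump term being absent on the Wiener space), then combines with Cauchy--Schwarz and $E[g_F]=\sigma^2$ to give $d_W(F,N)\le c_W\sqrt{\text{Var}(g_F)}$; rescaling the standard $\mathcal N(0,1)$ Stein solution shows $c_W=1/\sigma$ for the Wasserstein distance and $c_{TV}=2/\sigma^2$ for total variation.

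It remains to estimate $\text{Var}(g_F)$. I would invoke the Gaussian Poincar\'e inequality $\text{Var}(g_F)\le E[\Hnorm{\boldsymbol{D}g_F}^2]$ (immediate from the Wiener chaos expansion, since the $q$-th chaos contributes $q\,q!\Hnorm{\cdot}^2$ to the right-hand side and $q!\Hnorm{\cdot}^2$ to the left), and then differentiate the inner product by the product rule. This yields $\boldsymbol{D}g_F=\boldsymbol{D}^2F\otimes_1(-\boldsymbol{D}L^{-1}F)+(-\boldsymbol{D}^2L^{-1}F)\otimes_1\boldsymbol{D}F$, where each summand is the image in $\mathcal H$ of a vector under the symmetric Hilbert--Schmidt operator $\boldsymbol{D}^2F$ (resp.\ $\boldsymbol{D}^2L^{-1}F$). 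Using $(a+b)^2\le2a^2+2b^2$ and $\Hnorm{A\otimes_1 v}\le\opnorm{A}\Hnorm{v}$ gives $\Hnorm{\boldsymbol{D}g_F}^2\le2\opnorm{\boldsymbol{D}^2F}^2\Hnorm{\boldsymbol{D}L^{-1}F}^2+2\opnorm{\boldsymbol{D}^2L^{-1}F}^2\Hnorm{\boldsymbol{D}F}^2$.

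The key technical step is to push the $L^{-1}$-derivatives back onto $F$. Starting from $-L^{-1}F=\int_0^\infty T_tF\,dt$ (valid for centered $F$) and the commutation relation $\boldsymbol{D}T_t=e^{-t}T_t\boldsymbol{D}$ (both checked chaos-by-chaos), I would write $-\boldsymbol{D}L^{-1}F=\int_0^\infty e^{-t}T_t\boldsymbol{D}F\,dt$ and $-\boldsymbol{D}^2L^{-1}F=\int_0^\infty e^{-2t}T_t\boldsymbol{D}^2F\,dt$. Because the Mehler formula exhibits $T_t$ as a conditional expectation, it is an $L^p$-contraction for the relevant $\mathcal H$- and operator-valued norms, so Jensen applied to the probability measures proportional to $e^{-t}dt$ and $e^{-2t}dt$ yields $E[\Hnorm{\boldsymbol{D}L^{-1}F}^4]\le E[\Hnorm{\boldsymbol{D}F}^4]$ and $E[\opnorm{\boldsymbol{D}^2L^{-1}F}^4]\le\tfrac{1}{16}E[\opnorm{\boldsymbol{D}^2F}^4]$. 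Feeding these into the previous display via Cauchy--Schwarz produces $E[\Hnorm{\boldsymbol{D}g_F}^2]\le(2+\tfrac12)E[\opnorm{\boldsymbol{D}^2F}^4]^{1/2}E[\Hnorm{\boldsymbol{D}F}^4]^{1/2}=\tfrac52E[\opnorm{\boldsymbol{D}^2F}^4]^{1/2}E[\Hnorm{\boldsymbol{D}F}^4]^{1/2}$. Finally, diagonalizing the symmetric operator $\boldsymbol{D}^2F$ gives $\opnorm{\boldsymbol{D}^2F}^2\le\HHnorm{\boldsymbol{D}^2F\otimes_1\boldsymbol{D}^2F}$, whence $\sqrt{\text{Var}(g_F)}\le\sqrt{5/2}\,E[\HHnorm{\boldsymbol{D}^2F\otimes_1\boldsymbol{D}^2F}^2]^{1/4}E[\Hnorm{\boldsymbol{D}F}^4]^{1/4}$ with $\sqrt{5/2}=\sqrt{10}/2$; multiplying by $c_W$ and $c_{TV}$ delivers the two displayed inequalities.

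The step I expect to be the main obstacle is the family of $L^{-1}$-derivative estimates: making the integral representations and the commutation $\boldsymbol{D}T_t=e^{-t}T_t\boldsymbol{D}$ rigorous on $\mathbb{D}^{2,4}$, verifying that the Mehler semigroup really contracts the operator-valued $L^4$ norm of $\boldsymbol{D}^2F$, and tracking the constants $\int_0^\infty e^{-t}dt=1$ and $\int_0^\infty e^{-2t}dt=\tfrac12$ so that the prefactor lands exactly on $\sqrt{10}/2$. A secondary point is to confirm that $\boldsymbol{D}^2F$ may legitimately be treated as a symmetric Hilbert--Schmidt operator, which is what licenses both the bound $\Hnorm{A\otimes_1 v}\le\opnorm{A}\Hnorm{v}$ and the operator-to-contraction inequality $\opnorm{\boldsymbol{D}^2F}^2\le\HHnorm{\boldsymbol{D}^2F\otimes_1\boldsymbol{D}^2F}$.
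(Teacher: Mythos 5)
The paper itself contains no proof of this lemma: it is quoted as ``the main tool developed in \cite{Nourdin}'' and used as a black box, so the only thing to compare your proposal against is the original argument of Nourdin, Peccati and Reinert. Your proposal is precisely that argument, correctly reconstructed: the reduction to $\mu=0$; the Stein-equation bound $d(F,N)\leq c\,E\abs{\sigma^2-g_F}$ with $g_F=\Hprod{\boldsymbol{D}F}{-\boldsymbol{D}L^{-1}F}$, obtained from $F=\delta(-\boldsymbol{D}L^{-1}F)$ and the exact chain rule on Wiener space; Cauchy--Schwarz to pass to $\sqrt{\text{Var}(g_F)}$; the Poincar\'e inequality plus the product rule for $\boldsymbol{D}g_F$; and the semigroup estimates $E[\Hnorm{\boldsymbol{D}L^{-1}F}^4]\leq E[\Hnorm{\boldsymbol{D}F}^4]$ and $E[\opnorm{\boldsymbol{D}^2L^{-1}F}^4]\leq\frac{1}{16}E[\opnorm{\boldsymbol{D}^2F}^4]$, proved exactly as you indicate (the representation $-\boldsymbol{D}^kL^{-1}F=\int_0^\infty e^{-kt}T_t\boldsymbol{D}^kF\,dt$, Jensen with respect to the normalized measures, and the contractivity of $T_t$ coming from Mehler's formula). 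This is Proposition 3.1 of \cite{Nourdin}, i.e.\ the step you correctly single out as the crux; it is also exactly the step that is unavailable in the Wiener--Poisson setting and forces the present paper to restrict its Corollary \ref{mainqchaos} to a fixed chaos, where $-L^{-1}F=F/q$. The bookkeeping $2+2\cdot\frac{1}{4}=\frac{5}{2}$, $\sqrt{5/2}=\sqrt{10}/2$, and $\opnorm{\boldsymbol{D}^2F}^4\leq\HHnorm{\boldsymbol{D}^2F\otimes_1\boldsymbol{D}^2F}^2$ via the eigenvalues are all right.

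One point you should not have waved through: with your (correct) constants $c_W=1/\sigma$ and $c_{TV}=2/\sigma^2$, the final multiplication gives $d_W(F,N)\leq\frac{\sqrt{10}}{2\sigma}(\cdots)$ and $d_{TV}(F,N)\leq\frac{\sqrt{10}}{\sigma^2}(\cdots)$. The total-variation bound is exactly (\ref{TVdist}), but the Wasserstein bound is \emph{not} (\ref{Wdist}), which has $\sigma^2$ where your proof produces $\sigma$; your closing claim that the computation ``delivers the two displayed inequalities'' is therefore inaccurate as written. The fault lies with the display, not with your argument: (\ref{Wdist}) cannot hold as stated, since under the scaling $F\mapsto cF$, $N\mapsto cN$ its left-hand side scales like $c$ while its right-hand side is scale invariant (each of the two expectation factors gains a factor $c$ and $\sigma^2$ gains $c^2$), so validity for all $c$ would force $d_W(F,N)=0$. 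The correct constant is $\frac{\sqrt{10}}{2\sigma}$, which agrees with (\ref{Wdist}) only in the normalized case $\sigma=1$ --- and that is how the lemma is actually invoked in remarks \ref{quantrate} and \ref{quantrate2}, so nothing downstream in the paper is affected. A complete write-up should flag this correction explicitly, and should also record the two routine verifications you deferred: that $g_F\in\mathbb{D}^{1,2}$ whenever $F\in\mathbb{D}^{2,4}$ (needed to apply the Poincar\'e inequality to $g_F$), and that $\boldsymbol{D}^2F$ is a.s.\ a symmetric Hilbert--Schmidt operator on $\mathcal{H}$, which is what licenses both $\Hnorm{\boldsymbol{D}^2F\otimes_1 v}\leq\opnorm{\boldsymbol{D}^2F}\Hnorm{v}$ and the eigenvalue inequality above.
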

\begin{preremark}\label{conditions}
This lemma is quite useful since it essentially tells us that in order to obtain a CLT result for a family of random variables $F_T$, we just need to check three conditions:
\begin{enumerate}

\item {\bf Expectation of the First Derivative's Norm:}
\begin{align}\label{cond1}
E\bigl[\Hnorm{\boldsymbol{D}F_T}^4\bigr] = O(1)\ \text{ as }\ T\to\infty
\end{align}

\item {\bf Expectation of the Contraction's Norm:}
\begin{align}\label{cond2}
E\bigl[\HHnorm{\boldsymbol{D}^2F_T\otimes_1\boldsymbol{D}^2F_T}^2\bigr]\rightarrow0\ \text{ as } \ T\rightarrow\infty
\end{align}

\item {\bf Existence of the Variance}
\begin{align}\label{cond3}
\text{Var}[F_T]\rightarrow \Sigma^2\in(0,\infty)\ \text{ exists as }\ T\to\infty
\end{align}

\end{enumerate}
Due to the Gaussian Poincar$\acute{\text{e}}$ inequality, Var$[F_T]\leq E\bigl[\Hnorm{\boldsymbol{D}F_T}^2\bigr]\leq\sqrt{E\bigl[\Hnorm{\boldsymbol{D}F_T}^4\bigr]}$, so the variance will go to 0 if the expectation of the first Malliavin derivative's norm goes to 0. This is why condition (\ref{cond1}) is necessary, and the convergence to zero of the Wasserstein distance relies on condition (\ref{cond2}).
\end{preremark}

\subsection{Malliavin calculus on Wiener-Poisson space}

Let $\mathcal{H}=L^2_\mu$. Assume there is a complete probability space $(\Omega,\mathcal{F},\mathbb{P})$ where $L_t$ is a cadlag, centered, L$\acute{\text{e}}$vy process: $L_t$ has stationary and independent increments, is continuous in probability and $L_0=0$, with $E[L_1^2]<\infty$. At the risk of causing some confusion, is denoted by $\mathcal{F}$ the filtration generated by $L_t$ completed with the null sets of the above filtration, and work on the space $(\Omega,\mathcal{F},\mathbb{P})$. Assume this process is represented by the triplet $(0,\sigma^2,\nu)$, where $\nu$ is the L$\acute{\text{e}}$vy measure s.t. $d\mu(t,x)=\sigma^2dt\delta_0(x)+x^2dtd\nu(x)(1-\delta_0(x))$ and $\int_\RR x^2d\nu(x)<\infty$. This process can be represented as
$$L_t=\sigma W_t + \dint xd\widetilde{N}(t,x)$$
where $W_t$ is a standard Brownian motion, $\sigma\geq0$ and $\widetilde{N}$ is the compensated jump measure. A fuller exposition on L$\acute{\text{e}}$vy processes can be found in \cite{Applebaum} and \cite{Sato}. This process is extended to a random measure $M$, which is used to construct (in an analogous way to the It$\hat{\text{o}}$ integral construction) an integral on the step functions, and then by linearity and continuity it is extended to $L^2\bigl(([0,T]\times\RR)^q,\mathcal{B}([0,T]\times\RR)^q,\mu^{\otimes q}\bigr)$ and denoted by $I_q$. This integral satisfy the following properties:
\begin{enumerate}

\item $I_q(f)=I_q(\tilde{f})$

\item $I_q(af+bg)=aI_q(f)+bI_q(g)$ \hspace{1cm} ($a,b\in\RR$)

\item $E[I_p(f)I_q(g)]=q!\int_{(\RR^+\times\RR)^q}\tilde{f}\tilde{g}d\mu^{\otimes q}1_{\{q=p\}}$

\end{enumerate}
These properties are stated in \cite{Josep} and their proof can be found in \cite{Ito}. We have a product formula in this framework which is similar to the one in remark \ref{prodwiener} but with extra terms coming from the poisson integration part. A product formula for the pure jump framework can be found in \cite{Peccati}. Before stating the formula, is needed to define a general version of the contraction. Let $\phi\in L^2_{\mu^{\otimes k_1}}$ and $\psi\in L^2_{\mu^{\otimes k_2}}$ be symmetric functions. Then the general contraction of order $r\leq\min\{k_1,k_2\}$ and $s\leq\min\{k_1,k_2\}-r$ is given by the integration of the first $r$ variables and the ``sharing'' of the following $s$ variables, i.e., $\phi\otimes_r^s\psi=\prod_{i=1}^sz_{2i}\Lqprod{\phi(\cdot,z,x)}{\psi(\cdot,z,y)}{\mu^{\otimes r}}$, where $z\in(\RR^2)^s$ and $(x,y)\in(\RR^2)^{k_1-r-s}\times(\RR^2)^{k_2-r-s}$. Now the product formula can be stated as follows. If $\abs{f}\otimes_r^s\abs{g}\in L^2_{\mu^{p+q-2r-s}}$ for $0\leq r\leq \min\{p,q\}$ and $0\leq s\leq \min\{p,q\}-r$, then
\begin{align}\label{prodint}
I_p(f)I_q(g)=\sum_{r=0}^{p\wedge q}\sum_{s=0}^{p\wedge q-r}r!s! {p \choose r}{q \choose r}{p-r \choose s}{q-r \choose s}I_{p+q-2r-s}(f\otimes_r^s g)
\end{align}
The proof of this product formula can be found in \cite{Lee}.
\begin{preremark}
In the general contraction formula $z_{2i}$ is the size of the jump and $z_{2i-1}$ is the time when that jump occurs $\bigl(z=(z_1,z_2,\dots,z_{2s-1},z_{2s})\bigr)$. If we only have the Wiener part, the factor $z_{2i}$ would be zero unless $s=0$, and we then obtain the contraction defined in the Wiener space. Similarly, when the terms where $s\neq0$ are zero, the formula (\ref{prodint}) reduces to that in remark \ref{prodwiener}
\end{preremark}
It has been also proved by It$\hat{\text{o}}$ \cite[Theorem 2]{Ito} that for all $F\in L^2(\Omega):=L^2(\Omega,\mathcal{F},\mathbb{P})$, we have,
\begin{align}\label{chaosrep1}
F=\sum_{q=0}^\infty I_q(f_q),\hspace{1cm} f_q\in L^2_{\mu^{\otimes q}}:=L^2\bigl(([0,T]\times\RR)^q,\mathcal{B}([0,T]\times\RR)^q,\mu^{\otimes q}\bigr)
\end{align}
and that this representation is unique if the $f_q$'s are symmetric function. From this chaotic representation we can define the annihilation operators and creation operators, the former will be the Malliavin derivative and the latter will be the Skorohod integral. In this way define Dom$\boldsymbol{D}$ as the set of functionals $F\in L^2(\Omega)$ represented as in (\ref{chaosrep1}) such that $\sum_{q=1}^\infty qq!\Lqnorm{f_q}{\mu^{\otimes q}}^2<\infty$. For $F\in$Dom$\boldsymbol{D}$ the Malliavin derivative of F is the stochastic process given by 
\begin{align}\label{malliavinderivative2}
\boldsymbol{D}_zF=\sum_{q=0}^\infty qI_{q-1}(f_q(z,\cdot)),\hspace{1cm} z\in[0,T]\times\RR,\ f_q \text{ symmetric}
\end{align}
If we define the inner product as $\Lqprod{f}{g}{\mu}=\int_{\RR^+\times\RR}f(z)g(z) d\mu(z)$, then Dom$\boldsymbol{D}$ is a Hilbert space with the inner product $\left\langle F,G\right\rangle=E[FG]+E[\Lqprod{\boldsymbol{D}_zF}{\boldsymbol{D}_zG}{\mu}]$. We can embed Dom$\boldsymbol{D}$ in two spaces Dom$\boldsymbol{D}^0$ and Dom$\boldsymbol{D}^J$. Dom$\boldsymbol{D}^0$ is defined as the set of all functionals $F\in L^2(\Omega)$ with representation given as in (\ref{chaosrep1}) such that $\sum_{q=1}^\infty qq!\int_0^T\Lqnorm{f_q\bigl((t,0),\cdot\bigr)}{\mu^{\otimes(q-1)}}^2\hspace{-.8cm}dt<\infty$, while Dom$\boldsymbol{D}^J$ is defined as the respective functionals satisfying $\sum_{q=1}^\infty qq!\int_{[0,T]\times\RR_0}\Lqnorm{f_q(z,\cdot)}{\mu^{\otimes(q-1)}}^2\hspace{-.8cm}d\mu(z)<\infty$; hence Dom$\boldsymbol{D}$=Dom$\boldsymbol{D}^0\cap$Dom$\boldsymbol{D}^J$. We can now rewrite (due to the independency of $W$ and $\widetilde{N}$) $\Omega$ as the cross product $\Omega_W\times\Omega_{\widetilde{N}}$.
\begin{itemize}

\item {\bf Derivative $\boldsymbol{D}_{t,0}$}\\
This derivative can be interpreted as the derivative with respect to the Brownian motion part. Using the isometry $L^2(\Omega)\simeq L^2(\Omega_W;L^2(\Omega_{\widetilde{N}}))$, we can define a Malliavin derivative as we did in the Wiener case but using the $L^2(\Omega_{\widetilde{N}})$-valued smooth random variables $S_{\widetilde{N}}$, that is, for the functionals of the form $F=\sum_{i=1}^nG_iH_i$, where $G_i\in \mathcal{S}$ and $H_i\in L^2(\Omega_{\widetilde{N}})$. Then, this derivative will be $\boldsymbol{D}^WF=\sum_{i=1}^n(\boldsymbol{D}^WG_i)H_i$ and this $\boldsymbol{D}^WG_i$ is the derivative defined in (\ref{malliavinderivative}). This definition is extended (see \cite{Josep}) to a subspace Dom$\boldsymbol{D}^W\subset$Dom$\boldsymbol{D}^0$ and for $F\in$Dom$\boldsymbol{D}^W$,
\begin{align}\label{derivative0}
\boldsymbol{D}_{t,0}F=\frac{1}{\sigma}\boldsymbol{D}_t^WF
\end{align}
Furthermore, we also have a chain rule result for functionals of the form $F=f(G,H)\in L^2(\Omega)$ with $G\in$Dom$\boldsymbol{D}^W$, $H\in L^2(\Omega_{\widetilde{N}})$ and $f(x,y)$ continously differentiable in the variable $x$ with bounded partial derivative. We have that $F\in$Dom$\boldsymbol{D}^0$ and
\begin{align}\label{chainrule0}
\boldsymbol{D}_{t,0}F=\frac{1}{\sigma}\frac{\partial f}{\partial x}(G,H)\boldsymbol{D}_t^WG
\end{align}
This is also true (as in the Wiener space case) for functions which are a.e. differentiable but with the restriction that $G$ has an absolutely continuous law.

\item {\bf Derivative $\boldsymbol{D}_z$ ($z\neq(t,0)$)}\\
This derivate has been shown, in \cite{Josep0}, to be a difference operator. The idea is to introduce a jump of size $x$ at moment $t$. Then, the Malliavin derivative with $z=(t,x)$ will be the translation operator given by
$$\boldsymbol{D}_zF=\Psi_{t,x}F=\frac{F(\omega_{t,x})-F(\omega)}{x}$$
for any $F\in$Dom$\boldsymbol{D}^J$ such that $E\bigl[\dint_{[0,T]\times\RR_0}(\Psi_zF)^2d\mu(z)\bigr]<\infty$. See \cite{Josep0} for a complete contruction on the canonical space in which this is developed, and \cite{Josep} for a quick explanation on how to introduce a jump at moment $t$ and the conditions on the $\omega$'s. We also have a chain rule for this Malliavin derivative but using just the difference instead of the derivative, that is, for $F=f(G,H)\in L^2(\Omega)$ with $G\in L^2(\Omega_{\widetilde{W}})$, $H\in$Dom$\boldsymbol{D}^J$ and $f(x,y)$ continous it holds that,
$$\boldsymbol{D}_zF=\frac{f\bigl(G,H(\omega_{t,x})\bigr)-f\big(G,H(\omega)\bigr)}{x}=\frac{f\bigl(G,x\boldsymbol{D}_zH+H(\omega)\bigr)-f\big(G,H(\omega)\bigr)}{x}$$
Notice that if $f$ is differentiable, then we can use the mean value theorem to obtain
\begin{align}\label{chainrule1}
\boldsymbol{D}_zF=\frac{\partial f}{\partial y}\big(G,H(\omega)+\theta_z x\boldsymbol{D}_zH\bigr)\boldsymbol{D}_zH
\end{align}
for some $\theta_z\in(0,1)$

\end{itemize}

In the same way, consider the chaotic decomposition $F(z)=\sum_{q=0}^\infty I_q\bigl(f_q(z,\cdot)\bigr)$, with $f\in L^2_{\mu^{\otimes q}}$ symmetric with respect to the last $n$ variables. If $\sum_{q=0}^\infty (q+1)!\Lqnorm{f_q}{\mu^{\otimes(q+1)}}^2\hspace{-.7cm}<\infty$ then we say that $F\in$Dom$\delta$. Now we can define the Skorohod integral of $F\in$Dom$\delta$ by
\begin{align}\label{skorohod2}
\delta(F)=\sum_{q=0}^\infty I_{q+1}(\widetilde{f}_q)\in L^2(\Omega)
\end{align}
This operator is the adjoint of the operator $\boldsymbol{D}_z$, so $E[\delta(F)G]=E\bigl[\Lqprod{F(z)}{\boldsymbol{D}_zG}{\mu}\bigr]$ for all $G\in$Dom$\boldsymbol{D}$. Denote by $\mathbb{L}^{1,2}$ the set of elements $F\in L^2_{\mu\otimes\mathbb{P}}\bigl([0,T]\times\RR\times\Omega\bigr)$ such that $\sum_{q=1}^\infty qq!\Lqnorm{f_q}{\mu^{\otimes q}}^2\hspace{-.4cm}<\infty$. For all $F\in\mathbb{L}^{1,2}\subset$Dom$\delta$ we have that $F(z)\in$Dom$\boldsymbol{D}$, $\forall \ z \ \mu-a.e.$ and that $\boldsymbol{D}_\cdot F(\cdot)\in L^2_{\mu^{\otimes2}\times\mathbb{P}}\bigl(([0,T]\times\RR)^2\times\Omega\bigr)$.

\bigskip

Finally, the definitions of the Ornstein-Uhlenbeck semigroup $T_t$ and its infinitesimal generator $L$ are the same as in the Wiener space case. Basically, all we need to define it is the Malliavin derivative and the Skorohod integral, that is, we can just define $L=-\delta\boldsymbol{D}$. With this definition we obtain that for $F\in L^2(\Omega)$ with chaotic representation (\ref{chaosrep1}), $LF=\sum_{q=1}^\infty -qI_q(f_q)$ and $T_tF=\sum_{q=0}^\infty e^{-qt}I_q(f_q)$. Samely, the pseudo-inverse is given by $L^{-1}F=\sum_{q=1}^\infty\frac{-1}{q}I_q(f_q)$ and $LL^{-1}F=F-E[F].$

\section{Main theorems}

The first tool needed is the extension of the so-called {\em Gaussian Poincar$\acute{\text{e}}$ inequality} for the present context. But to prove this is required to have an inequality similar to the one proved in \cite[Proposition 3.1]{Nourdin} (was proved for all $p\geq2$ in the Wiener space case). The technique used in their proof was based on the equivalence between Mehler and Ornstein-Uhlenbeck semigroups for the Gaussian case, but in the Wiener-Poisson space we lack such an equivalence. Nevertheless, it is possible to prove it for $p=2$ and that is, in fact, the one needed to prove the extension of the Gaussian Poincar$\acute{\text{e}}$ inequality.
\begin{prop}\label{ineq1}
Let $F\in$Dom$\boldsymbol{D}$ such that $E[F]=0$. Then,
$$E\bigl[\Lqnorm{\boldsymbol{D}L^{-1}F}{\mu}^2\bigr]\leq E\bigl[\Lqnorm{\boldsymbol{D}F}{\mu}^2\bigr]$$
\end{prop}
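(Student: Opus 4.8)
The plan is to use the chaos decompositions of $F$, $L^{-1}F$, and their Malliavin derivatives, and to compare the two $L^2_\mu$-norms term by term in the Wiener chaos expansion. Since the inequality involves only the first Malliavin derivative and the operator $L^{-1}$, both of which act diagonally on the chaos decomposition, the natural approach is to expand everything explicitly and exploit the orthogonality property $E[I_p(f)I_q(g)] = q!\,\Lqprod{\tilde f}{\tilde g}{\mu^{\otimes q}}\,1_{\{q=p\}}$ stated among the integral properties and in Remark~\ref{chaosintegral}.

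Concretely, I would first write $F = \sum_{q=1}^\infty I_q(f_q)$ (the $q=0$ term vanishes since $E[F]=0$), so that by the definition of the pseudo-inverse $L^{-1}F = \sum_{q=1}^\infty \frac{-1}{q} I_q(f_q)$. Applying the Malliavin derivative formula \eqref{malliavinderivative2} gives
\begin{align*}
\boldsymbol{D}_z F &= \sum_{q=1}^\infty q\, I_{q-1}\bigl(f_q(z,\cdot)\bigr), \\
\boldsymbol{D}_z L^{-1}F &= \sum_{q=1}^\infty (-1)\, I_{q-1}\bigl(f_q(z,\cdot)\bigr).
\end{align*}
The key structural observation is that the $q$-th chaos term of $\boldsymbol{D}_z L^{-1}F$ is exactly $1/q$ times the corresponding term of $\boldsymbol{D}_z F$. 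Next I would compute $E\bigl[\Lqnorm{\boldsymbol{D}F}{\mu}^2\bigr]$ by integrating $E\bigl[(\boldsymbol{D}_z F)^2\bigr]$ over $z$ against $d\mu$, using orthogonality of distinct chaoses to kill all cross terms, so that only the diagonal $p=q$ contributions survive; this yields a sum of the form $\sum_{q=1}^\infty q^2 \cdot (q-1)! \,\Lqnorm{f_q}{\mu^{\otimes q}}^2$ (up to the precise symmetrization bookkeeping). The same computation for $\boldsymbol{D}_z L^{-1}F$ replaces each factor $q^2$ by $1$, giving $\sum_{q=1}^\infty (q-1)!\,\Lqnorm{f_q}{\mu^{\otimes q}}^2$.

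Comparing the two series term by term, the inequality reduces to the elementary fact that $1 \leq q^2$ for every $q \geq 1$, so each summand on the left is dominated by the corresponding summand on the right, and summing gives the claim. I expect the main obstacle to be a purely bookkeeping one: getting the symmetrization and the combinatorial constants exactly right when turning $E\bigl[(\boldsymbol{D}_z F)^2\bigr]$ into a clean sum over chaoses, since $f_q(z,\cdot)$ is symmetric only in its remaining variables and one must integrate the first variable $z$ against $d\mu$ correctly. The genuinely conceptual point, however, is simply that $L^{-1}$ contributes a factor $1/q$ on the $q$-th chaos whereas $\boldsymbol{D}$ contributes a factor $q$, and $1/q \le q$; no analogue of the Mehler-semigroup equivalence (which fails in this space) is needed for this $p=2$ estimate, which is precisely why it goes through here.
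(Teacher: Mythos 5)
Your proposal is correct and follows essentially the same route as the paper: both expand $F$ in its chaos decomposition, use orthogonality of the chaoses to reduce $E\bigl[\Lqnorm{\boldsymbol{D}L^{-1}F}{\mu}^2\bigr]$ to a sum in which the $q^{\text{th}}$ term carries a factor $\frac{1}{q^2}$ relative to the corresponding term of $E\bigl[\Lqnorm{\boldsymbol{D}F}{\mu}^2\bigr]$, and conclude by the termwise bound $\frac{1}{q^2}\leq1$. The paper's proof is just a more compact version of this, keeping the terms as $\Lqnorm{\boldsymbol{D}I_q(f_q)}{\mu}^2$ rather than writing out the explicit combinatorial constants $q\cdot q!\,\Lqnorm{f_q}{\mu^{\otimes q}}^2$.
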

\begin{proof}
Assume $F$ has its chaos decomposition given by (\ref{chaosrep1}). By the orthogonality between chaoses we get,
\begin{align*}
\hspace{-1cm} E\bigl[\Lqnorm{\boldsymbol{D}L^{-1}F}{\mu}^2\bigr]=&E\biggl[\Lqnorm{\sum_{q=1}^\infty\boldsymbol{D}\frac{1}{q}I_q(f_q)}{\mu}^2\biggr]=E\biggl[\sum_{q=1}^\infty\frac{1}{q^2}\Lqnorm{\boldsymbol{D}I_q(f_q)}{\mu}^2\biggr]\leq E\biggl[\sum_{q=1}^\infty\Lqnorm{\boldsymbol{D}I_q(f_q)}{\mu}^2\biggr] =E\bigl[\Lqnorm{\boldsymbol{D}F}{\mu}^2\bigr]
\end{align*}
\end{proof}

\begin{theorem}\label{poincareineq}
(Extension of the Gaussian Poincar$\acute{\text{e}}$ inequality)\\
Let $F\in$Dom$\boldsymbol{D}$. Then,
\begin{align}\label{poincareineq2}
\text{Var}[F]\leq E\big[\Lqnorm{\boldsymbol{D}F}{\mu}^2\bigr]
\end{align}
with equality if and only if $F$ is a linear combination of elements in the first and $0^{\text{th}}$ chaos.
\end{theorem}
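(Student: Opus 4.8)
The plan is to reduce the whole statement to the chaos expansion and compare two series term by term. First I would write $F=\sum_{q=0}^\infty I_q(f_q)$ with symmetric kernels $f_q\in L^2_{\mu^{\otimes q}}$ and $f_0=E[F]$. By the isometry property (property 3 of the integrals $I_q$), chaoses of different order are orthogonal, so the cross terms drop and the centered second moment becomes $\text{Var}[F]=E\bigl[(\sum_{q\geq1}I_q(f_q))^2\bigr]=\sum_{q=1}^\infty q!\,\Lqnorm{f_q}{\mu^{\otimes q}}^2$.

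Next I would compute the right-hand side from the derivative formula (\ref{malliavinderivative2}), $\boldsymbol{D}_zF=\sum_{q=1}^\infty qI_{q-1}(f_q(z,\cdot))$, noting that $f_q(z,\cdot)$ is symmetric in its remaining $q-1$ arguments. Squaring and again invoking orthogonality of the distinct-order kernels in $L^2(\Omega)$ gives, for $\mu$-a.e. $z$, the pointwise identity $E[(\boldsymbol{D}_zF)^2]=\sum_{q=1}^\infty q^2(q-1)!\,\Lqnorm{f_q(z,\cdot)}{\mu^{\otimes(q-1)}}^2$. Integrating in $z$ against $\mu$, interchanging the nonnegative sum and integral, and using $\int\Lqnorm{f_q(z,\cdot)}{\mu^{\otimes(q-1)}}^2\,d\mu(z)=\Lqnorm{f_q}{\mu^{\otimes q}}^2$ together with $q^2(q-1)!=q\,q!$, I obtain $E\bigl[\Lqnorm{\boldsymbol{D}F}{\mu}^2\bigr]=\sum_{q=1}^\infty q\,q!\,\Lqnorm{f_q}{\mu^{\otimes q}}^2$.

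Subtracting the two identities then yields $E\bigl[\Lqnorm{\boldsymbol{D}F}{\mu}^2\bigr]-\text{Var}[F]=\sum_{q=1}^\infty(q-1)q!\,\Lqnorm{f_q}{\mu^{\otimes q}}^2\geq0$, since every coefficient $(q-1)q!$ is nonnegative for $q\geq1$. This gives (\ref{poincareineq2}). For the equality case, the sum vanishes precisely when $\Lqnorm{f_q}{\mu^{\otimes q}}^2=0$ for all $q\geq2$ (the $q=1$ term carrying coefficient $0$), i.e. when $F=f_0+I_1(f_1)$ lives on the zeroth and first chaos, which is exactly the claimed characterization.

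The argument is essentially forced once the isometry is invoked, so I do not expect a genuine obstacle; the only points demanding care are the term-by-term orthogonality for the derivative and the Tonelli interchange between the $z$-integration and the chaos sum. Both are harmless because all summands are nonnegative, but they are the steps where one must check that $F\in$Dom$\boldsymbol{D}$ genuinely guarantees finiteness of $\sum_q q\,q!\,\Lqnorm{f_q}{\mu^{\otimes q}}^2$, which is precisely the defining summability condition for Dom$\boldsymbol{D}$.
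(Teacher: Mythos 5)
Your proof is correct, but it takes a genuinely different route from the paper's. The paper does not expand both sides in chaos and compare coefficients; instead it assumes $E[F]=0$, uses the identity $F=\delta\boldsymbol{D}L^{-1}F$ and the duality between $\delta$ and $\boldsymbol{D}$ to write $\text{Var}[F]=E\bigl[\Lqprod{\boldsymbol{D}F}{\boldsymbol{D}L^{-1}F}{\mu}\bigr]$, applies Cauchy--Schwarz, and then absorbs the factor $E\bigl[\Lqnorm{\boldsymbol{D}L^{-1}F}{\mu}^2\bigr]^{\frac{1}{2}}$ via Proposition \ref{ineq1} (itself a one-line chaos estimate based on $1/q^2\leq1$). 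Your term-by-term computation instead produces the exact identity
\begin{align*}
E\bigl[\Lqnorm{\boldsymbol{D}F}{\mu}^2\bigr]-\text{Var}[F]=\sum_{q=1}^\infty(q-1)\,q!\,\Lqnorm{f_q}{\mu^{\otimes q}}^2,
\end{align*}
which is stronger in one concrete respect: it immediately yields the ``equality if and only if'' clause of the statement, whereas the paper's Cauchy--Schwarz argument, as written, proves only the inequality and never addresses the equality characterization (one would have to trace equality through both Cauchy--Schwarz and Proposition \ref{ineq1} to recover it). What the paper's route buys in exchange is structural: the chain $F=\delta\boldsymbol{D}L^{-1}F$, integration by parts, and the comparison of $\boldsymbol{D}L^{-1}F$ with $\boldsymbol{D}F$ are exactly the ingredients reused in Theorem \ref{main} and Corollary \ref{mainqchaos}, so the Poincar\'e inequality appears there as an instance of the same machinery rather than as a separate computation. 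Your two flagged care points are handled correctly: all summands are nonnegative, so Tonelli applies, and membership in Dom$\boldsymbol{D}$ is by definition the summability $\sum_{q\geq1} q\,q!\,\Lqnorm{f_q}{\mu^{\otimes q}}^2<\infty$, which makes both series finite.
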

\begin{proof}
Assume, without loss of generality, that $E[F]=0$.
\begin{align*}
\text{Var}[F]=E[F^2]=E\bigl[\Lqprod{\boldsymbol{D}F}{\boldsymbol{D}L^{-1}F}{\mu}\bigr]\leq E\bigl[\Lqnorm{\boldsymbol{D}F}{\mu}^2\bigr]^{\frac{1}{2}}E\bigl[\Lqnorm{\boldsymbol{D}L^{-1}F}{\mu}^2\bigr]^{\frac{1}{2}}\leq E\bigl[\Lqnorm{\boldsymbol{D}F}{\mu}^2\bigr]
\end{align*}
where Proposition \ref{ineq1} was used in the last step, and the fact that $F=\delta\boldsymbol{D}L^{-1}F$ in the second step.
\end{proof}

Notice is possible to combine formulas (\ref{chainrule0}) and a version of (\ref{chainrule1}) to write the chain rule in a unique way when $f$ is a function of just one variable which is $k$ times continously differentiable. Similarly, is possible to give a unified formula for the derivative of a product. Using the fact that the jump $x$ is zero for $\boldsymbol{D}_{t,0}$, the following unified chain and product rules are obtained,
\begin{prop}\label{chainrule2th}(Chain rules and Product rule)\\
Let $F, G, H\in$Dom$\boldsymbol{D}^W\cap$Dom$\boldsymbol{D}^J$ such that $\boldsymbol{D}F, \boldsymbol{D}G, \boldsymbol{D}H \in L^2_{\mu}$. Also consider $f,g\in\mathcal{C}^{k-1}$, both with bounded first derivatives ($f'$ and $g'$ can be unbounded if the the law of $F$ is absolutely continuous with respect to the Lebesgue measure), such that $f$ is $k$-times differentiable and $g^{(k-1)}$ is a.e. differentiable. Then,
\begin{align}\label{chainrule2}
\boldsymbol{D}_zf(F) = \sum_{n=1}^{k-1}\frac{f^{(n)}(F)}{n!}x^{n-1}(\boldsymbol{D}_zF)^n+\frac{f^{(k)}(F+\theta_zx\boldsymbol{D}_zF)}{k!}x^{k-1}(\boldsymbol{D}_zF)^k
\end{align}
for some function $\theta_z\in(0,1)$ for all $z=(t,x)\in\RR^+\times\RR$,
\begin{align}\label{chainrule3}
\boldsymbol{D}_zg(F) = \sum_{n=1}^{k-1}\frac{g^{(n)}(F)}{n!}x^{n-1}(\boldsymbol{D}_zF)^n+\int_0^{\boldsymbol{D}_zF}\frac{g^{(k)}(F+xu)}{(k-1)!}x^{k-1}(\boldsymbol{D}_zF-u)^{k-1}du
\end{align}
and
\begin{align}\label{productrule2}
\boldsymbol{D}_z(GH)=\boldsymbol{D}_zG\cdot H+G\cdot\boldsymbol{D}_zH+x\cdot\boldsymbol{D}_zG\cdot\boldsymbol{D}_zH
\end{align}
for all $z=(t,x)\in\RR^+\times\RR$.
\end{prop}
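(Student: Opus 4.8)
The plan is to establish all three identities by exploiting the fact that $\boldsymbol{D}_z$ is defined by two genuinely different mechanisms according to whether $z=(t,0)$ (the Wiener direction, where the jump size $x$ vanishes) or $z=(t,x)$ with $x\neq 0$ (the Poisson direction). In the Poisson direction the single fact I will use repeatedly is the defining relation of the difference operator from (\ref{chainrule1}), which can be rewritten as $F(\omega_{t,x})=F(\omega)+x\,\boldsymbol{D}_zF$; in the Wiener direction I will instead invoke the chain rule (\ref{chainrule0}) together with the ordinary Leibniz rule for $\boldsymbol{D}^W$. The whole point of the statement is that both mechanisms can be packaged into one formula, so the argument is essentially a case analysis followed by a check that the two cases agree with the asserted expressions.

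I would start with the product rule (\ref{productrule2}), which is the cleanest. In the Poisson direction, writing $G(\omega_{t,x})=G+x\boldsymbol{D}_zG$ and $H(\omega_{t,x})=H+x\boldsymbol{D}_zH$ and multiplying, the product $(GH)(\omega_{t,x})$ expands into $GH+x(G\boldsymbol{D}_zH+H\boldsymbol{D}_zG)+x^2\boldsymbol{D}_zG\,\boldsymbol{D}_zH$; subtracting $GH$ and dividing by $x$ yields exactly $\boldsymbol{D}_zG\cdot H+G\cdot\boldsymbol{D}_zH+x\,\boldsymbol{D}_zG\,\boldsymbol{D}_zH$. In the Wiener direction $x=0$ and the cross term disappears, leaving the usual Leibniz rule $\boldsymbol{D}_{t,0}(GH)=\boldsymbol{D}_{t,0}G\cdot H+G\cdot\boldsymbol{D}_{t,0}H$, which holds for $\boldsymbol{D}^W$ on smooth variables by (\ref{malliavinderivative}) and extends to the stated class by density.

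For the two chain rules I would again work in the Poisson direction first, where $\boldsymbol{D}_zf(F)=\frac{1}{x}\bigl(f(F+x\boldsymbol{D}_zF)-f(F)\bigr)$. Applying Taylor's theorem to $f$ about $F$ with increment $x\boldsymbol{D}_zF$ in Lagrange form gives, after subtracting $f(F)$ and dividing by $x$, precisely (\ref{chainrule2}) with the Lagrange point written as $F+\theta_zx\boldsymbol{D}_zF$; here the hypothesis that $f$ is $k$-times differentiable is exactly what the Lagrange remainder requires. For (\ref{chainrule3}) I would instead use the integral form of the remainder, $\int_0^{x\boldsymbol{D}_zF}\frac{g^{(k)}(F+v)}{(k-1)!}(x\boldsymbol{D}_zF-v)^{k-1}\,dv$, and perform the substitution $v=xu$; after accounting for the Jacobian $dv=x\,du$ and the overall division by $x$, the net power of $x$ is $x^{k-1}$ and the remainder becomes the asserted $\int_0^{\boldsymbol{D}_zF}\frac{g^{(k)}(F+xu)}{(k-1)!}x^{k-1}(\boldsymbol{D}_zF-u)^{k-1}\,du$. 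The integral form is what permits the weaker requirement that only $g^{(k-1)}$ be a.e. differentiable.

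The main obstacle is not any single computation but the unification step: one must verify that the Poisson-direction formulas collapse to the correct Wiener-direction formulas when $x=0$. In (\ref{chainrule2}) and (\ref{chainrule3}) the factor $x^{n-1}$ kills every term with $n\geq 2$ (since $0^{n-1}=0$) and the remainder factor $x^{k-1}$ kills the remainder for $k\geq 2$, leaving only $f^{(1)}(F)\boldsymbol{D}_zF$, which matches (\ref{chainrule0}); the case $k=1$ must be checked separately, since there the sum is empty and the remainder itself supplies the first-order term. Alongside this I would have to confirm, under the stated hypotheses ($F,G,H\in$Dom$\boldsymbol{D}^W\cap$Dom$\boldsymbol{D}^J$ with derivatives in $L^2_\mu$, together with boundedness of $f',g'$ or absolute continuity of the law of $F$), that the composite functionals $f(F)$, $g(F)$ and $GH$ again lie in Dom$\boldsymbol{D}^W\cap$Dom$\boldsymbol{D}^J$, so that each derivative is a bona fide element of $L^2_\mu$ and the manipulations above are legitimate.
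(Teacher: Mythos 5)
Your proof is correct and follows essentially the same route as the paper's: a case split between the Wiener direction $z=(t,0)$, where (\ref{chainrule0}) and the usual Leibniz rule apply, and the Poisson direction $z=(t,x)$ with $x\neq0$, where the difference-operator identity $F(\omega_{t,x})=F+x\boldsymbol{D}_zF$ is combined with Taylor's theorem in Lagrange form for (\ref{chainrule2}), in integral form with the substitution $v=xu$ for (\ref{chainrule3}), and with direct algebra for (\ref{productrule2}). Your additional checks (the $k=1$ case, the collapse of the unified formulas at $x=0$, and domain membership of $f(F)$, $g(F)$, $GH$) are slightly more careful than what the paper records, but the substance of the argument is identical.
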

\begin{proof}
If $z=(t,0)$, we get the chain rule formula (\ref{chainrule0}) and the usual product rule in Wiener space. If $z=(t,x)$ with $x\neq0$, then the Malliavin derivative formula tells us that $\boldsymbol{D}_zf(F)=\frac{f(F(\omega_{t,x}))-f(F(\omega))}{x}$. Since $f\in\mathcal{C}^{k-1}$ and $k$-times differentiable, $f(y)=f(y_0)+\sum_{n=1}^{k-1}\frac{f^{(n)}(y_0)}{n!}(y-y_0)^n+\frac{f^{(k)}(y_0+\theta_y(y-y_0))}{k!}(y-y_0)^k$ for some $\theta_y\in(0,1)$ (Taylor series of $f$ with mean-value form for the remainder). Using this expansion with $y=F(\omega_z)$, $y_0=F(\omega)$ and recalling that $y-y_0=F(\omega_z)-F(\omega)=x\boldsymbol{D}_zF$, the chain rule result (\ref{chainrule2}) will follows.

The second chain rule formula is obtained by using the Taylor expansion for $g$ with integral form for the remainder, i.e. $g(y)=g(y_0)+\sum_{n=1}^{k-1}\frac{g^{(n)}(y_0)}{n!}(y-y_0)^n+\int_{y_0}^y\frac{g^{(k)}(v)}{(k-1)!}(y-v)^{k-1}dv$. By using the values for $y$ and $y_0$ as in the previous case, and applying the change of variable $v=F+xu$ in the integral we get the chain rule formula (\ref{chainrule3}).

For the product rule we get trivially that

\bigskip

\noindent $\boldsymbol{D}_z(GH)=\frac{G(\omega_{t,x})H(\omega_{t,x})-G(\omega)H(\omega)}{x}=\frac{G(\omega_{t,x})-G(\omega)}{x}H(\omega)+G(\omega)\frac{H(\omega_{t,x})-H(\omega)}{x}+x\frac{G(\omega_{t,x})-G(\omega)}{x}\frac{H(\omega_{t,x})-H(\omega)}{x}$

\end{proof}

\subsection{Upper bound and second order Poincar$\acute{\text{e}}$ inequality}

The main theorem of this paper is now stated. Recall the following bound on the Wasserstein distance\footnote{see \cite{Nourdin0} for further details on this bound},
$$d_W(F,N)\leq \sup_{f\in\mathscr{F}_W}\abs{E[f'(F)-Ff(F)]}$$
where $N\sim\mathcal{N}(0,1)$ and $\mathscr{F}_W:=\{f\in\mathcal{C}^1\ /\ f' \text{ is Lipschitz},\ \Linfnorm{f'}\leq1, \Linfnorm{f''}\leq2\}$\footnote{ The class $\mathscr{F}_W$ is the collection of all continously differentiable functions $f:R\rightarrow R$ that has derivative bounded by 1 and such that there exists a version of $f''$ that is bounded by 2}.

\begin{theorem}\label{main}(Upper Bound)\\
Let $N\sim\mathcal{N}(0,1)$ and let $F\in$Dom$\boldsymbol{D}^W\cap$Dom$\boldsymbol{D}^J$ be such that $E[F]=0$. Then,
\begin{align}\label{mainineq1}
d_W(F,N)\leq E\abs{1-\Lqprod{\boldsymbol{D}F}{-\boldsymbol{D}L^{-1}F}{\mu}}+E\bigl[\Lqprod{\abs{x(\boldsymbol{D}F)^2}}{\abs{\boldsymbol{D}L^{-1}F}}{\mu}\bigr]
\end{align}
\end{theorem}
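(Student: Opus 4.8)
The plan is to follow the Stein's-method-plus-Malliavin-calculus strategy pioneered in \cite{Nourdin0}, starting from the quoted bound $d_W(F,N)\leq \sup_{f\in\mathscr{F}_W}\abs{E[f'(F)-Ff(F)]}$ and estimating the quantity $E[f'(F)-Ff(F)]$ for a fixed $f\in\mathscr{F}_W$. The first step is to rewrite the troublesome term $E[Ff(F)]$. Since $E[F]=0$ and $LL^{-1}F=F-E[F]=F$, I would write $F=LL^{-1}F=-\delta\boldsymbol{D}L^{-1}F$, so that
\begin{align*}
E[Ff(F)]=-E[f(F)\,\delta\boldsymbol{D}L^{-1}F]=-E\bigl[\Lqprod{\boldsymbol{D}f(F)}{\boldsymbol{D}L^{-1}F}{\mu}\bigr],
\end{align*}
using the adjoint (integration-by-parts) relation $E[\delta(u)G]=E[\Lqprod{u}{\boldsymbol{D}G}{\mu}]$ with $G=f(F)$.

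The second, and \emph{crucial}, step is to expand $\boldsymbol{D}_z f(F)$ via the unified chain rule of Proposition~\ref{chainrule2th}. Taking $k=1$ in formula~(\ref{chainrule2}) gives the \emph{first-order} Taylor expansion with mean-value remainder, namely
\begin{align*}
\boldsymbol{D}_z f(F)=f'\!\bigl(F+\theta_z x\boldsymbol{D}_zF\bigr)\,\boldsymbol{D}_zF,
\end{align*}
for some $\theta_z\in(0,1)$. This is exactly the point where the jump size $x$ enters and produces the extra second term in the bound. I would then split $f'(F+\theta_z x\boldsymbol{D}_zF)=f'(F)+\bigl[f'(F+\theta_z x\boldsymbol{D}_zF)-f'(F)\bigr]$. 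Plugging this back, the $f'(F)$ piece combines with the $E[f'(F)]$ term to yield $E\bigl[f'(F)\bigl(1-\Lqprod{\boldsymbol{D}F}{-\boldsymbol{D}L^{-1}F}{\mu}\bigr)\bigr]$, whose absolute value is bounded by $E\abs{1-\Lqprod{\boldsymbol{D}F}{-\boldsymbol{D}L^{-1}F}{\mu}}$ because $\Linfnorm{f'}\leq1$. The remainder piece involves the difference $f'(F+\theta_z x\boldsymbol{D}_zF)-f'(F)$, which by the Lipschitz property $\Linfnorm{f''}\leq 2$ (more precisely $\abs{f'(a)-f'(b)}\leq \abs{a-b}$ since $\Linfnorm{f''}\le 2$ controls the modulus, or directly bounding by the mean value) is controlled by $\abs{\theta_z x\boldsymbol{D}_zF}\leq\abs{x\boldsymbol{D}_zF}$, and this is integrated against $\abs{\boldsymbol{D}_zL^{-1}F}$ to produce the jump term $E\bigl[\Lqprod{\abs{x(\boldsymbol{D}F)^2}}{\abs{\boldsymbol{D}L^{-1}F}}{\mu}\bigr]$.

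The \textbf{main obstacle} I anticipate is handling the mean-value remainder cleanly in the Wiener-Poisson setting: for the Wiener direction $z=(t,0)$ the jump size $x=0$, so the remainder term vanishes identically there and one recovers exactly the classical Wiener computation, whereas for the Poisson direction $z=(t,x)$ with $x\neq0$ the derivative is a genuine difference operator and the factor $x$ survives. The unified chain rule~(\ref{chainrule2}) is precisely what reconciles these two cases in a single expression, so the delicate bookkeeping is making sure the Lipschitz bound on $f'$ is applied consistently across both directions and that the resulting remainder is exactly $\abs{x}(\boldsymbol{D}_zF)^2\abs{\boldsymbol{D}_zL^{-1}F}$ after using $\abs{\boldsymbol{D}_zf(F)}\leq\abs{\boldsymbol{D}_zF}$ in the leftover factor. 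Finally, assembling the two estimates via the triangle inequality and taking the supremum over $f\in\mathscr{F}_W$ yields~(\ref{mainineq1}); I would also double-check the integrability that justifies the integration by parts, which is guaranteed by $F\in$Dom$\boldsymbol{D}^W\cap$Dom$\boldsymbol{D}^J$.
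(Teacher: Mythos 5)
Your overall architecture is the same as the paper's: the Stein bound over $\mathscr{F}_W$, the identity $F=LL^{-1}F=-\delta\boldsymbol{D}L^{-1}F$, integration by parts to get $E[Ff(F)]=E\bigl[\Lqprod{\boldsymbol{D}f(F)}{-\boldsymbol{D}L^{-1}F}{\mu}\bigr]$, and a Taylor-type expansion of $\boldsymbol{D}_zf(F)$ split into the leading term $f'(F)\boldsymbol{D}_zF$ plus a remainder. However, your treatment of the remainder has a genuine gap. You expand via the first-order mean-value chain rule ($k=1$ in (\ref{chainrule2})), $\boldsymbol{D}_zf(F)=f'(F+\theta_zx\boldsymbol{D}_zF)\boldsymbol{D}_zF$, and then control $\abs{f'(F+\theta_zx\boldsymbol{D}_zF)-f'(F)}$ by the Lipschitz constant of $f'$. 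For $f\in\mathscr{F}_W$ that Lipschitz constant is $\Linfnorm{f''}\leq2$, not $1$: your parenthetical claim ``$\abs{f'(a)-f'(b)}\leq\abs{a-b}$ since $\Linfnorm{f''}\le2$ controls the modulus'' is false, and this is exactly where the error hides. Since the mean value theorem gives no control on $\theta_z$ beyond $\theta_z<1$, the best your route yields is
\begin{align*}
d_W(F,N)\leq E\abs{1-\Lqprod{\boldsymbol{D}F}{-\boldsymbol{D}L^{-1}F}{\mu}}+2\,E\bigl[\Lqprod{\abs{x(\boldsymbol{D}F)^2}}{\abs{\boldsymbol{D}L^{-1}F}}{\mu}\bigr],
\end{align*}
i.e.\ the stated inequality with an extra factor $2$ on the jump term, which is strictly weaker than Theorem \ref{main}.

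The paper avoids this loss by using the second chain rule (\ref{chainrule3}) with $k=2$ (Taylor expansion with \emph{integral} remainder): $\boldsymbol{D}f(F)=f'(F)\boldsymbol{D}F+\int_0^{\boldsymbol{D}F}f''(F+xu)\,x\,(\boldsymbol{D}F-u)\,du$, so the remainder is bounded by $\Linfnorm{f''}\abs{x}\cdot\frac{(\boldsymbol{D}F)^2}{2}\leq\abs{x}(\boldsymbol{D}F)^2$; the factor $\frac{1}{2}$ coming from $\int_0^a(a-u)\,du=\frac{a^2}{2}$ exactly cancels the $2$ in $\Linfnorm{f''}\leq2$. Note that (\ref{chainrule3}) is also the appropriate tool for a second reason: members of $\mathscr{F}_W$ only have $f'$ Lipschitz, so $f''$ exists merely almost everywhere, which is all the integral-form expansion requires, whereas higher-order mean-value expansions would demand everywhere-differentiability. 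Your argument is repaired simply by replacing your splitting with this $k=2$ integral-remainder expansion; the rest of your proposal (the integration by parts, the bound on the $f'(F)$ term by $\Linfnorm{f'}\leq1$, and the observation that the remainder vanishes in the Wiener direction $x=0$) then goes through as written.
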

\begin{proof}
By Proposition \ref{chainrule2th} (with $k=2$ in (\ref{chainrule3})) we get $\boldsymbol{D}f(F)=f'(F)\boldsymbol{D}F+\int_0^{\boldsymbol{D}F}f''(F+xu)x(\boldsymbol{D}F-u)du$. On the other hand, using the identity $F=-\delta\boldsymbol{D}L^{-1}F$ (recall $E[F]=0$) and the integration by parts formula we get $E[Ff(F)]=E\bigl[\Lqprod{\boldsymbol{D}f(F)}{-\boldsymbol{D}L^{-1}F}{\mu}\bigr]$. Putting them together we get
\begin{align*}
\abs{E[f'(F)-Ff(F)]}&=\abs{E[f'(F)]-E\bigl[\Lqprod{\boldsymbol{D}f(F)}{-\boldsymbol{D}L^{-1}F}{\mu}\bigr]}\\
&=\abs{E[f'(F)]-E\biggl[f'(F)\Lqprod{\boldsymbol{D}F}{-\boldsymbol{D}L^{-1}F}{\mu}+\Lqprod{\int_0^{\boldsymbol{D}F}f''(F+xu)x(\boldsymbol{D}F-u)du}{-\boldsymbol{D}L^{-1}F}{\mu}\biggr]}\\
&\leq E\biggl[\abs{f'(F)}\abs{1-\Lqprod{\boldsymbol{D}F}{-\boldsymbol{D}L^{-1}F}{\mu}}\biggr]+E\biggl[\Lqprod{\int_0^{\boldsymbol{D}F}\abs{f''(F+xu)}\abs{x(\boldsymbol{D}F-u)}du}{\abs{\boldsymbol{D}L^{-1}F}}{\mu}\biggr]\\
&\leq \Linfnorm{f'}E\biggl[\abs{1-\Lqprod{\boldsymbol{D}F}{-\boldsymbol{D}L^{-1}F}{\mu}}\biggr]+E\biggl[\Lqprod{\Linfnorm{f''}\frac{\abs{x(\boldsymbol{D}_zF)^2}}{2}}{\abs{\boldsymbol{D}L^{-1}F}}{\mu}\biggr]
\end{align*}
Finally, use the fact that $\Linfnorm{f'}\leq1$ and $\Linfnorm{f''}\leq2$ to obtain the result.
\end{proof}
Is desirable to use this result to obtain a nice upper bound, as in the Wiener space with Lemma \ref{nourdinineq}. The main problem faced is that equivalence between the O-U and Mehler semigroups is no longer available, so the proofs of $E[\Lqnorm{\boldsymbol{D}^2L^{-1}F}{\mu}^p]\leq\frac{1}{2^p}E[\Lqnorm{\boldsymbol{D}^2F}{\mu}^p]$ and $E[\opnorm{\boldsymbol{D}^2L^{-1}F}^p]\leq\frac{1}{2^p}E[\opnorm{\boldsymbol{D}^2F}^p]$ given in the Wiener space case fail for the Wiener-Poisson space case. Nevertheless, is still possible to state an equivalent version of Lemma \ref{nourdinineq} for the case when $F$ lies in one specific chaos.
\begin{corollary}\label{mainqchaos}(Second order Poincar$\acute{\text{e}}$ inequality)\\
Fix $q\in\NN$ and let $F=I_q(f)$ with $E[F]=\mu$ and Var$[F]=\sigma^2$. Assume that $N\sim\mathcal{N}(\mu,\sigma^2)$, then
\begin{align}\label{ineqqchaos}
d_W(F,N)&\leq \frac{\sqrt{2}}{q\sigma^2}\biggl(2E\bigl[\opnorm{\boldsymbol{D}^2F}^4\bigr]^{\frac{1}{4}}E\bigl[\Lqnorm{\boldsymbol{D}F}{\mu}^4\bigr]^{\frac{1}{4}}+E\biggl[\Lqnorm{\Lqprod{x}{(\boldsymbol{D}^2F)^2}{\mu}}{\mu}^2\biggr]^{\frac{1}{2}}\biggr)+\frac{1}{q\sigma^3}E\bigl[\Lqprod{\abs{x}}{\abs{\boldsymbol{D}F}^3}{\mu}\bigr]\\
&\leq\frac{\sqrt{2}}{q\sigma^2}\biggl(2E\bigl[\Lqnorm{\boldsymbol{D}^2F\otimes_1\boldsymbol{D}^2F}{\mu^{\otimes2}}^2\bigr]^{\frac{1}{4}}E\bigl[\Lqnorm{\boldsymbol{D}F}{\mu}^4\bigr]^{\frac{1}{4}}+E\biggl[\Lqnorm{\Lqprod{x}{(\boldsymbol{D}^2F)^2}{\mu}}{\mu}^2\biggr]^{\frac{1}{2}}\biggr)+\frac{1}{q\sigma^3}E\bigl[\Lqprod{\abs{x}}{\abs{\boldsymbol{D}F}^3}{\mu}\bigr]
\end{align}
\end{corollary}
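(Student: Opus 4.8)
The plan is to apply the upper bound of Theorem~\ref{main} to a standardized version of $F$ and then exploit that $F=I_q(f)$ lies in a single It\^o chaos. First I would pass to $\hat F=F/\sigma$ (still a $q$-th chaos element) and apply the upper bound of Theorem~\ref{main}. The key simplification is that on a single chaos $L^{-1}\hat F=-\frac1q\hat F$, so that $-\boldsymbol{D}L^{-1}\hat F=\frac1q\boldsymbol{D}\hat F=\frac{1}{q\sigma}\boldsymbol{D}F$. Substituting this into Theorem~\ref{main} turns the first term into $E\abs{1-\frac{1}{q\sigma^2}\Lqnorm{\boldsymbol{D}F}{\mu}^2}$ and the second (jump) term into $\frac{1}{q\sigma^3}E\bigl[\Lqprod{\abs{x}}{\abs{\boldsymbol{D}F}^3}{\mu}\bigr]$, the latter already being the last summand of the claim. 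This single-chaos reduction is exactly what circumvents the missing Mehler/Ornstein--Uhlenbeck equivalence: on one chaos $L^{-1}$ acts as an explicit scalar, so the unavailable bounds on $\boldsymbol{D}^2L^{-1}F$ are not needed.

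For the first term I would use that $E\bigl[\frac{1}{q\sigma^2}\Lqnorm{\boldsymbol{D}F}{\mu}^2\bigr]=1$, which holds because $E\bigl[\Lqnorm{\boldsymbol{D}F}{\mu}^2\bigr]=q\,E[F^2]=q\sigma^2$ for a $q$-th chaos. Hence by Jensen's inequality $E\abs{1-\frac{1}{q\sigma^2}\Lqnorm{\boldsymbol{D}F}{\mu}^2}\le\frac{1}{q\sigma^2}\sqrt{\text{Var}\bigl(\Lqnorm{\boldsymbol{D}F}{\mu}^2\bigr)}$, and the task becomes bounding this variance. The natural tool is the extended Poincar\'e inequality (Theorem~\ref{poincareineq}) applied to $G:=\Lqnorm{\boldsymbol{D}F}{\mu}^2$, giving $\text{Var}(G)\le E\bigl[\Lqnorm{\boldsymbol{D}G}{\mu}^2\bigr]$. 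I would then compute $\boldsymbol{D}_zG$ from the unified product rule~(\ref{productrule2}): writing $G=\int(\boldsymbol{D}_wF)^2\,d\mu(w)$ and differentiating in the direction $z=(t,x)$ yields $\boldsymbol{D}_zG=2\Lqprod{\boldsymbol{D}F}{\boldsymbol{D}^2F(z,\cdot)}{\mu}+x\,\Lqprod{\boldsymbol{D}^2F(z,\cdot)}{\boldsymbol{D}^2F(z,\cdot)}{\mu}$, where the last summand is precisely the extra jump contribution $x\,\boldsymbol{D}_zG\,\boldsymbol{D}_zH$ absent in the pure Wiener setting.

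Finally I would estimate the two pieces separately. Splitting via $(a+b)^2\le2a^2+2b^2$ inside the Poincar\'e bound produces the overall factor $\sqrt2$. For the first piece, reading $\Lqprod{\boldsymbol{D}F}{\boldsymbol{D}^2F(z,\cdot)}{\mu}$ as $\boldsymbol{D}^2F$ acting as an operator on $\boldsymbol{D}F$ gives $\int\bigl(\Lqprod{\boldsymbol{D}F}{\boldsymbol{D}^2F(z,\cdot)}{\mu}\bigr)^2d\mu(z)\le\opnorm{\boldsymbol{D}^2F}^2\Lqnorm{\boldsymbol{D}F}{\mu}^2$, and a Cauchy--Schwarz in $\omega$ then delivers $2E\bigl[\opnorm{\boldsymbol{D}^2F}^4\bigr]^{1/4}E\bigl[\Lqnorm{\boldsymbol{D}F}{\mu}^4\bigr]^{1/4}$; the jump piece contributes, after taking the $L^2_\mu\otimes\mathbb{P}$ norm, the middle summand $E\bigl[\Lqnorm{\Lqprod{x}{(\boldsymbol{D}^2F)^2}{\mu}}{\mu}^2\bigr]^{1/2}$ (up to the symmetric placement of the jump factor $x$). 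Collecting the $\frac{1}{q\sigma^2}$ prefactor yields the first displayed inequality, and the second follows from the spectral estimate $\opnorm{\boldsymbol{D}^2F}^2\le\Lqnorm{\boldsymbol{D}^2F\otimes_1\boldsymbol{D}^2F}{\mu^{\otimes2}}$ (the operator norm is dominated by the Hilbert--Schmidt norm of the operator square), which replaces $\opnorm{\boldsymbol{D}^2F}$ by the contraction norm. The hard part will be the careful bookkeeping of the jump term through the product rule and the operator estimate, and justifying the $L^4$-integrability of $\boldsymbol{D}F$ and $\boldsymbol{D}^2F$ needed to make every Cauchy--Schwarz step finite.
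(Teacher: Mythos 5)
Your proposal is correct and follows essentially the same route as the paper's own proof: apply Theorem~\ref{main}, use $-L^{-1}F=\frac{1}{q}F$ on a fixed chaos, reduce the first term to a variance, bound it via Theorem~\ref{poincareineq} together with the product rule~(\ref{productrule2}) (whose extra jump term yields the middle summand), split with $(a+b)^2\le 2a^2+2b^2$ to produce the $\sqrt{2}$, estimate the mixed term by $\opnorm{\boldsymbol{D}^2F}^2\Lqnorm{\boldsymbol{D}F}{\mu}^2$ plus Cauchy--Schwarz, and finish with the eigenvalue comparison $\opnorm{\boldsymbol{D}^2F}^4\le\Lqnorm{\boldsymbol{D}^2F\otimes_1\boldsymbol{D}^2F}{\mu^{\otimes2}}^2$. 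The only cosmetic differences are that you standardize $F$ explicitly rather than assuming $\sigma^2=1$ without loss of generality, and you substitute the single-chaos identity before (rather than after) invoking the Poincar\'e inequality.
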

\begin{proof}
Assume without loss of generality that $\mu=0$ and $\sigma^2=1$. By Theorem \ref{main} and H$\ddot{\text{o}}$lder we have that
$$d_W(F,N)\leq E\bigl[\bigl(1-\Lqprod{\boldsymbol{D}F}{-\boldsymbol{D}L^{-1}F}{\mu}\bigr)^2\bigr]^{\frac{1}{2}}+E\bigl[\Lqprod{\abs{x(\boldsymbol{D}F)^2}}{\abs{\boldsymbol{D}L^{-1}F}}{\mu}\bigr]$$
Also, notice that $E[\Lqprod{\boldsymbol{D}F}{-\boldsymbol{D}L^{-1}F}{\mu}]=E[-\delta\boldsymbol{D}L^{-1}F\cdot F]=E[F^2]=1$, so if $G=\Lqprod{\boldsymbol{D}F}{-\boldsymbol{D}L^{-1}F}{\mu}$ then $E\bigl[\bigl(1-\Lqprod{\boldsymbol{D}F}{-\boldsymbol{D}L^{-1}F}{\mu}\bigr)^2\bigr]=$Var$[G]$. By Theorem \ref{poincareineq} we have that Var$[G]\leq E[\Lqnorm{\boldsymbol{D}G}{\mu}^2]$. Also, by the product rule (\ref{productrule2}) we have that 
$$\boldsymbol{D}G=\Lqprod{\boldsymbol{D}^2F}{-\boldsymbol{D}L^{-1}F}{\mu}+\Lqprod{\boldsymbol{D}F}{-\boldsymbol{D}^2L^{-1}F}{\mu}+\Lqprod{x\boldsymbol{D}^2F}{-\boldsymbol{D}^2L^{-1}F}{\mu}$$
Putting all together and using the fact that $-L^{-1}F=\frac{1}{q}F$ we get,
\begin{align*}
E\bigl[\bigl(1-\Lqprod{\boldsymbol{D}F}{-\boldsymbol{D}L^{-1}F}{\mu}\bigr)^2\bigr]^{\frac{1}{2}}\leq \frac{\sqrt{2}}{q}\biggl(2E\biggl[\Lqnorm{\Lqprod{\boldsymbol{D}^2F}{\boldsymbol{D}F}{\mu}}{\mu}^2\biggr]^{\frac{1}{2}}+E\biggl[\Lqnorm{\Lqprod{x}{(\boldsymbol{D}^2F)^2}{\mu}}{\mu}^2\biggr]^{\frac{1}{2}}\biggr)
\end{align*}
The first term in the right is bounded above in the following way
$$E\biggl[\Lqnorm{\Lqprod{\boldsymbol{D}^2F}{\boldsymbol{D}F}{\mu}}{\mu}^2\biggr]^{\frac{1}{2}}\leq E\bigl[\opnorm{\boldsymbol{D}^2F}^4\bigr]^{\frac{1}{4}}E\bigl[\Lqnorm{\boldsymbol{D}F}{\mu}^4\bigr]^{\frac{1}{4}}$$
since $\Lqnorm{\Lqprod{\boldsymbol{D}^2F}{\boldsymbol{D}F}{\mu}}{\mu}^2\leq \opnorm{\boldsymbol{D}^2F}^2\Lqnorm{\boldsymbol{D}F}{\mu}^2$ and by H$\ddot{\text{o}}$lder. On the other hand, and again using the fact that $F$ is in the $q^{\text{th}}$ chaos, we get
$$E\bigl[\Lqprod{\abs{x(\boldsymbol{D}F)^2}}{\abs{\boldsymbol{D}L^{-1}F}}{\mu}\bigr]= \frac{1}{q}E\bigl[\Lqprod{\abs{x}}{\abs{\boldsymbol{D}F}^3}{\mu}\bigr]$$
thus, obtaining the first inequality. For the second inequality, it suffices to see that if $\{\gamma_j\}_{j\geq1}$ is the sequence of random eigenvalues of the random Hilbert-Schmidt operator $f\rightarrow\Lqprod{f}{D^2F}{\mu^{\otimes2}}$, then
$$\opnorm{\boldsymbol{D}^2F}^4=\max_{j\geq1}\abs{\gamma_j}^4\leq\sum_{j\geq1}\abs{\gamma_j}^4=\Lqnorm{\boldsymbol{D}^2F\otimes_1\boldsymbol{D}^2F}{\mu^{\otimes2}}^2$$
\end{proof}

As in Lemma \ref{nourdinineq}, this corollary basically says that if we want to show a CLT for a family of random variables $F_T$ (living in a fixed chaos) it is sufficient to check the following conditions,
\begin{enumerate}

\item {\bf Expectation of the First Derivative's Norm:}
\begin{align}\label{2cond1}
E\bigl[\Lqnorm{\boldsymbol{D}F_T}{\mu}^4\bigr] = O(1)\ \text{ as }\ T\to\infty
\end{align}

\item {\bf Expectation of the Cube of the First Derivative's Norm:}
\begin{align}\label{2cond2}
E\bigl[\Lqprod{\abs{x}}{\abs{\boldsymbol{D}F_T}^3}{\mu}\bigr]\rightarrow0\ \text{ as } \ T\rightarrow\infty
\end{align}

\item {\bf Expectation of the Contraction's Norm:}
\begin{align}\label{2cond3}
E\biggl[\Lqnorm{\boldsymbol{D}^2F_T\otimes_1\boldsymbol{D}^2F_T}{\mu^{\otimes2}}^2\biggr]\rightarrow0\ \text{ as } \ T\rightarrow\infty
\end{align}

\item {\bf Expectation of the Squared Second Derivative's Norm:}
\begin{align}\label{2cond4}
E\biggl[\Lqnorm{\Lqprod{x}{(\boldsymbol{D}^2F_T)^2}{\mu}}{\mu}^2\biggr]\rightarrow0\ \text{ as } \ T\rightarrow\infty
\end{align}

\item {\bf Existence of the Variance}
\begin{align}\label{2cond5}
\text{Var}[F_T]\rightarrow \Sigma^2\in(0,\infty)\ \text{ exists as } T\to\infty
\end{align}

\end{enumerate}

\section{Special cases and applications}

\subsection{The Wiener space case:\\Linear functionals of Gaussian-subordinated fields}

When we are working in this space the jump size is always zero, so the upper bound for the Wasserstein distance becomes
\begin{align}\label{wienercase}
d_W(F,N)\leq E\abs{1-\Lqprod{\boldsymbol{D}F}{-\boldsymbol{D}L^{-1}F}{\mu}}
\end{align}
which coincides perfectly with the bound computed in \cite{Nourdin0}.
\begin{preremark}
It is important to stress that Theorem \ref{main} is not a direct extension of the inequality in \cite{Nourdin0}, even though they appear to be similar. This is because in the pure Wiener case, the Malliavin calculus theory is developed for more abstract Hilbert spaces than $L^2_{\mu}$, and the inequality proved therein,
$$d_W(F,N)\leq E\abs{1-\Hprod{\boldsymbol{D}F}{-\boldsymbol{D}L^{-1}F}}$$
holds for any Hilbert space $\mathcal{H}$. However, when $\mathcal{H}=L^2_{\mu}$, Theorem \ref{main} is indeed an extension of Theorem 3.1 in \cite{Nourdin0}.
\end{preremark}
The Wiener space is well understood and the upper bounds obtained from (\ref{wienercase}) are more powerful than Corollary \ref{mainqchaos}. In fact, Lemma \ref{nourdinineq} is true for all $F\in\mathbb{D}^{2,4}$ and not just for functionals in a fixed Wiener chaos. As an application of Lemma \ref{nourdinineq}, the authors of \cite{Nourdin} proved a very useful central limit theorem for linear functionals of Gaussian-subordinated fields. Before stating it, some notation is introduced: Let $X_t$ be a centered Gaussian stationary process and define $C(t)=E[X_0X_t]=E[X_sX_{t+s}]$, its covariance function. By remark \ref{hilbertderivative}, is known that the Malliavin derivative of $X_t$ is well defined. Let $T>0$, $Z\sim \mathcal{N}\bigl(0,C(0)\bigr)$ and $\ f:\RR\rightarrow \RR$ be a real function of class $\mathcal{C}^2$ not constant s.t. $E\bigl[|f(Z)|\bigr]<\infty$ and $E\bigl[|f''(Z)|^4\bigr]<\infty$. In order to simplify the notation, the following random sequence is defined,
$$F_T = T^{-\frac{1}{2}}\int_0^T\bigl(f(X_t)-E[f(Z)]\bigr)dt$$
The theorem is stated as follows,
\begin{lemma}\label{finitecase}
Suppose that $\int_\RR \abs{C(t)}<\infty$, and assume that $f$ is a {\bf symmetric} real function. Then $\lim_{T\rightarrow\infty}\text{Var}[F_T]:=\Sigma^2\in(0,\infty)$ exists and as $T\rightarrow\infty$
$$F_T\stackrel{\text{law}}{\longrightarrow}N\sim\mathcal{N}(0,\Sigma^2)$$
\end{lemma}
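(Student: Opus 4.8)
The plan is to apply Lemma \ref{nourdinineq} to $F_T$ with Gaussian target $N_T\sim\mathcal{N}(0,\sigma_T^2)$, where $\sigma_T^2=\text{Var}[F_T]$, and to verify the three conditions of Remark \ref{conditions}. Once they hold, the lemma forces $d_W(F_T,N_T)\to0$; since $\sigma_T^2\to\Sigma^2$ one also has $d_W(N_T,N)\to0$ for $N\sim\mathcal{N}(0,\Sigma^2)$, and the triangle inequality then gives $d_W(F_T,N)\to0$, which implies convergence in law. The derivatives are computed from Remark \ref{hilbertderivative}, which gives $\boldsymbol{D}X_t=1_{[0,t]}$; the chain rule then yields $\boldsymbol{D}f(X_t)=f'(X_t)1_{[0,t]}$ and $\boldsymbol{D}^2f(X_t)=f''(X_t)1_{[0,t]}^{\otimes2}$, so that
\begin{align*}
\boldsymbol{D}F_T=T^{-1/2}\int_0^T f'(X_t)1_{[0,t]}\,dt,\qquad \boldsymbol{D}^2F_T=T^{-1/2}\int_0^T f''(X_t)1_{[0,t]}^{\otimes2}\,dt.
\end{align*}

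Using $\Hprod{1_{[0,t]}}{1_{[0,s]}}=C(t-s)$, expanding the norms and taking expectations turns conditions (\ref{cond1}) and (\ref{cond2}) into integrals of products of $C$. For (\ref{cond1}), Hölder's inequality together with stationarity ($X_{t_i}$ is distributed as $Z$) bounds the four-fold expectation of $f'$ by $E[|f'(Z)|^4]$, leaving $T^{-2}\bigl(\int_{[0,T]^2}|C(t-s)|\,dt\,ds\bigr)^2\leq\bigl(\int_\RR|C|\bigr)^2$, so $E[\Hnorm{\boldsymbol{D}F_T}^4]=O(1)$. The crux is (\ref{cond2}): the order-one contraction is
\begin{align*}
\boldsymbol{D}^2F_T\otimes_1\boldsymbol{D}^2F_T=T^{-1}\int_{[0,T]^2}f''(X_t)f''(X_s)\,C(t-s)\,1_{[0,t]}\otimes1_{[0,s]}\,dt\,ds,
\end{align*}
so its expected squared $\mathcal{H}^{\otimes2}$-norm equals $T^{-2}\int_{[0,T]^4}E\bigl[\prod_{i=1}^4 f''(X_{t_i})\bigr]C(t_1-t_2)C(t_3-t_4)C(t_1-t_3)C(t_2-t_4)\,dt$. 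Bounding the expectation by $E[|f''(Z)|^4]$ and reading off the $4$-cycle $t_1\to t_2\to t_4\to t_3\to t_1$, the shear $u=t_1-t_2$, $v=t_2-t_4$, $w=t_4-t_3$ (Jacobian $1$) gives, after integrating out $t_1\in[0,T]$,
\begin{align*}
E\bigl[\HHnorm{\boldsymbol{D}^2F_T\otimes_1\boldsymbol{D}^2F_T}^2\bigr]\leq E[|f''(Z)|^4]\,T^{-1}\int_{\RR^3}|C(u)||C(v)||C(w)||C(u+v+w)|\,du\,dv\,dw.
\end{align*}
Since $|C|\leq C(0)$ and $\int_\RR|C|<\infty$, the triple integral is at most $C(0)(\int_\RR|C|)^3<\infty$, whence this term is $O(T^{-1})\to0$, and in fact $d_W(F_T,N_T)=O(T^{-1/4})$.

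It remains to treat the variance (\ref{cond3}). Writing $\rho=C/C(0)$ and letting $c_q$ be the Hermite coefficients of $z\mapsto f(\sqrt{C(0)}\,z)$, the covariance identity (\ref{covf}) gives $\sigma_T^2=\sum_{q\geq1}c_q^2\,q!\,\psi_T(q)$ with $\psi_T(q)=T^{-1}\int_{[0,T]^2}\rho(t-s)^q\,dt\,ds$. Because $|\rho|\leq1$ and $\int_\RR|\rho|<\infty$, one has $|\psi_T(q)|\leq\int_\RR|\rho|$ uniformly in $q$ while $\psi_T(q)\to\int_\RR\rho(u)^q\,du$, so dominated convergence over the summation index yields $\sigma_T^2\to\Sigma^2=\sum_{q\geq1}c_q^2\,q!\int_\RR\rho(u)^q\,du$. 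This is precisely where symmetry enters: $f$ even forces $c_q=0$ for odd $q$, so only even $q$ survive, and for those $\int_\RR\rho^q\geq0$—indeed strictly positive by continuity of $\rho$ near the origin where $\rho(0)=1$. As $f$ is non-constant, some even coefficient is nonzero, giving $0<\Sigma^2<\infty$.

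The main obstacle is the contraction estimate: recognizing the cyclic structure of the four $C$-factors and extracting the $O(T^{-1})$ decay from $\int_\RR|C|<\infty$ is what actually drives the CLT. A secondary subtlety is that the symmetry hypothesis is exactly the ingredient ruling out a degenerate limit—it kills the odd chaoses, whose contributions $\int_\RR\rho^q$ could otherwise cause cancellation and send $\Sigma^2$ to zero. Throughout, the finiteness of the Gaussian $L^2$ and $L^4$ moments of $f$, $f'$ and $f''$, which the $\mathcal{C}^2$ and integrability hypotheses provide, is what legitimizes pulling the $f$-factors out via Hölder's inequality.
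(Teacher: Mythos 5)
Your proof is correct and follows essentially the approach the paper prescribes: it is exactly the scheme of Lemma \ref{nourdinineq} and Remark \ref{conditions}, and your derivative, contraction (change of variables plus $\abs{C}\leq C(0)$ and integrability of $C$), and Hermite-expansion computations mirror those carried out in the paper's own proof of Theorem \ref{main2}, specialized to $\widetilde{V}(T)=T$. Note that the paper does not actually reprove this lemma (it cites \cite{Nourdin}), so your write-up, including the step the paper leaves implicit---symmetry of $f$ killing the odd Hermite coefficients so that only the non-negative terms $\int_\RR \rho^q$ with $q$ even contribute, guaranteeing $\Sigma^2\in(0,\infty)$---is the natural and correct instantiation of that framework.
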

\begin{preremark}\label{quantrate}
One advantage of inequality (\ref{Wdist}) is the fact that it allows us to quantify rates of convergence to normality. Indeed, it has been proved (see \cite{Nourdin}) that if $\widetilde{Z}\sim\mathcal{N}(0,1)$, then (as $T\rightarrow\infty$), $$d_W\biggl(\frac{F_T}{\sqrt{\text{Var}[F_T]}},\widetilde{Z}\biggr)=O(T^{-\frac{1}{4}})$$
\end{preremark}
Our goal in this subsection is to extend this result to the case when $\int_\RR \abs{C(t)}=\infty$. This is achievable under some conditions on the decay rate of the covariance. In fact, it is very convenient that for this functional the conditions (\ref{cond1}), (\ref{cond2}) and (\ref{cond3}) reduce to just one condition on the covariance of the underlying stationary Gaussian process $X_t$. Let $V(T)$ be a strictly positive continuous function with $V(T)\rightarrow0$ as $T\to\infty$ such that either $TV(T)\rightarrow0$ or $V\in\mathcal{C}^1$ and $TV'(T)\rightarrow0$ as $T\rightarrow\infty$. The following is the condition on the covariance that replace the three conditions on remark \ref{conditions}.\\

\noindent{\bf Condition $\boldsymbol{\ast}$:}
Either $\int_\RR \abs{C(t)}<\infty$ or $V(T)$ (with the above characteristics) exists such that,
$$\frac{C(T)}{V(T)}\xrightarrow[T\rightarrow \infty]{}M\neq0$$
$V(T)$ represents the decay rate for the covariance function.
Consider the following function
\begin{align*}
\widetilde{V}(T)=\begin{cases}
T & \text{if } \ \int_0^\infty\abs{C(x)}dx<\infty\\
\int_0^T\int_0^yV(x)dxdy & \text{if } \ \int_0^\infty\abs{C(x)}dx=\infty
\end{cases}
\end{align*}
Let $\mathcal{M}_C:=\{f\in\mathcal{C}^2\ /\ f \text{ is symmetric if } \int_\RR \abs{C(t)}<\infty \text{ or } E[f(Z)Z]\neq0 \text{ if } \int_\RR \abs{C(t)}=\infty\}$ and rewrite the functional $F_T$ as follows,
$$F_T = \widetilde{V}(T)^{-\frac{1}{2}}\int_0^T\bigl(f(X_t)-E[f(Z)]\bigr)dt$$

\begin{theorem}\label{main2}
Suppose that condition $\ast$ is verified by $C(t)$ and that $f\in\mathcal{M}_C$. Then $\lim_{T\rightarrow\infty}\text{Var}[F_T]:=\Sigma^2\in(0,\infty)$ exists and as $T\rightarrow\infty$
$$F_T\stackrel{\text{law}}{\longrightarrow}N\sim\mathcal{N}(0,\Sigma^2)$$
Furthermore, if $\int_\RR \abs{C(t)}=\infty$, then $\Sigma^2=2M\bigl(E[f(Z)Z]\bigr)^2$.
\end{theorem}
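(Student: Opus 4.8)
The plan is to reduce the statement to verifying the three conditions (\ref{cond1})--(\ref{cond3}) of Remark \ref{conditions} and then invoke Lemma \ref{nourdinineq}. I would first fix the Malliavin setting of Remark \ref{hilbertderivative}: normalise so that $C(0)=1$, take $\mathcal{H}$ to be the closure of the step functions under $\Hprod{1_{[0,t]}}{1_{[0,s]}}=C(t-s)$, so that $X_t=W(1_{[0,t]})$ and $\boldsymbol{D}X_t=1_{[0,t]}$. Expanding $f$ as in (\ref{decomp}), with $c_1=E[f(Z)Z]$, the chain rule gives
\begin{align*}
\boldsymbol{D}F_T=\widetilde{V}(T)^{-1/2}\int_0^Tf'(X_t)\,1_{[0,t]}\,dt,\qquad \boldsymbol{D}^2F_T=\widetilde{V}(T)^{-1/2}\int_0^Tf''(X_t)\,1_{[0,t]}^{\otimes2}\,dt.
\end{align*}
A useful observation is that $\boldsymbol{D}^2F_T$ only sees the Hermite components of $f$ of order $\ge2$, since the linear part $c_1X_t$ has vanishing second derivative; this is what makes the non-integrable case tractable.

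Next I would convert the conditions into integrals of powers of $C$ through the covariance identity (\ref{covf}). Since $C$ is even, $\int_0^T\!\int_0^T g(s-t)\,ds\,dt=2\int_0^T(T-u)g(u)\,du$, and with $\text{Cov}[f(X_s),f(X_t)]=\sum_{q\ge1}c_q^2q!\,C(s-t)^q$ one gets
\begin{align*}
\text{Var}[F_T]=\frac{2}{\widetilde{V}(T)}\sum_{q\ge1}c_q^2q!\int_0^T(T-u)\,C(u)^q\,du.
\end{align*}
For the integrable regime $\int_\RR\abs{C(t)}<\infty$ (where $\widetilde{V}(T)=T$) this reduces to Lemma \ref{finitecase}. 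In the non-integrable regime, where $\widetilde{V}(T)=\int_0^T(T-u)V(u)\,du$, I isolate the $q=1$ term: differentiating $\int_0^T(T-u)C(u)\,du$ twice gives $C(T)$ (and $V(T)$ for $\widetilde{V}$), so two applications of L'H\^{o}pital together with condition $\ast$ yield $\widetilde{V}(T)^{-1}\int_0^T(T-u)C(u)\,du\to M$, hence the $q=1$ term tends to $2Mc_1^2=2M(E[f(Z)Z])^2$. For $q\ge2$, since $\abs{C(u)}\le1$ and $V(u)\to0$ we have $\sum_{q\ge2}c_q^2q!\abs{C(u)}^q\le \text{Var}[f(Z)]\,\abs{C(u)}^2=o(V(u))$, whose weighted integral is $o(\widetilde{V}(T))$, so these terms disappear and $\Sigma^2=2M(E[f(Z)Z])^2$. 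The hypothesis $f\in\mathcal{M}_C$ (namely $E[f(Z)Z]\ne0$, resp. $f$ symmetric when $\int_\RR\abs{C(t)}<\infty$) is precisely what guarantees $\Sigma^2\in(0,\infty)$.

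The main obstacle will be condition (\ref{cond2}). Using $(1_{[0,s]}^{\otimes2})\otimes_1(1_{[0,t]}^{\otimes2})=C(s-t)\,1_{[0,s]}\otimes1_{[0,t]}$ and taking the $\mathcal{H}^{\otimes2}$-norm, its expectation becomes
\begin{align*}
\frac{1}{\widetilde{V}(T)^2}\int_{[0,T]^4}\!E\bigl[f''(X_{s_1})f''(X_{t_1})f''(X_{s_2})f''(X_{t_2})\bigr]\,C(s_1-t_1)C(s_2-t_2)C(s_1-s_2)C(t_1-t_2)\,ds\,dt.
\end{align*}
Bounding the Gaussian expectation by $E[\abs{f''(Z)}^4]$ via H\"{o}lder, the task reduces to showing that the ``circular'' integral $\int_{[0,T]^4}\abs{C(s_1-t_1)C(t_1-t_2)C(t_2-s_2)C(s_2-s_1)}=o\bigl(\widetilde{V}(T)^2\bigr)$. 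Here the regularity assumed on $V$ (either $TV(T)\to0$ or $TV'(T)\to0$) enters, as it pins down the growth of $\widetilde{V}(T)$ (in the non-integrable regime one expects $\widetilde{V}(T)\asymp T^2V(T)$); combined with the decay $C\sim MV$, a scaling argument of Breuer--Major type shows this four-fold integral is negligible against $\widetilde{V}(T)^2$. The analogous, easier estimate (showing $\text{Var}[\Hnorm{\boldsymbol{D}F_T}^2]\to0$) yields condition (\ref{cond1}). With (\ref{cond1})--(\ref{cond3}) established, Lemma \ref{nourdinineq} gives $d_W(F_T,N)\to0$, proving the CLT with the stated limiting variance.
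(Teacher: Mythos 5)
Your overall strategy coincides with the paper's: reduce the theorem to conditions (\ref{cond1})--(\ref{cond3}) of Remark \ref{conditions}, compute $\boldsymbol{D}F_T$ and $\boldsymbol{D}^2F_T$ explicitly in the Hilbert space of Remark \ref{hilbertderivative}, expand the covariance via (\ref{covf}), and invoke Lemma \ref{nourdinineq}. Your variance computation is sound, and in one respect slightly more careful than the paper's: your uniform tail bound $\sum_{q\ge2}c_q^2q!\abs{C(u)}^q\le \text{Var}[f(Z)]\abs{C(u)}^2$ legitimizes interchanging the limit $T\to\infty$ with the sum over chaoses, a point the paper handles only term by term (part 4 of Proposition \ref{computations}) without uniform control.

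However, there is a genuine gap at the decisive step, condition (\ref{cond2}). You correctly reduce it to showing
$$\int_{[0,T]^4}\abs{C(s_1-t_1)C(t_1-t_2)C(t_2-s_2)C(s_2-s_1)}\,ds_1\,ds_2\,dt_1\,dt_2=o\bigl(\widetilde{V}(T)^2\bigr),$$
but you then only assert this via ``a scaling argument of Breuer--Major type'', resting on the heuristic $\widetilde{V}(T)\asymp T^2V(T)$. That heuristic is false in general under condition $\ast$: take $V(T)=1/(T\log T)$ for large $T$, which is admissible since $TV(T)\to0$ and $\int_0^\infty V=\infty$; then $\widetilde{V}(T)\sim T\log\log T$ while $T^2V(T)=T/\log T$, so $\widetilde{V}(T)/(T^2V(T))\to\infty$. (The heuristic does hold for regularly varying $V(T)=T^{-\alpha}$, but condition $\ast$ does not require regular variation.) So your sketch proves nothing in the general case, even though the claim itself is true. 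The paper closes exactly this gap without any comparability assumption: it bounds one of the four covariance factors by $C(0)$, changes variables $y=(t-s,u-v,t-u,v)$ to dominate the four-fold integral by $8C(0)\,T\bigl(\int_0^T\abs{C(y)}\,dy\bigr)^3$, and then proves $\widetilde{V}(T)^{-2}\,T\bigl(\int_0^T\abs{C}\bigr)^3\to0$ by L'H\^opital's rule --- this is precisely where the hypotheses $TV(T)\to0$ or $TV'(T)\to0$ enter (Proposition \ref{computations}, parts 1 and 3), and it is also what produces the explicit convergence rates of Remark \ref{quantrate2}. Without this (or an equivalent) quantitative estimate, your proof of the CLT is incomplete; the rest of what you wrote stands.
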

Before tackling this theorem, it is necessary to verify some facts that would simplify the proof.
\begin{prop}\label{computations}
Suppose that $\int_\RR \abs{C(t)}=\infty$. Then as $T\rightarrow\infty$,
\begin{enumerate}

\item $\bigl(\int_0^TV(x)dx\bigr)^{-1}\int_0^T\abs{C(t)}dt=O(1)$

\item $\widetilde{V}(T)^{-1}\int_{[0,T]^2}\abs{C(t-s)}dsdt=O(1)$

\item
\begin{itemize}

\item If $TV(T)\rightarrow0$:\\
$\widetilde{V}(T)^{-2}T\bigl(\int_0^T\abs{C(t)}dt\bigr)^3=O\bigl(\max\{V(T),TV(T)^2\bigl(\int_0^TV(x)dx\bigr)^{-1}\}\bigr)$

\item If $TV(T)\nrightarrow0$ and $TV'(T)\rightarrow0$\\
$\widetilde{V}(T)^{-2}T\bigl(\int_0^T\abs{C(t)}dt\bigr)^3=O\bigl(\max\{V(T),TV'(T)\}\bigr)$

\end{itemize}

\item For fixed $q\geq1$:
\begin{align*}
\widetilde{V}(T)^{-1}\int_{[0,T]^2}C(t-s)^qdsdt\rightarrow 2M1_{\{q=1\}}=
\begin{cases}
2M & \text{iff } \ q=1\\
0 & \text{iff } \ q\neq 1
\end{cases}
\end{align*}

\end{enumerate}
\end{prop}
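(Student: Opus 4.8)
The plan is to reduce all four statements to one‑dimensional integral asymptotics and then apply an integral version of the Stolz--Ces\`aro theorem. First I would record two elementary reductions. Since $C$ is the covariance of a stationary process it is even, so the substitution $u=t-s$ gives $\int_{[0,T]^2}\varphi(C(t-s))\,ds\,dt=2\int_0^T(T-u)\,\varphi(C(u))\,du$; in particular the double integrals in parts 2 and 4 become $2\int_0^T(T-u)\abs{C(u)}\,du$ and $2\int_0^T(T-u)C(u)^q\,du$. Likewise, writing $W(T):=\int_0^TV(x)\,dx$, Fubini gives $\widetilde V(T)=\int_0^TW(y)\,dy=\int_0^T(T-y)V(y)\,dy=TW(T)-J(T)$, where $J(T):=\int_0^TyV(y)\,dy$. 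Thus every quantity is built from the weight $(T-u)$ tested against $\abs{C}$, $C^q$, or $V$, and part 1 is the unweighted analogue. Condition $\ast$ gives $C(t)/V(t)\to M$ (hence $\abs{C(t)}/V(t)\to\abs{M}$), and $\int_0^\infty\abs{C}=\infty$ forces $\int_0^\infty V=\infty$, i.e. $W(T)\to\infty$; so each ratio is one of \emph{asymptotically proportional integrands}.

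Second, I would isolate the single analytic tool. For part 1 the integral Stolz--Ces\`aro theorem applies directly: if $g(u)/V(u)\to\ell$, $V>0$ and $\int_0^\infty V=\infty$, then $\int_0^Tg/\int_0^TV\to\ell$; with $g=\abs{C}$ this gives $\int_0^T\abs{C}\sim\abs{M}W(T)$, so part 1 even converges (to $\abs{M}$). For the weighted statements I would prove the companion lemma: if $g(u)/V(u)\to\ell$ then $\int_0^T(T-u)g(u)\,du/\widetilde V(T)\to\ell$. The proof is an $\varepsilon$‑splitting: fix $A$ with $\abs{g/V-\ell}<\varepsilon$ on $[A,\infty)$, bound the tail by $\varepsilon\int_0^T(T-u)V(u)\,du=\varepsilon\widetilde V(T)$, and bound the head by $C_A\cdot T$, which is negligible because $\widetilde V(T)/T=\tfrac1T\int_0^TW$ is the running average of $W\to\infty$ and hence tends to $\infty$. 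Applying this with $g=\abs{C},\ \ell=\abs{M}$ gives part 2 (limit $2\abs{M}$, in particular $O(1)$); with $g=C,\ \ell=M$ and $q=1$ it gives the $2M$ in part 4; and for $q\ge2$ one has $C(u)^q/V(u)\sim M^qV(u)^{q-1}\to0$, so the same lemma forces the ratio to $0$, finishing part 4.

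The real obstacle is part 3, where I need sharp two‑sided control of $\widetilde V(T)$ rather than a mere limit. Using part 1 to replace $\int_0^T\abs{C}$ by $\abs{M}W(T)$, the claim reduces to estimating $\widetilde V(T)^{-2}\,T\,W(T)^3$, and since $\widetilde V=TW-J$ everything hinges on the size of $J(T)$ relative to $TW(T)$. In the regime $TV(T)\to0$ the integrand $yV(y)\to0$, so $J(T)=o(T)$ by Ces\`aro, whence $\widetilde V(T)\sim TW(T)$; one then substitutes and uses $V(T)\to0$ together with the relative sizes of $W(T)$, $TV(T)$ and $W(T)/T$ to reach $O(\max\{V(T),\,TV(T)^2/W(T)\})$. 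In the regime $TV(T)\not\to0,\ TV'(T)\to0$ I would integrate $J$ by parts once more to bring in $V'$, namely $J(T)=\tfrac12T^2V(T)-\tfrac12\int_0^Tu^2V'(u)\,du$, and use $TV'(T)\to0$ (so $uV'(u)\to0$ and $\int_0^Tu^2V'(u)\,du=o(T^2)$) to pin down $\widetilde V(T)$ against $TW(T)$ and $T^2V(T)$, arriving at $O(\max\{V(T),\,TV'(T)\})$.

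The step I expect to be most delicate is precisely this asymptotic control of $\widetilde V(T)$: the growth hypotheses are only one‑sided ($TV\to0$ or $TV'\to0$) and do not presuppose regular variation of $V$, so producing a clean \emph{lower} bound on $\widetilde V(T)=\int_0^TW$ — equivalently showing $\widetilde V(T)\asymp TW(T)$ up to the admissible correction, which amounts to comparing $TV(T)$ with $W(T)$ — is where the argument must be handled carefully (for instance by splitting $\int_0^TW$ over $[cT,T]$ and controlling $W(cT)/W(T)$). Once that relation is in hand, part 3 follows by direct substitution, and the whole proposition is assembled from part 1, the weighted Stolz lemma, and this single estimate.
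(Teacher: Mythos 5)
Your handling of parts 1, 2 and 4 is correct, and it is a genuinely more careful route than the paper's. Writing $W(T)=\int_0^TV(x)\,dx$, the paper proves all four items by iterated L'H\^opital applied to the quotients $\int_0^T\int_0^t(\cdot)\,dx\,dt\,/\,\widetilde V(T)$; your integral Stolz--Ces\`aro step is the rigorous form of exactly that device, and your weighted lemma (if $g/V\to\ell$ then $\int_0^T(T-u)g(u)\,du\,/\,\widetilde V(T)\to\ell$, proved by $\varepsilon$-splitting plus $\widetilde V(T)/T\to\infty$) treats the double integrals in one stroke. It also quietly repairs two blemishes in the paper's part 4: you need no case distinction on whether $\int_0^TC(x)^q\,dx$ diverges, and no discussion of the sign of $C^q$ for odd $q$, since the $\varepsilon$-splitting argument is indifferent to both.

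The genuine gap is part 3, and it is worse than the ``delicate step'' you flag: in the regime $TV(T)\to0$ your own (correct) intermediate results make the target bound unreachable. From $J(T)=\int_0^TyV(y)\,dy=o(T)$ you correctly get $\widetilde V(T)\sim TW(T)$, hence, combined with part 1, the quantity $\widetilde V(T)^{-2}T\bigl(\int_0^T\abs{C(t)}dt\bigr)^3$ is asymptotic to $\abs{M}^3\,W(T)/T$. But $TV(T)\to0$ and $W(T)\to\infty$ force $TV(T)/W(T)\to0$, so both $V(T)$ and $TV(T)^2/W(T)=(TV(T)/W(T))^2\cdot (W(T)/T)$ are $o\bigl(W(T)/T\bigr)$; that is, $\max\{V(T),TV(T)^2/W(T)\}=o\bigl(W(T)/T\bigr)$, and no substitution can ``reach'' the claimed $O(\max\{V,TV^2/W\})$. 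Concretely, $V(t)=1/(t\log t)$ (with $C\sim MV$, admissible as a covariance by P\'olya's criterion) has $TV\to0$ and $\int_0^\infty V=\infty$, and gives left side of order $\log\log T/T$ against a claimed bound of order $1/(T\log T)$. So the first bullet of part 3 is false as printed, which also exposes the flaw in the paper's own proof: L'H\^opital only identifies limits, not rates, and the paper's chained application implicitly treats the computation $\lim W^2/(2\widetilde V)=\lim V$ (both sides are $0$) as if it identified $\widetilde V$ with $W^2/(2V)$ --- exactly wrong here, since $\widetilde V\sim TW\ll W^2/V$ when $TV\to0$. What survives, and all that Theorem \ref{main2} needs, is that the quantity tends to $0$, which your computation proves (at the true rate $W(T)/T$, by Ces\`aro); only the rate claim of Remark \ref{quantrate2} in the $TV\to0$ regime is affected, and none of the paper's examples fall in that regime. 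Your case-2 sketch ($TV\nrightarrow0$, $TV'\to0$) remains incomplete as you acknowledge: there you would still have to relate $W(T)/T$ to $\max\{V(T),TV'(T)\}$, and that comparison does not follow from the stated hypotheses without additional structure on $V$.
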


\begin{proof}
The proof just involves simple applications of L'H$\hat{\text{o}}$pital's rule (L).
\begin{enumerate}

\item $$\lim_{T\rightarrow\infty}\frac{\int_0^T\abs{C(t)}dt}{\int_0^TV(x)dx}\stackrel{\text{L}}{=}\lim_{T\rightarrow\infty}\frac{\abs{C(T)}}{V(T)}=\abs{M}$$

\item Notice first that $\int_{[0,T]^2}\abs{C(t-s)}dsdt=2\int_0^T\int_0^t\abs{C(x)}dxdt$ so
$$\lim_{T\rightarrow\infty}\frac{\int_{[0,T]^2}\abs{C(t-s)}dsdt}{\widetilde{V}(T)}=\lim_{T\rightarrow\infty}\frac{2\int_0^T\int_0^t\abs{C(x)}dxdt}{\int_0^T\int_0^yV(x)dxdy}\stackrel{\text{L}}{=}2\lim_{T\rightarrow\infty}\frac{\abs{C(T)}}{V(T)}=2\abs{M}$$

\item
\begin{itemize}

\item If $TV(T)\rightarrow0$:\\
$$ \lim_{T\rightarrow\infty}\frac{T\bigl(\int_0^T\abs{C(t)}dt\bigr)^3}{\widetilde{V}(T)^2}=\lim_{T\rightarrow\infty}\biggl(\underbrace{\frac{\int_0^T\abs{C(t)}dt}{\int_0^TV(x)dx}}_{=O(1)}\biggr)^3\lim_{T\rightarrow\infty}\frac{T\bigl(\int_0^TV(x)dx\bigr)^3}{\widetilde{V}(T)^2}\stackrel{\text{L}}{=}O(1)\lim_{T\rightarrow\infty}\biggl(V(T)+\frac{3TV(T)^2}{2\int_0^TV(x)dx}\biggr)$$

\item If $TV(T)\nrightarrow0$ and $TV'(T)\rightarrow0$\\
$$ \lim_{T\rightarrow\infty}\frac{T\bigl(\int_0^T\abs{C(t)}dt\bigr)^3}{\widetilde{V}(T)^2}=\lim_{T\rightarrow\infty}\biggl(\underbrace{\frac{\int_0^T\abs{C(t)}dt}{\int_0^TV(x)dx}}_{=O(1)}\biggr)^3\lim_{T\rightarrow\infty}\frac{T\bigl(\int_0^TV(x)dx\bigr)^3}{\widetilde{V}(T)^2}\stackrel{\text{L}}{=}O(1)\lim_{T\rightarrow\infty}\bigl(4V(T)+3TV'(T)\bigr)$$

\end{itemize}

\item If for $q>1$, either $\lim_{T\rightarrow\infty}\int_{[0,T]^2}C(t-s)^qdsdt<\infty \ \text{ or }\ \lim_{T\rightarrow\infty}\int_0^TC(x)^qdx<\infty$, then the result will follow trivially. So let's assume that both go to infinity as $T$ goes to infinity.
$$ \lim_{T\rightarrow\infty}\frac{\int_{[0,T]^2}C(t-s)^qdsdt}{\widetilde{V}(T)}=\lim_{T\rightarrow\infty}\frac{2\int_0^T\int_0^tC(x)^qdxdt}{\int_0^T\int_0^yV(x)dxdy}\stackrel{\text{L}}{=}2\lim_{T\rightarrow\infty}\frac{C(T)^q}{V(T)}=2\lim_{T\rightarrow\infty}\underbrace{\biggl(\frac{C(T)}{V(T)}\biggr)^q}
_{\rightarrow M^q}\underbrace{V(T)^{(q-1)}}_{\rightarrow 0 \text{ if } q>1}=2M1_{\{q=1\}}$$

\end{enumerate}
\end{proof}
Now, to the proof of Theorem \ref{main2}
\begin{proof}
Notice that if $\int_\RR\abs{C(t)}<\infty$ then Theorem \ref{main2} reduces to Lemma \ref{finitecase} and there is nothing left to prove. Assume then, that $\int_\RR\abs{C(t)}=\infty$. Due to remark \ref{conditions}, it is enough to check that condition $\ast$ implies conditions (\ref{cond1}), (\ref{cond2}) and (\ref{cond3}).
\begin{itemize}

\item {\bf Expectation of the First Derivative's Norm:}\\
\begin{itemize}

\item First Malliavin Derivative:
$$\boldsymbol{D} F_T = \widetilde{V}(T)^{-\frac{1}{2}}\int_0^Tf'(X_t)1_{[0,t]}dt$$

\item Norm of the First Malliavin Derivative:
$$\Hnorm{\boldsymbol{D} F_T}^2 = \widetilde{V}(T)^{-1}\int_{[0,T]^2}\hspace{-.7cm}f'(X_t)f'(X_s)\Hprod{1_{[0,t]}}{1_{[0,s]}} dtds = \widetilde{V}(T)^{-1}\int_{[0,T]^2}\hspace{-.7cm}f'(X_t)f'(X_s)C(t-s) dtds$$
Then,
$$\Hnorm{\boldsymbol{D} F_T}^4 = \widetilde{V}(T)^{-2}\int_{[0,T]^4}\hspace{-.7cm}f'(X_t)f'(X_s)f'(X_u)f'(X_v)C(t-s)C(u-v) dtdsdudv$$

\item Expectation of the First Malliavin Derivative's Norm:\\
By using H$\ddot{\text{o}}$lder (twice) on the expectation and by the stationarity of $X_t$ we have the bound,
$$\abs{E[f'(X_t)f'(X_s)f'(X_u)f'(X_v)]}\leq E\bigl[\abs{f'(Z)}^4\bigr]$$
finally recovering the power we get,
$$E\bigl[\Hnorm{\boldsymbol{D} F_T}^4\bigr]\leq E[|f'(Z)|^4]\underbrace{\biggl(\widetilde{V}(T)^{-1}\int_{[0,T]^2}\abs{C(t-s)} dtds\biggr)^2}_{=O(1) \text{ by Proposition \ref{computations}}}$$

\end{itemize}
All this proves that,
\begin{align*}
\boxed{E\bigl[\Hnorm{\boldsymbol{D} F_T}^4\bigr]^{\frac{1}{4}}=O(1)\ \text{ as }\ T\to\infty}
\end{align*}

\item {\bf Expectation of the Contraction's Norm:}\\
In the same way we get,

\begin{itemize}

\item Second Malliavin Derivative:
$$\boldsymbol{D}^2 F_T = \widetilde{V}(T)^{-\frac{1}{2}}\int_0^Tf''(X_t)1_{[0,t]}^{\otimes2}dt$$

\item Contraction of Order 1:
\begin{align*}
\boldsymbol{D}^2F_T\otimes_1\boldsymbol{D}^2F_T =& \widetilde{V}(T)^{-1}\int_{[0,T]^2}\hspace{-.7cm}f''(X_t)f''(X_s)1_{[0,t]}\otimes 1_{[0,s]}\left\langle 1_{[0,t]},1_{[0,s]}\right\rangle_{\mathcal{H}} dtds\\
=& \widetilde{V}(T)^{-1}\int_{[0,T]^2}\hspace{-.7cm}f''(X_t)f''(X_s)1_{[0,t]}\otimes 1_{[0,s]}C(t-s) dtds
\end{align*}

\item Norm of the Contraction:
\begin{align*}
\HHnorm{\boldsymbol{D}^2F_T\otimes_1\boldsymbol{D}^2F_T}^2 &= \widetilde{V}(T)^{-2}\int_{[0,T]^4}\hspace{-.7cm}f''(X_t)f''(X_s)f''(X_u)f''(X_v)C(t-s)C(u-v)\\
&\hspace{5cm}\times\left\langle 1_{[0,t]},1_{[0,u]}\right\rangle_{\mathcal{H}}\left\langle 1_{[0,s]},1_{[0,v]}\right\rangle_{\mathcal{H}} dtdsdudv\\
&= \widetilde{V}(T)^{-2}\int_{[0,T]^4}\hspace{-.7cm}f''(X_t)f''(X_s)f''(X_u)f''(X_v)C(t-s)C(u-v)C(t-u)C(s-v) dtdsdudv
\end{align*}

\item Expectation of the Contraction's Norm:\\
By using H$\ddot{\text{o}}$lder in the same way as above we get,
$$E\bigl[\HHnorm{\boldsymbol{D}^2F_T\otimes_1\boldsymbol{D}^2F_T}^2\bigr] \leq E[|f''(Z)|^4]\widetilde{V}(T)^{-2}\int_{[0,T]^4}\abs{C(t-s)C(u-v)C(t-u)C(s-v)} dtdsdudv$$
Now, let's make the changes of variable $y=(t-s,u-v,t-u,v)$, and let's denote the new region by $\widetilde{\Omega}\times[0,T]$. So,
\begin{align*}
E\bigl[\HHnorm{\boldsymbol{D}^2F_T\otimes_1\boldsymbol{D}^2F_T}^2\bigr] &\leq E[|f''(Z)|^4]\widetilde{V}(T)^{-2}\int_0^T\int_{\widetilde{\Omega}}\abs{C(y_1)C(y_2)C(y_3)C(y_2+y_3-y_1)} dy\\
&= E[|f''(Z)|^4]\widetilde{V}(T)^{-2}T\int_{\widetilde{\Omega}}\abs{C(y_1)C(y_2)C(y_3)C(y_2+y_3-y_1)} dy_1dy_2dy_3
\end{align*}
Taking in account that by Cauchy-Schwarz, $\forall \ t\in\RR$,
$$\abs{C(t)}=\overbrace{\frac{\abs{E[X_0X_t]}}{\sqrt{\text{Var}[X_0]\text{Var}[X_t]}}}^{\leq 1}\overbrace{\sqrt{\text{Var}[X_0]\text{Var}[X_t]}}^{=C(0)}\leq C(0)$$
Also, it is clear that $\widetilde{\Omega}\subset[-T,T]^3$ and since the integrand is a non-negative even function, we can deduce that,
\begin{align*}
E\bigl[\HHnorm{\boldsymbol{D}^2F_T\otimes_1\boldsymbol{D}^2F_T}^2\bigr] &\leq E[|f''(Z)|^4]C(0)\widetilde{V}(T)^{-2}T\biggl(2\int_{[0,T]}\abs{C(y)}dy\biggr)^3\\
&= 8E[|f''(Z)|^4]C(0)\underbrace{\widetilde{V}(T)^{-2}T\biggl(\int_0^T\abs{C(y)}dy\biggr)^3}_{\xrightarrow[T\to\infty]{}0 \text{ by Proposition \ref{computations}}}
\end{align*}

All this proves that,
\begin{itemize}

\item If $TV(T)\rightarrow0$:\\
\begin{align*}
\hspace{-2cm} \boxed{E\bigl[\HHnorm{\boldsymbol{D}^2F_T\otimes_1\boldsymbol{D}^2F_T}^2\bigr]^{\frac{1}{4}}=O\biggl(\max\left\{V(T),TV(T)^2\biggl(\int_0^TV(x)dx\biggr)^{-1}\right\}^{\frac{1}{4}}\biggr)\ \text{ as }\ T\to\infty}
\end{align*}

\item If $TV(T)\nrightarrow0$ and $TV'(T)\rightarrow0$\\
\begin{align*}
\boxed{E\bigl[\HHnorm{\boldsymbol{D}^2F_T\otimes_1\boldsymbol{D}^2F_T}^2\bigr]^{\frac{1}{4}}=O\bigl(\max\{V(T),TV'(T)\}^{\frac{1}{4}}\bigr)\ \text{ as }\ T\to\infty}
\end{align*}

\end{itemize}
\end{itemize}

\item {\bf Existence of the Variance:}\\
Since $f\in\mathcal{M}_C$ then $E[f(X_0)X_0]=E[f(Z)Z]\neq0$. Also $H_1(x)=x$, so the first Hermite constant in the expansion (\ref{decomp}) is not 0, i.e., $c_1=E[f(X_0)X_0]\neq0$. Using the formula (\ref{covf}) for the covariance of $f$ we get,
\begin{align*}
\text{Var}[F_T]&=E\biggl[\biggl(\widetilde{V}(T)^{-\frac{1}{2}}\int_0^T\bigl(f(X_t)-E[f(Z)]\bigr)dt\biggr)^2\biggr]=\widetilde{V}(T)^{-1}\int_{[0,T]^2}\text{Cov}\bigl[f(X_t)f(X_s)\bigr]dtds\\
&=\widetilde{V}(T)^{-1}\int_{[0,T]^2}\sum_{q=1}^\infty c_q^2q!\bigl(E[X_tX_s]\bigr)^qdtds=\sum_{q=1}^\infty c_q^2q!\widetilde{V}(T)^{-1}\int_{[0,T]^2}C(t-s)^qdtds\\
&=c_1^2\underbrace{\frac{\int_{[0,T]^2}C(t-s)dsdt}{\widetilde{V}(T)}}_{\rightarrow 2M \text{ by Proposition \ref{computations}}}+\sum_{q=2}^\infty c_q^2q!\underbrace{\frac{\int_{[0,T]^2}C(t-s)^qdsdt}{\widetilde{V}(T)}}_{\rightarrow 0,\ \forall \ q \text{ by Proposition \ref{computations}}}\xrightarrow[T\rightarrow\infty]{}2Mc_1^2
\end{align*}
All this proves that,
\begin{align*}
\boxed{\lim_{T\rightarrow\infty}\text{Var}[F_T]=2M\bigl(E[f(Z)Z]\bigr)^2\in(0,\infty)\ \text{ exists}}
\end{align*}

\end{itemize}
Since the conditions were satisfied, Theorem \ref{main2} is proved.
\end{proof}

\begin{preremark}\label{quantrate2}\rm
Notice that during the proof of this theorem was possible to establish an estimate for the convergence rate to normality like in remark \ref{quantrate}, i.e., if $\widetilde{Z}\sim\mathcal{N}(0,1)$ then as $T\to\infty$,
\begin{itemize}

\item If $TV(T)\rightarrow0$:
$$d_W\biggl(\frac{F_T}{\sqrt{\text{Var}[F_T]}},\widetilde{Z}\biggr)=O\biggl(\max\left\{V(T),TV(T)^2\biggl(\int_0^TV(x)dx\biggr)^{-1}\right\}^{\frac{1}{4}}\biggr)$$

\item If $TV(T)\nrightarrow0$ and $TV'(T)\rightarrow0$:
$$d_W\biggl(\frac{F_T}{\sqrt{\text{Var}[F_T]}},\widetilde{Z}\biggr)=O\bigl(\max\{V(T),TV'(T)\}^{\frac{1}{4}}\bigr)$$

\end{itemize}
\end{preremark}

\subsubsection{Examples}
According to Theorem \ref{main2} the only condition we need to check in order to apply the central limit theorem to $F_T$ is the decay rate of the covariance function for the underlying stationary Gaussian process $X_t$ (condition $\ast$). In fact, if the decay rate is $t^{-\alpha}$ then we can apply the CLT if $\alpha\in(0,1)\cup(1,2)$, because in the case $\alpha\in(1,2)$ the integral $\int_\RR C(t)$ is finite and in the case $\alpha\in(0,1)$ the same integral is infinite but $V(T)=T^{-\alpha}\in\mathcal{C}^1$ and $TV'(T)=-\alpha T^{-\alpha}\rightarrow0$ as $T\rightarrow\infty$.

\begin{enumerate}

\item {\bf Fractional Brownian Motion (FBM):}\\
In this particular case is known that the difference of FBM is a stationary process for all $H\in(0,1)$. So, if $B^H_t$ is FBM then $X_t=B^H_{t+1}-B^H_t$ is a centered Gaussian stationary process. Its covariance function is,
$$C_1(T)=E[X_TX_0]=E[(B^H_{T+1}-B^H_T)(B^H_1-B^H_0)]=\frac{\abs{T+1}^{2H}+\abs{T-1}^{2H}-2T^{2H}}{2}$$
Thus,
$$\lim_{T\rightarrow\infty}T^{2-2H}C_1(T)=\lim_{T\rightarrow\infty}T^2\frac{\bigl(1+\frac{1}{T}\bigr)^{2H}+\bigl(1-\frac{1}{T}\bigr)^{2H}-2}{2}=H(2H-1)=M\in(0,\infty)$$
Then, the decay rate of its covariance function is $t^{2H-2}$, so Theorem \ref{main} is applicable to the increments of FBM for all $H\in\bigl(0,\frac{1}{2}\bigr)\cup\bigl(\frac{1}{2},1\bigr)$, and $F_T\stackrel{\text{law}}{\longrightarrow}N\sim\mathcal{N}(0,\Sigma^2)$ as $T\rightarrow\infty$

\item {\bf Ornstein-Uhlenbeck Driven by FBM:}\\
This process is given as the solution of the following SDE:
\begin{align}\label{SDE1}
Y^H_t=Y^H_0-\lambda\int_0^tY^H_sds+\widetilde{\sigma} B^H_t
\end{align}
where $\widetilde{\sigma},\lambda>0$ are constants, and $B_t^H$ is a fractional Brownian motion with Hurst parameter $H\in\bigl(0,\frac{1}{2}\bigr)\cup\bigl(\frac{1}{2},1\bigr)$. This process is stationary due to the stationarity of the increments of the FBM (used in the first example). So $X_t=Y^H_t-E[Y_0^H]$ is a centered Gaussian stationary process.
In \cite{Cheridito}, the authors proved the following lemma,
\begin{lemma}\label{lema1}
Let $H\in\bigl(0,\frac{1}{2}\bigr)\cup\bigl(\frac{1}{2},1\bigr)$ and $N\in\NN$. Then as $T\rightarrow\infty$,
$$C_2(T)=E[X_TX_0]=\text{Cov}[Y^H_TY^H_0]=\frac{\widetilde{\sigma}^2}{2}\sum_{n=1}^N\lambda^{-2n}\biggl(\prod_{k=0}^{2n-1}(2H-k)\biggr)T^{2H-2n}+O(T^{2H-2N-2})$$
\end{lemma}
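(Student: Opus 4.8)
The plan is to solve the linear SDE (\ref{SDE1}) explicitly, pass to its stationary version, and then read off the asymptotic expansion of the autocovariance from an integral representation. First I would solve (\ref{SDE1}) by variation of constants — interpreting the fractional integral pathwise via integration by parts, since $B^H$ is not a semimartingale for $H\neq\frac{1}{2}$ — obtaining
$$Y^H_t=e^{-\lambda t}Y^H_0+\widetilde{\sigma}\int_0^te^{-\lambda(t-s)}dB^H_s.$$
Because the process is stationary (as already noted in the text), it suffices to compute the covariance of the stationary solution started from the infinite past,
$$X_t=\widetilde{\sigma}\int_{-\infty}^te^{-\lambda(t-s)}dB^H_s,$$
so that $C_2(T)=\text{Cov}[X_TX_0]$ is a function of $T$ alone.

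Next I would express $C_2(T)$ as an ordinary deterministic double integral. Using the representation above with kernels $f(u)=\widetilde{\sigma}e^{-\lambda(T-u)}1_{\{u\leq T\}}$ and $g(v)=\widetilde{\sigma}e^{\lambda v}1_{\{v\leq0\}}$, together with the covariance rule for Wiener integrals against $B^H$, one gets (for $H>\frac{1}{2}$, where the kernel is locally integrable), after the substitutions $u=T-a$, $v=-b$,
$$C_2(T)=\widetilde{\sigma}^2H(2H-1)\int_0^\infty\int_0^\infty e^{-\lambda(a+b)}\abs{T+b-a}^{2H-2}\,da\,db.$$
The large-$T$ asymptotics are governed by the region where $T+b-a>0$; there I would Taylor-expand $\abs{T+b-a}^{2H-2}=\sum_{j\geq0}\binom{2H-2}{j}T^{2H-2-j}(b-a)^j$ and integrate term by term against the exponential weights.

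The key computation is then the elementary moment $\int_0^\infty\int_0^\infty e^{-\lambda(a+b)}(b-a)^j\,da\,db$, which vanishes for odd $j$ by the $a\leftrightarrow b$ symmetry and equals $(2m)!\,\lambda^{-2m-2}$ for $j=2m$. Substituting this back and using the identities $\binom{2H-2}{2m}(2m)!=\prod_{k=2}^{2m+1}(2H-k)$ and $H(2H-1)=\frac{1}{2}(2H)(2H-1)$, the surviving terms combine to give exactly $\frac{\widetilde{\sigma}^2}{2}\lambda^{-2n}\bigl(\prod_{k=0}^{2n-1}(2H-k)\bigr)T^{2H-2n}$ upon setting $n=m+1$, which is the claimed series (note that the leading $n=1$ term reproduces the known sign of long-range dependence: positive for $H>\frac{1}{2}$ and negative for $H<\frac{1}{2}$).

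The main obstacle is rigor rather than algebra. The representation with the factor $H(2H-1)$ and kernel $\abs{\cdot}^{2H-2}$ is only absolutely convergent for $H>\frac{1}{2}$; for $H\in(0,\frac{1}{2})$ the factor is negative and the kernel non-integrable, so the formula must be reinterpreted — most cleanly through the spectral representation
$$C_2(T)=c_H\widetilde{\sigma}^2\int_{\RR}e^{i\xi T}\abs{\xi}^{1-2H}(\lambda^2+\xi^2)^{-1}\,d\xi,$$
reading the large-$T$ behaviour from the singularity of the integrand at $\xi=0$ via $(\lambda^2+\xi^2)^{-1}=\lambda^{-2}\sum_n(-1)^n\lambda^{-2n}\xi^{2n}$ and the Fourier transform of $\abs{\xi}^{1-2H+2n}$, which yields the same coefficients uniformly across both ranges of $H$. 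In either route the final point to control is the remainder: after truncating at order $N$, the tail integrates to a contribution of order $T^{2H-2N-2}$, and bounding this uniformly is precisely what produces the stated $O(T^{2H-2N-2})$ error. The detailed remainder estimate, valid for all $H\in\bigl(0,\frac{1}{2}\bigr)\cup\bigl(\frac{1}{2},1\bigr)$, is the technical content carried out in \cite{Cheridito}.
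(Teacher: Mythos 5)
The first thing to note is that the paper does not prove Lemma \ref{lema1} at all: it is imported verbatim from \cite{Cheridito} (``In \cite{Cheridito}, the authors proved the following lemma''), so there is no internal proof to compare your attempt against. Judged on its own merits, your sketch is sound in outline and the algebra is correct: the variation-of-constants solution of (\ref{SDE1}), the stationary moving-average representation $X_t=\widetilde{\sigma}\int_{-\infty}^te^{-\lambda(t-s)}dB^H_s$, the double-integral formula with kernel $H(2H-1)\abs{\cdot}^{2H-2}$ for $H>\frac{1}{2}$, the moment identity $\int_0^\infty\int_0^\infty e^{-\lambda(a+b)}(b-a)^{2m}\,da\,db=(2m)!\,\lambda^{-2m-2}$, and the recombination $H(2H-1)\binom{2H-2}{2m}(2m)!=\frac{1}{2}\prod_{k=0}^{2m+1}(2H-k)$ all check out and reproduce the stated coefficients with $n=m+1$, including the correct order $T^{2H-2N-2}$ of the first omitted term; the sign check on the $n=1$ term is also consistent with the limit $T^{2-2H}C_2(T)\to H(2H-1)\widetilde{\sigma}^2/\lambda^2$ used later in the paper.

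However, as a standalone proof there is a genuine gap, which you only partly acknowledge. The term-by-term integration step is not legitimate as written: the binomial series for $\abs{T+b-a}^{2H-2}$ converges only on $\{\abs{b-a}<T\}$, while your integral runs over the whole quadrant; the complementary region $\{a\geq T+b\}$ has positive measure and carries the singularity of the kernel at $a=T+b$, so its contribution must be estimated separately (it is in fact exponentially small in $T$, but that requires an argument, and on the convergent region one still needs a uniform bound on the Taylor remainder --- e.g.\ a finite expansion with controlled remainder after splitting at $\abs{b-a}\leq T/2$). Likewise, the spectral route you propose for $H\in\bigl(0,\frac{1}{2}\bigr)$ requires localizing the expansion of $(\lambda^2+\xi^2)^{-1}$ near $\xi=0$ and controlling the smooth part of the integrand away from the origin. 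Since you defer exactly these remainder estimates to \cite{Cheridito}, your proposal is in the end a coefficient computation plus a citation for the error bound --- which is the actual content of the lemma. That is defensible here only because the paper itself does nothing more than cite \cite{Cheridito}, but you should state explicitly that what you have produced is a derivation of the coefficients, not a self-contained proof of the asymptotic expansion.
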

which basically tells us that for all $H\in\bigl(0,\frac{1}{2}\bigr)\cup\bigl(\frac{1}{2},1\bigr)$ the decay rate of $C_2(T)$ is very similar to the decay rate of $C_1(T)$ (the covariance of the FBM increments). Lemma \ref{lema1} implies that,
$$\lim_{T\rightarrow\infty}T^{2-2H}C_2(T)=\frac{H(2H-1)\widetilde{\sigma}^2}{\lambda^2}=M\in(0,\infty)$$
As in example 1, due to this rate of decreasing, Theorem \ref{main} is applicable to this process for all $H\in\bigl(0,\frac{1}{2}\bigr)\cup\bigl(\frac{1}{2},1\bigr)$, and $F_T\stackrel{\text{law}}{\longrightarrow}N\sim\mathcal{N}(0,\Sigma^2)$ as $T\rightarrow\infty$

\end{enumerate}
According to remarks \ref{quantrate} and \ref{quantrate2} we can tell that for the above examples $F_T$ has a rate of convergence to normality of at least $T^{(\frac{1\vee (2H)}{4}-\frac{1}{2})}$ for all $H\in\bigl(0,\frac{1}{2}\bigr)\cup\bigl(\frac{1}{2},1\bigr)$, that is, for $\widetilde{Z}\sim\mathcal{N}(0,1)$,
$$d_W\biggl(\frac{F_T}{\sqrt{\text{Var}[F_T]}},\widetilde{Z}\biggr)=O\bigl(T^{(\frac{1\vee (2H)}{4}-\frac{1}{2})}\bigr)\ \text{ as }\ T\to\infty$$

\subsection{The Poisson space case:\\Simulation of small jumps}

Notice that in this space the measure $\mu$ has support contained in $\RR^+\times\RR_0$, and the formula obtained from the general case will be
\begin{align}\label{poissoncase}
d_W(F,N)\leq E\abs{1-\Lqprod{\boldsymbol{D}F}{-\boldsymbol{D}L^{-1}F}{\mu}}+E\bigl[\Lqprod{\abs{x(\boldsymbol{D}F)^2}}{\abs{\boldsymbol{D}L^{-1}F}}{\mu}\bigr]
\end{align}
\begin{preremark}\label{equivalency}
This particular case was worked out by G. Peccati, J. L. Sol$\acute{\text{e}}$, M. S. Taqqu and F. Utzet in \cite{Peccati}, with several examples (and conditions) given. They obtained the following inequality,
\begin{align}\label{poissoncase2}
d_W(F,N)\leq E\abs{1-\Lqprod{\widetilde{\boldsymbol{D}}F}{-\widetilde{\boldsymbol{D}}L^{-1}F}{\widetilde{\mu}}}+E\bigl[\Lqprod{\abs{(\widetilde{\boldsymbol{D}}F)^2}}{\abs{\widetilde{\boldsymbol{D}}L^{-1}F}}{\widetilde{\mu}}\bigr]
\end{align}
where the factor $x$ of the second term is missing. It is important to stress that both formulas are equivalent and that the difference lies in the definition of the Malliavin derivative and the random measure. In fact, the definition of Malliavin derivative used by them is $\widetilde{\boldsymbol{D}}_zF=F(\omega_z)-F(\omega)$ and the random measure is $\int_Ad\widetilde{N}(t,x)$ (instead of $\int_Axd\widetilde{N}(t,x)$ as in our case). So, the kernels of both chaos decomposition will differ by a factor of $x$, that is, if $F$ has chaos representation as in (\ref{chaosrep1}) for our framework with kernels $f_q$, then
$$F=\sum_{q=0}^\infty \widetilde{I}_q(xf_q)$$
in their framework ($\widetilde{I}_q$ is the corresponding extension with their random measure). For example, the Ornstein-Uhlenbeck process (with inital value 0) is given by
$$Y_t=\int_0^te^{-\lambda(t-s)}xd\widetilde{N}(s,x)=I_1\bigl(e^{-\lambda(t-s)}\bigr)=\widetilde{I}_1\bigl(xe^{-\lambda(t-s)}\bigr)$$
and it is easy to check that $\widetilde{\boldsymbol{D}}Y_t=xe^{-\lambda(t-s)}=x\boldsymbol{D}Y_t$. Also, since $d\mu(t,x)=x^2d\widetilde{\mu}(t,x)$, we have that
$$\Lqprod{\widetilde{\boldsymbol{D}}F}{-\widetilde{\boldsymbol{D}}L^{-1}F}{\widetilde{\mu}}=\Lqprod{\boldsymbol{D}F}{-\boldsymbol{D}L^{-1}F}{\mu}$$
and
$$\Lqprod{\abs{(\widetilde{\boldsymbol{D}}F)^2}}{\abs{\widetilde{\boldsymbol{D}}L^{-1}F}}{\widetilde{\mu}}=\Lqprod{\abs{x(\boldsymbol{D}F)^2}}{\abs{\boldsymbol{D}L^{-1}F}}{\mu}$$
\end{preremark}

\begin{preremark}\label{multidimensional}
In \cite{Peccati2} the authors accomplish the remarkable generalization of this bound to the multi-dimensional case. Their theorem reads as follows,
\begin{lemma}\label{multidimtheo}
Fix $d\geq2$ and let $C=\{C(i,j)\}_{0\leq i,j\leq d}$ be a $d\times d$ positive definite matrix. Suppose that $N\sim\mathcal{N}_d(0,C)$ and that $F=(F_1,\dots,F_d)$ is a $\RR^d$-valued random vector such that $E[F_i]=0$ and $F_i\in$Dom$\widetilde{\boldsymbol{D}}$ for all $i=1,\dots,d$. Then,\footnote{see \cite{Peccati2} for the definitions}
\begin{align*}
d_2(F,N)\leq& \opnorm{C^{-1}}\opnorm{C}^{\frac{1}{2}}E\left[\sum_{i,j}^d\bigl(C(i,j)-\Lqprod{\widetilde{\boldsymbol{D}}F_i}{-\widetilde{\boldsymbol{D}}L^{-1}F_j}{\widetilde{\mu}}\bigr)^2\right]^{\frac{1}{2}}\\
&+\frac{\sqrt{2\pi}}{8}\opnorm{C^{-1}}^{\frac{3}{2}}\opnorm{C}E\left[\Lqprod{\left(\sum_{i=1}^d\abs{\widetilde{\boldsymbol{D}}F_i}\right)^2}{\sum_{i=1}^d\abs{\widetilde{\boldsymbol{D}}L^{-1}F_i}}{\widetilde{\mu}}\right]
\end{align*}
\end{lemma}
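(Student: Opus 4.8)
The plan is to combine multivariate Stein's method with the Malliavin calculus on the Poisson space, following the one-dimensional scheme behind Theorem \ref{main} but carefully tracking the extra second-order terms produced by the difference operator $\widetilde{\boldsymbol{D}}$. First I would fix a test function $h$ in the smooth class defining $d_2$ and introduce the Stein equation for $N\sim\mathcal{N}_d(0,C)$, namely
$$\sum_{i,j=1}^d C(i,j)\,\frac{\partial^2 f}{\partial x_i\partial x_j}(x)-\sum_{i=1}^d x_i\,\frac{\partial f}{\partial x_i}(x)=h(x)-E[h(N)].$$
The solution $f=f_h$ is given by the usual semigroup (Mehler) integral representation, and the key analytic input is the pair of uniform bounds on its derivatives: the Hessian obeys $\sup_x\opnorm{\nabla^2 f_h(x)}\leq\opnorm{C^{-1}}\opnorm{C}^{\frac12}$, while the third-derivative tensor is controlled by $\frac{\sqrt{2\pi}}{4}\opnorm{C^{-1}}^{\frac32}\opnorm{C}$ (the Taylor-remainder factor $\tfrac12$ below turning this into the $\frac{\sqrt{2\pi}}{8}$ of the statement). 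Evaluating the equation at $F$ and taking expectations reduces the task to estimating $E\bigl[\sum_{i,j}C(i,j)\,\partial_{ij}f(F)-\sum_i F_i\,\partial_i f(F)\bigr]$.

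Second, I would apply integration by parts on the Poisson space. Since $E[F_i]=0$ we have $F_i=\delta(-\widetilde{\boldsymbol{D}}L^{-1}F_i)$, so adjointness of $\delta$ and $\widetilde{\boldsymbol{D}}$ gives
$$E\bigl[F_i\,\partial_i f(F)\bigr]=E\bigl[\Lqprod{\widetilde{\boldsymbol{D}}\,\partial_i f(F)}{-\widetilde{\boldsymbol{D}}L^{-1}F_i}{\widetilde{\mu}}\bigr].$$
The crucial step is then the chain rule for $\widetilde{\boldsymbol{D}}$. Because $\widetilde{\boldsymbol{D}}_z$ is a genuine difference operator, writing $F+\widetilde{\boldsymbol{D}}_zF=(F_j+\widetilde{\boldsymbol{D}}_zF_j)_{j}$ we have $\widetilde{\boldsymbol{D}}_z\,\partial_i f(F)=\partial_i f(F+\widetilde{\boldsymbol{D}}_zF)-\partial_i f(F)$, and a multivariate Taylor expansion with second-order remainder yields
$$\widetilde{\boldsymbol{D}}_z\,\partial_i f(F)=\sum_{j=1}^d \partial_{ij}f(F)\,\widetilde{\boldsymbol{D}}_zF_j+R_i(z),$$
where the remainder is bounded pointwise by $\tfrac12\sup\opnorm{\nabla^3 f}\bigl(\sum_j\abs{\widetilde{\boldsymbol{D}}_zF_j}\bigr)^2$. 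This remainder is precisely the term absent in the Wiener chain rule and is the origin of the second summand in the bound.

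Third, I would recombine the first-order part of the chain rule with the Hessian term of the Stein equation. Summing over $i$, the first-order contributions telescope into
$$E\Bigl[\sum_{i,j}\partial_{ij}f(F)\bigl(C(i,j)-\Lqprod{\widetilde{\boldsymbol{D}}F_i}{-\widetilde{\boldsymbol{D}}L^{-1}F_j}{\widetilde{\mu}}\bigr)\Bigr],$$
and bounding this bilinear pairing against the Hessian by Cauchy--Schwarz (in the matrix inner product indexed by $(i,j)$) together with the uniform estimate $\sup_x\opnorm{\nabla^2 f_h(x)}\leq\opnorm{C^{-1}}\opnorm{C}^{\frac12}$ produces the first term of the stated inequality. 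The remainder contributions $\sum_i E\bigl[\Lqprod{R_i(z)}{-\widetilde{\boldsymbol{D}}L^{-1}F_i}{\widetilde{\mu}}\bigr]$, estimated with the third-derivative bound and the elementary inequality $\bigl(\sum_j\abs{\widetilde{\boldsymbol{D}}_zF_j}\bigr)^2\sum_i\abs{\widetilde{\boldsymbol{D}}_zL^{-1}F_i}$, give the second term with constant $\frac{\sqrt{2\pi}}{8}\opnorm{C^{-1}}^{\frac32}\opnorm{C}$. Taking the supremum over admissible $h$ completes the argument.

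The main obstacle I anticipate is twofold, both parts being specific to the Poisson (difference-operator) setting. Sharpening the multivariate Stein-solution bounds, in particular the third-derivative estimate with the correct dependence on $\opnorm{C^{-1}}$ and $\opnorm{C}$, requires careful differentiation of the Mehler representation. More delicate is the control of the remainder $R_i(z)$: unlike the Wiener chain rule, where the derivative is exact, here one must justify the second-order expansion of $\widetilde{\boldsymbol{D}}_z$ uniformly in $z$ and then verify that the resulting third-order Malliavin functional is $\widetilde{\mu}$-integrable, which is exactly where the hypothesis $F_i\in\mathrm{Dom}\,\widetilde{\boldsymbol{D}}$ and the structure of $\widetilde{\mu}$ enter.
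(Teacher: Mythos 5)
The paper itself contains no proof of this lemma: it is quoted (inside Remark \ref{multidimensional}) directly from \cite{Peccati2}, and your proposal reconstructs essentially the argument given in that reference --- the multivariate Stein equation with its Mehler-type solution $f_h$, the second- and third-derivative bounds on $f_h$, the Poisson integration by parts $E[F_i\partial_i f(F)]=E\bigl[\Lqprod{\widetilde{\boldsymbol{D}}\partial_i f(F)}{-\widetilde{\boldsymbol{D}}L^{-1}F_i}{\widetilde{\mu}}\bigr]$, and the exact first-order Taylor expansion of the difference operator whose quadratic remainder produces the second term with constant $\frac{\sqrt{2\pi}}{8}=\frac{1}{2}\cdot\frac{\sqrt{2\pi}}{4}$. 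The one imprecision to repair is the Cauchy--Schwarz step: pairing the Hessian against the matrix $\bigl(C(i,j)-\Lqprod{\widetilde{\boldsymbol{D}}F_i}{-\widetilde{\boldsymbol{D}}L^{-1}F_j}{\widetilde{\mu}}\bigr)_{i,j}$ in the Hilbert--Schmidt inner product requires the estimate $\sup_x\bigl\|\nabla^2 f_h(x)\bigr\|_{\text{HS}}\leq\opnorm{C^{-1}}\opnorm{C}^{\frac{1}{2}}$ (the form actually established and used in \cite{Peccati2}, following \cite{Nourdinmult} and Chatterjee--Meckes), not the operator-norm bound you invoke, which under Hilbert--Schmidt Cauchy--Schwarz would cost a spurious factor of $\sqrt{d}$.
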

From this inequality they conclude a useful result for the first chaos case.
\begin{lemma}\label{multdimcond}
For a fixed $d\geq2$, let $N\sim\mathcal{N}_d(0,C)$, with $C$ positive definite, and let
$$F_n=(F_{n,1},\dots,F_{n,d})=\bigl(\widetilde{I}_1\bigl(\widetilde{h}_{n,1}\bigr),\dots,\widetilde{I}_1\bigl(\widetilde{h}_{n,d}\bigr)\bigr),\ n\geq1$$
Let $K_n$ be the covariance matrix of $F_n$, that is, $K_n(i,j)=E\bigl[\widetilde{I}_1\bigl(\widetilde{h}_{n,i}\bigr)\widetilde{I}_1\bigl(\widetilde{h}_{n,j}\bigr)\bigr]=\Lqprod{\widetilde{h}_{n,i}}{\widetilde{h}_{n,j}}{\widetilde{\mu}}$. Then $F_n\stackrel{law}{\longrightarrow}N$ if $K_n(i,j)\xrightarrow[n\to\infty]{} C(i,j)$ and $\Lqnorm{\abs{\widetilde{h}_{n,i}}^{\frac{3}{2}}}{\widetilde{\mu}}^2\xrightarrow[n\to\infty]{}0$ for all $i,j=1\dots d$.
\end{lemma}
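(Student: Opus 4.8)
The plan is to apply Lemma \ref{multidimtheo} directly to the sequence $F_n$ and to show that both terms in its upper bound for $d_2(F_n,N)$ vanish as $n\to\infty$. The essential simplification is that every component $F_{n,i}=\widetilde{I}_1(\widetilde{h}_{n,i})$ lives in the first chaos, so its Malliavin derivative is a deterministic function and the pseudo-inverse $L^{-1}$ acts on it by a scalar. Consequently all the inner products occurring in the bound are deterministic and the outer expectations become trivial.

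First I would record the Malliavin objects. Arguing as in (\ref{malliavinderivative2}) but in the $\widetilde{\boldsymbol{D}}$ framework, the derivative of a first-chaos element is its kernel, so $\widetilde{\boldsymbol{D}}_zF_{n,i}=\widetilde{h}_{n,i}(z)$. Since $L^{-1}$ multiplies the $q$-th chaos by $-1/q$ and here $q=1$, one has $L^{-1}F_{n,i}=-F_{n,i}$, whence $-\widetilde{\boldsymbol{D}}L^{-1}F_{n,i}=\widetilde{\boldsymbol{D}}F_{n,i}=\widetilde{h}_{n,i}$ and $\abs{\widetilde{\boldsymbol{D}}L^{-1}F_{n,i}}=\abs{\widetilde{h}_{n,i}}$.

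Then I would dispatch the first term. With the above, $\Lqprod{\widetilde{\boldsymbol{D}}F_{n,i}}{-\widetilde{\boldsymbol{D}}L^{-1}F_{n,j}}{\widetilde{\mu}}=\Lqprod{\widetilde{h}_{n,i}}{\widetilde{h}_{n,j}}{\widetilde{\mu}}=K_n(i,j)$ is deterministic, so the first summand of the bound equals $\opnorm{C^{-1}}\opnorm{C}^{1/2}\bigl(\sum_{i,j}(C(i,j)-K_n(i,j))^2\bigr)^{1/2}$, which tends to $0$ by the hypothesis $K_n(i,j)\to C(i,j)$.

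For the second term, using $\abs{\widetilde{\boldsymbol{D}}F_{n,i}}=\abs{\widetilde{\boldsymbol{D}}L^{-1}F_{n,i}}=\abs{\widetilde{h}_{n,i}}$, the inner product collapses to $\int\bigl(\sum_i\abs{\widetilde{h}_{n,i}}\bigr)^3 d\widetilde{\mu}$. The one mildly technical point is to control this mixed third moment by the individual ones: by convexity of $x\mapsto x^3$ on $[0,\infty)$ one has $\bigl(\sum_{i=1}^d a_i\bigr)^3\leq d^2\sum_{i=1}^d a_i^3$ for nonnegative $a_i$, so $\int\bigl(\sum_i\abs{\widetilde{h}_{n,i}}\bigr)^3 d\widetilde{\mu}\leq d^2\sum_i\int\abs{\widetilde{h}_{n,i}}^3 d\widetilde{\mu}=d^2\sum_i\Lqnorm{\abs{\widetilde{h}_{n,i}}^{\frac{3}{2}}}{\widetilde{\mu}}^2$, which tends to $0$ by the second hypothesis. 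Combining the two estimates gives $d_2(F_n,N)\to0$, and since $d_2$ metrizes convergence in distribution this yields $F_n\stackrel{law}{\longrightarrow}N$. There is no genuine obstacle here: the analytic substance is already packaged in Lemma \ref{multidimtheo}, and the remaining work is only the first-chaos simplification together with the elementary power-mean bound.
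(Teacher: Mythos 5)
Your proof is correct and takes the route the paper itself indicates: Lemma \ref{multdimcond} is presented there as a direct consequence of Lemma \ref{multidimtheo} (with the details deferred to \cite{Peccati2}), and your first-chaos simplifications ($\widetilde{\boldsymbol{D}}F_{n,i}=\widetilde{h}_{n,i}$, $-L^{-1}F_{n,i}=F_{n,i}$, so both terms of the bound become deterministic) together with the power-mean estimate $\bigl(\sum_{i}a_i\bigr)^3\leq d^2\sum_{i}a_i^3$ are exactly that deduction. One minor wording point: you only need the one-sided implication that $d_2(F_n,N)\to 0$ forces convergence in law (not that $d_2$ metrizes it), which is standard for this class of test functions and is what is actually used.
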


By remark \ref{equivalency}, we know that it is possible to rewrite these conditions in terms of ``our'' framework. Indeed, denoting $\widetilde{h}_{n,i}=xh_{n,i}$, it follows that $K_n(i,j)=\Lqprod{\widetilde{h}_{n,i}}{\widetilde{h}_{n,j}}{\widetilde{\mu}}=\Lqprod{h_{n,i}}{h_{n,j}}{\mu}$ and $\Lqnorm{\abs{\widetilde{h}_{n,i}}^{\frac{3}{2}}}{\widetilde{\mu}}^2=\Lqnorm{\abs{x}^{\frac{1}{2}}\abs{h_{n,i}}^{\frac{3}{2}}}{\mu}^2$.
\end{preremark}

In \cite{Asmussen}, the authors proved that the small jumps from a L$\acute{\text{e}}$vy process can be approximated by Brownian motion. Before this theorem is stated, some notation needs to be introduced: Let $X_t$ be a L$\acute{\text{e}}$vy process with triplet $(b,\sigma^2,\nu)$. To isolate the small jumps, consider the variance $\sigma(\epsilon)^2=\int_{\{\abs{x}\leq \epsilon\}}x^2d\nu(x)$ and the small jumps process $X_t^{\epsilon}=\sigma(\epsilon)^{-1}\int\hspace{-.2cm}\int_{[0,t]\times\{\abs{x}\leq \epsilon\}}xd\widetilde{N}(s,x)$. So $X_t=b_{\epsilon}t+\sigma W_t+N_t^{\epsilon}+\sigma(\epsilon)X_t^{\epsilon}$ where $N_t^{\epsilon}=t\sum_{s<t}\Delta X_s1_{\{\abs{\Delta X_s}\geq\epsilon\}}$ is the part of (finitely many) jumps bigger than $\epsilon$. Their theorem reads as follows,
\begin{lemma}\label{levycase}
$X_t^{\epsilon}\stackrel{law}{\longrightarrow}\widetilde{W}_t$ (Brownian motion independent of $W_t$ and $N_t^{\epsilon}$ for all $\epsilon$) as $\epsilon\rightarrow0$ if and only if for each $\kappa>0$, $\sigma\bigl(\kappa\sigma(\epsilon)\wedge\epsilon\bigr)\sim\sigma(\epsilon)$ as $\epsilon\rightarrow0$
\end{lemma}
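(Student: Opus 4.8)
The plan is to realize $X^\epsilon$ as a centered pure-jump Lévy process whose one-dimensional law is infinitely divisible, and then to reduce the whole statement to the classical central limit theorem for infinitely divisible laws, inside which the condition of the lemma will appear as a Lindeberg-type condition on the rescaled Lévy measure. Throughout I will exploit that all the processes involved have stationary independent increments, so that convergence of one-dimensional marginals propagates to the process level.

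First I would pin down the Lévy triplet of $X^\epsilon$. Since $X_t^\epsilon=\sigma(\epsilon)^{-1}\dint_{[0,t]\times\{\abs{x}\leq\epsilon\}}x\,d\widetilde N(s,x)$, its jumps of size $y$ arise from original jumps $x=\sigma(\epsilon)y$ with $\abs{x}\leq\epsilon$; hence $X^\epsilon$ has no Gaussian component, mean zero (it is compensated), and Lévy measure $\rho_\epsilon$ determined by $\int g\,d\rho_\epsilon=\int_{\{\abs{x}\leq\epsilon\}}g(x/\sigma(\epsilon))\,d\nu(x)$. From the definition of $\sigma(\epsilon)^2$ one checks at once that $\int y^2\,d\rho_\epsilon(y)=\sigma(\epsilon)^{-2}\int_{\{\abs{x}\leq\epsilon\}}x^2\,d\nu(x)=1$, so $X^\epsilon_t$ already carries variance $t$ and the only remaining object to control is the mass of $\rho_\epsilon$ away from the origin.

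Second, I would invoke the convergence criterion for infinitely divisible laws: a family of centered, variance-one, pure-jump infinitely divisible distributions with Lévy measures $\rho_\epsilon$ converges weakly to $\mathcal N(0,1)$ if and only if $\int_{\{\abs{y}>\tau\}}y^2\,d\rho_\epsilon(y)\to0$ for every $\tau>0$. The sufficiency is transparent from the Lévy–Khintchine exponent: writing $\log\phi_\epsilon(u)+\tfrac{u^2}{2}=\int(e^{iuy}-1-iuy+\tfrac{u^2y^2}{2})\,d\rho_\epsilon(y)$ and splitting at $\abs{y}=\tau$, the inner part is $O(\abs{u}^3\tau)$ via $\abs{e^{iuy}-1-iuy+\tfrac{u^2y^2}{2}}\leq\tfrac{\abs{u}^3\abs{y}^3}{6}\leq\tfrac{\abs{u}^3\tau}{6}y^2$ together with $\int y^2\,d\rho_\epsilon=1$, while the outer part is dominated by $u^2\int_{\{\abs{y}>\tau\}}y^2\,d\rho_\epsilon\to0$; sending $\epsilon\to0$ and then $\tau\to0$ gives $\phi_\epsilon(u)\to e^{-u^2/2}$. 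The necessity is the harder half and is where I expect the main obstacle to lie: it requires the general Gnedenko-type theorem (uniqueness of the Lévy–Khintchine representation together with tightness) to rule out any surviving jump component in the limit. Once the one-dimensional convergence is secured, the stationary independent increments upgrade it to convergence of finite-dimensional distributions and, by the standard functional limit theorems for Lévy processes, to convergence in law on path space; the independence of the limit $\widetilde W$ from $W$ and $N^\epsilon$ is inherited from the Lévy–Itô decomposition, because $X^\epsilon$ is built from the jump measure on $\{\abs{x}\leq\epsilon\}$, which is disjoint both from the Gaussian part $W$ and from the large-jump part $N^\epsilon$ living on $\{\abs{x}>\epsilon\}$.

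Finally, I would translate the Lindeberg condition into the asymptotic equivalence of the lemma. Changing variables back through $x=\sigma(\epsilon)y$ gives, for each fixed $\kappa>0$,
$$\int_{\{\abs{y}>\kappa\}}y^2\,d\rho_\epsilon(y)=\sigma(\epsilon)^{-2}\int_{\{\kappa\sigma(\epsilon)<\abs{x}\leq\epsilon\}}x^2\,d\nu(x)=1-\frac{\sigma(\kappa\sigma(\epsilon)\wedge\epsilon)^2}{\sigma(\epsilon)^2},$$
where the minimum absorbs the degenerate case $\kappa\sigma(\epsilon)>\epsilon$ (in which the integration region is empty) automatically. This quantity tends to $0$ precisely when $\sigma(\kappa\sigma(\epsilon)\wedge\epsilon)\sim\sigma(\epsilon)$, which is exactly the condition stated in the lemma; running the equivalence in both directions over all $\kappa>0$ completes the argument.
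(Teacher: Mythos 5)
There is an important contextual point: the paper offers no proof of Lemma \ref{levycase} at all. It is imported verbatim from Asmussen and Rosi\'nski \cite{Asmussen} as background for Theorem \ref{smalljumpsconv}, so the only meaningful benchmark is the original argument in that reference, and your proposal is essentially a faithful reconstruction of it. The structure is correct: $X^\epsilon$ is a centered, pure-jump L\'evy process whose L\'evy measure $\rho_\epsilon$ is the push-forward of $\nu$ restricted to $\{\abs{x}\leq\epsilon\}$ under $x\mapsto x/\sigma(\epsilon)$, normalized so that $\int y^2\,d\rho_\epsilon(y)=1$; the Gnedenko--Kolmogorov criterion for convergence of infinitely divisible laws reduces normality of the limit to the Lindeberg-type condition $\int_{\{\abs{y}>\kappa\}}y^2\,d\rho_\epsilon(y)\to0$ for every $\kappa>0$; and your identity
$$\int_{\{\abs{y}>\kappa\}}y^2\,d\rho_\epsilon(y)=1-\frac{\sigma\bigl(\kappa\sigma(\epsilon)\wedge\epsilon\bigr)^2}{\sigma(\epsilon)^2}$$
is exactly right (including the observation that the minimum absorbs the degenerate case $\kappa\sigma(\epsilon)>\epsilon$), which makes the equivalence with $\sigma\bigl(\kappa\sigma(\epsilon)\wedge\epsilon\bigr)\sim\sigma(\epsilon)$ immediate. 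The sufficiency computation with the characteristic exponent is also sound, and your use of the martingale form $\psi_\epsilon(u)=\int(e^{iuy}-1-iuy)\,d\rho_\epsilon(y)$ is legitimate precisely because $\rho_\epsilon$ has a finite second moment.

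Two steps that you flag but do not carry out deserve one sentence each in a full write-up. For the necessity, the convergence-of-triplets theorem yields vague convergence $\rho_\epsilon\to0$ away from the origin together with $\lim_{\tau\to0}\limsup_{\epsilon\to0}\abs{\int_{\{\abs{y}\leq\tau\}}y^2\,d\rho_\epsilon(y)-1}=0$; since $y^2$ is unbounded, vague convergence alone does not control the tail second moment, and it is your normalization $\int y^2\,d\rho_\epsilon(y)=1$, combined with the monotonicity of $\tau\mapsto\int_{\{\abs{y}>\tau\}}y^2\,d\rho_\epsilon(y)$, that converts the iterated limit into the statement for each fixed $\kappa$; this is worth spelling out because it is the one place where the exact variance normalization is indispensable. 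For the process level, convergence of the characteristic functions upgrades to $\psi_\epsilon(u)\to-u^2/2$ locally uniformly by taking distinguished logarithms (characteristic functions of infinitely divisible laws never vanish), hence all marginals and finite-dimensional laws converge, and tightness in the Skorokhod space is the standard result for processes with independent increments; the independence claim should be phrased for a fixed $\epsilon_0$: for $\epsilon<\epsilon_0$ the triple $(X^\epsilon,W,N^{\epsilon_0})$ has independent components by the L\'evy--It\^o decomposition, so its joint law converges to the product of the limiting laws. These are routine completions rather than gaps; the proposal is correct.
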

They also proved that the condition $\sigma\bigl(\kappa\sigma(\epsilon)\wedge\epsilon\bigr)\sim\sigma(\epsilon)$ as $\epsilon\rightarrow0$ is implied by $\lim_{\epsilon\rightarrow0}\frac{\sigma_t(\epsilon)}{\epsilon}=\infty$. The importance of this lemma is that $X_t\stackrel{\text{law}}{\approx}b_{\epsilon} t+\sqrt{\sigma^2+\sigma(\epsilon)^2}W_t+N_t^{\epsilon}$ (for $\epsilon$ small enough), and the latter is quite easy to simulate.

\bigskip

The objective of this subsection is to extend this kind of result to functionals that are not necessarily L$\acute{\text{e}}$vy. To focus just on the jump part, let's assume, without loss of generality, that the triplet of the L$\acute{\text{e}}$vy process $X_t$ is $(0,0,\nu)$. Then, it can be written as $X_t=I_1(1_{[0,t]})=\int\hspace{-.2cm}\int_{[0,t]\times\RR}xd\widetilde{N}(s,x)$. Define $\widetilde{\sigma}_t(\epsilon)^2=\Lqnorm{h_t1_{[0,t]\times\{\abs{x}<\epsilon\}}}{\mu}=\int\hspace{-.2cm}\int_{[0,t]\times\{\abs{x}<\epsilon\}}h_t(s,x)^2x^2d\nu(x)ds$ for some $h_t\in L^2_{\mu}$ and consider the processes $\widetilde{X}_t=I_1\bigl(h_t1_{[0,t]}\bigr)$, $\widetilde{X}_t^{\epsilon}=I_1\bigl(\widetilde{\sigma}_t(\epsilon)^{-1}h_t1_{[0,t]\times\{\abs{x}<\epsilon\}}\bigr)$. Similarly, define $\widehat{\sigma}(\epsilon)^2=\int_{\{\abs{x}<\epsilon\}}x^2d\nu(x)$ and $\widehat{X}_t^{\epsilon}=I_1\bigl(\widehat{\sigma}(\epsilon)^{-1}h_t1_{[0,t]\times\{\abs{x}<\epsilon\}}\bigr)$. This means that $\widetilde{X}_t=N_t^{\epsilon}+\widehat{\sigma}(\epsilon)\widehat{X}_t^{\epsilon}$ where $N_t^\epsilon=I_1\bigl(h_t1_{[0,t]\times\{\abs{x}\geq\epsilon\}}\bigr)$ has finitely many jumps. Let $\widehat{W}$ be an isonormal Gaussian process with covariance structure given by $E\bigl[\widehat{W}(f)\widehat{W}(g)\bigr]=\int_{\RR^+}f(s)g(s)ds$.
\begin{theorem}\label{smalljumpsconv}
For a fixed $t$, $\widetilde{X}_t^{\epsilon}\stackrel{law}{\longrightarrow}\widetilde{Z}\sim\mathcal{N}(0,1)$ as $\epsilon\rightarrow0$ if
\begin{align}\label{smalljumpsconv0}
\frac{\dint_{[0,t]\times\{\abs{x}<\epsilon\}}\abs{xh_t(s,x)}^3d\nu(x)ds}{\widetilde{\sigma}_t(\epsilon)^3}\xrightarrow[\epsilon\rightarrow0]{}0
\end{align}
Moreover, suppose that $h_t(s,x)=h_t(s)$. Then $\widehat{X}_t^{\epsilon}\stackrel{law}{\longrightarrow}\widehat{W}(h_t)$ if
\begin{align}\label{smalljumpsconv1}
\frac{\int_{\{\abs{x}<\epsilon\}}\abs{x}^3d\nu(x)}{\widehat{\sigma}(\epsilon)^3}\xrightarrow[\epsilon\rightarrow0]{}0
\end{align}
\end{theorem}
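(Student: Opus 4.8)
The plan is to apply the Poisson-space upper bound (\ref{poissoncase}) directly, exploiting that both $\widetilde{X}_t^{\epsilon}$ and $\widehat{X}_t^{\epsilon}$ live in the first It\^o chaos, where the Malliavin operators act transparently. For a first-chaos element $F=I_1(g)$ one has $\boldsymbol{D}_zF=g(z)$ and $L^{-1}F=-F$, so that $-\boldsymbol{D}L^{-1}F=\boldsymbol{D}F=g$; in particular the two slots of each inner product in (\ref{poissoncase}) coincide, and since $g$ is deterministic every expectation is trivial. Throughout I will also use $d\mu(s,x)=x^2\,d\nu(x)\,ds$ on the jump part, together with the standing integrability of $h_t$ needed for the variances and cubic integrals below to be finite.

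For the first assertion I would write $\widetilde{X}_t^{\epsilon}=I_1(g_\epsilon)$ with $g_\epsilon=\widetilde{\sigma}_t(\epsilon)^{-1}h_t1_{[0,t]\times\{|x|<\epsilon\}}$. The first term of (\ref{poissoncase}) is then $E|1-\Lqnorm{g_\epsilon}{\mu}^2|$, and because $\widetilde{\sigma}_t(\epsilon)^2$ is defined to be exactly $\Lqnorm{h_t1_{[0,t]\times\{|x|<\epsilon\}}}{\mu}^2$, one gets $\Lqnorm{g_\epsilon}{\mu}^2=1$, so this term vanishes identically. The second term is $\int|x|\,|g_\epsilon|^3\,d\mu$; inserting $d\mu=x^2\,d\nu\,ds$ and using $|x|\cdot x^2=|x|^3$ turns it into precisely the ratio on the left of (\ref{smalljumpsconv0}). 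Hypothesis (\ref{smalljumpsconv0}) forces it to $0$, whence $d_W(\widetilde{X}_t^{\epsilon},\widetilde{Z})\to0$ and convergence in law follows.

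For the second assertion the extra input is $h_t(s,x)=h_t(s)$, which lets the $x$- and $s$-integrations factor. First I would compute $\mathrm{Var}[\widehat{X}_t^{\epsilon}]=\widehat{\sigma}(\epsilon)^{-2}\bigl(\int_0^t h_t(s)^2\,ds\bigr)\bigl(\int_{\{|x|<\epsilon\}}x^2\,d\nu\bigr)=\int_0^t h_t(s)^2\,ds=:\Sigma^2$, which matches $\mathrm{Var}[\widehat{W}(h_t)]$ by the Lebesgue covariance structure of $\widehat{W}$. Since the upper bound compares against a \emph{standard} normal, I would normalize and set $F=\widehat{X}_t^{\epsilon}/\Sigma=I_1(\widehat{g}_\epsilon)$ with $\widehat{g}_\epsilon=(\Sigma\widehat{\sigma}(\epsilon))^{-1}h_t1_{[0,t]\times\{|x|<\epsilon\}}$, so that $F$ has unit variance and the first term of (\ref{poissoncase}) again vanishes. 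The second term factors as $\frac{1}{\Sigma^3}\bigl(\int_0^t|h_t(s)|^3\,ds\bigr)\cdot\frac{\int_{\{|x|<\epsilon\}}|x|^3\,d\nu}{\widehat{\sigma}(\epsilon)^3}$, whose last factor tends to $0$ by (\ref{smalljumpsconv1}) while the constant in front is finite; thus $d_W(F,\widetilde{Z})\to0$. Multiplying back by $\Sigma$ and using that the limit law is Gaussian with variance $\Sigma^2$ identifies the limit as $\widehat{W}(h_t)$.

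There is no serious analytic obstacle here: once the first-chaos simplifications are in place the computation is routine, and the only delicate points are bookkeeping ones. I would make sure the normalizing constants $\widetilde{\sigma}_t(\epsilon)$ and $\Sigma\widehat{\sigma}(\epsilon)$ are chosen so that the relevant $L^2_\mu$-norm is exactly $1$, killing the leading term of (\ref{poissoncase}) identically rather than merely asymptotically; I would record the integrability assumptions $\int_0^t h_t(s)^2\,ds\in(0,\infty)$ and $\int_0^t|h_t(s)|^3\,ds<\infty$ guaranteeing $\Sigma^2$ and the cubic factor are finite; and I would close each part by invoking the standard fact that convergence in the Wasserstein distance to $0$ implies convergence in distribution.
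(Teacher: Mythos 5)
Your treatment of the first statement is correct and is essentially the paper's own proof: for $F=I_1(g)$ the identities $\boldsymbol{D}F=g$ and $-\boldsymbol{D}L^{-1}F=g$ make the first term of (\ref{poissoncase}) vanish exactly, since $\Lqnorm{g_\epsilon}{\mu}^2=1$ by the choice of $\widetilde{\sigma}_t(\epsilon)$, and the second term, after inserting $d\mu=x^2\,d\nu\,ds$, is precisely the ratio in (\ref{smalljumpsconv0}).

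The second statement is where your proof has a genuine gap, and it is exactly the gap the paper itself warns about in the remark following this theorem. Your argument establishes only that, for each fixed $t$, the real random variable $\widehat{X}_t^{\epsilon}$ converges in law to $\mathcal{N}\bigl(0,\int_0^t h_t(s)^2\,ds\bigr)$. But the assertion $\widehat{X}_t^{\epsilon}\stackrel{law}{\longrightarrow}\widehat{W}(h_t)$ is meant as convergence to the Gaussian family $\widehat{W}$, i.e.\ all finite-dimensional vectors $\bigl(\widehat{X}_{t_1}^{\epsilon},\dots,\widehat{X}_{t_d}^{\epsilon}\bigr)$ must converge jointly to $\bigl(\widehat{W}(h_{t_1}),\dots,\widehat{W}(h_{t_d})\bigr)$; this is what the subsequent fractional L\'evy process application uses, since simulating paths requires the joint law. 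Matching one-dimensional marginals plus matching covariances (one has $E\bigl[\widehat{X}_t^{\epsilon}\widehat{X}_s^{\epsilon}\bigr]=E\bigl[\widehat{W}(h_t1_{[0,t]})\widehat{W}(h_s1_{[0,s]})\bigr]$ for \emph{every} $\epsilon$) does not imply that the limiting joint distribution is Gaussian, so your step ``multiplying back by $\Sigma$ \dots identifies the limit as $\widehat{W}(h_t)$'' identifies only the single-time distribution, not the process. The paper closes this gap by invoking the multidimensional Stein-type bound of Peccati and Zheng, Lemma \ref{multdimcond}: writing $\widehat{X}_{t_i}^{\epsilon}=I_1\bigl(\widehat{\sigma}(\epsilon)^{-1}h_{t_i}1_{[0,t_i]\times\{\abs{x}<\epsilon\}}\bigr)$, one checks first that $K_\epsilon(i,j)=C(i,j)$ identically in $\epsilon$, and second that $\Lqnorm{\abs{x}^{\frac{1}{2}}\abs{\widehat{\sigma}(\epsilon)^{-1}h_{t_i}1_{[0,t_i]\times\{\abs{x}<\epsilon\}}}^{\frac{3}{2}}}{\mu}^2=\bigl(\int_0^{t_i}\abs{h_{t_i}(s)}^3ds\bigr)\,\widehat{\sigma}(\epsilon)^{-3}\int_{\{\abs{x}<\epsilon\}}\abs{x}^3d\nu(x)\rightarrow0$ by (\ref{smalljumpsconv1}). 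To repair your proof you must either invoke that multivariate lemma, or redo your Wasserstein computation for vectors; the scalar bound (\ref{poissoncase}) alone cannot deliver the joint convergence.
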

\begin{proof}
To prove the first statement, it is enough to verify that the upper bound (\ref{poissoncase}) goes to zero as $\epsilon$ goes to zero. Notice that
$$\Lqnorm{\boldsymbol{D}X_t^{\epsilon}}{\mu}^2=\Lqnorm{\widetilde{\sigma}_t(\epsilon)^{-1}h_t1_{[0,t]\times\{\abs{x}\leq \epsilon\}}}{\mu}^2=1$$
so $E\abs{1-\Lqprod{\boldsymbol{D}X_t^{\epsilon}}{-\boldsymbol{D}L^{-1}X_t^{\epsilon}}{\mu}}=\abs{1-\Lqnorm{\boldsymbol{D}X_t^{\epsilon}}{\mu}^2}=0$. On the other hand,
$$E\left[\Lqprod{\abs{x\bigl(\boldsymbol{D}\widetilde{X}_t^{\epsilon}\bigr)^2}}{\abs{\boldsymbol{D}L^{-1}\widetilde{X}_t^{\epsilon}}}{\mu}\right]=E\left[\Lqprod{\abs{x}}{\abs{\boldsymbol{D}\widetilde{X}_t^{\epsilon}}^3}{\mu}\right]=\frac{\dint_{[0,t]\times\{\abs{x}<\epsilon\}}\abs{xh_t(s,x)}^3d\nu(x)ds}{\widetilde{\sigma}_t(\epsilon)^3}$$
So bound (\ref{poissoncase}) goes to zero as $\epsilon$ goes to zero if (\ref{smalljumpsconv0}) is true.
To prove the second statement, it is necessary to prove that for any times $\{t_1,\dots,t_d\}$ the random vector $\bigl(\widehat{X}_{t_1}^{\epsilon},\dots,\widehat{X}_{t_d}^{\epsilon}\bigr)\stackrel{law}{\longrightarrow}\bigl(\widehat{W}(h_{t_1}),\dots,\widehat{W}(h_{t_d})\bigr)$. This is very easy to achieve thanks to Lemma \ref{multdimcond} from remark \ref{multidimensional}, because only two conditions need to be checked. Notice that for all $\epsilon$
\begin{align*}
\hspace{-2cm}K_\epsilon(i,j)=&E\bigl[\widehat{X}_{t_i}^{\epsilon}\widehat{X}_{t_j}^{\epsilon}\bigr]=\Lqprod{\widehat{\sigma}(\epsilon)^{-1}h_{t_i}1_{[0,t_i]\times\{\abs{x}<\epsilon\}}}{\widehat{\sigma}(\epsilon)^{-1}h_{t_j}1_{[0,t_j]\times\{\abs{x}<\epsilon\}}}{\mu}\\
=&\Lqprod{h_{t_i}1_{[0,t_i]}}{h_{t_j}1_{[0,t_j]}}{}=E\bigl[\widehat{W}(h_{t_i}1_{[0,t_i]})\widehat{W}(h_{t_j}1_{[0,t_j]})\bigr]=C(i,j)
\end{align*}
so trivially $K_\epsilon(i,j)\rightarrow C(i,j)$ as $\epsilon\to\infty$. And for the second condition we have that
$$\Lqnorm{\abs{x}^{\frac{1}{2}}\abs{\widehat{\sigma}(\epsilon)^{-1}h_{t_i}1_{[0,t_i]\times\{\abs{x}<\epsilon\}}}^{\frac{3}{2}}}{\mu}^2=\int_0^{t_i}\abs{h_{s}}^3ds\underbrace{\frac{\int_{\{\abs{x}<\epsilon\}}\abs{x}^3d\nu(x)}{\widehat{\sigma}(\epsilon)^3}}_{\rightarrow0 \text{ by (\ref{smalljumpsconv1})}}\xrightarrow[\epsilon\rightarrow0]{}0$$
Since both conditions were fulfilled, the conclusion of the theorem follows.
\end{proof}
\begin{preremark}
Notice that if $h_t(s,x)=h_t(s)$ then
$$\widetilde{\sigma}_t(\epsilon)^2=\dint_{[0,t]\times\{\abs{x}<\epsilon\}}h_t(s)^2x^2d\nu(x)ds=\int_0^th_t(s)^2ds\int_{\{\abs{x}<\epsilon\}}x^2d\nu(x)=\int_0^th_t(s)^2ds\widehat{\sigma}(\epsilon)^2$$
so it immediately follows that (\ref{smalljumpsconv0}) is true if and only if (\ref{smalljumpsconv1}) is true. Assuming then, that (\ref{smalljumpsconv1}) is true, it may appear by the first part of Theorem \ref{smalljumpsconv}, that we can conclude the second statement, i.e., $\bigl(\int_0^th_t(s)^2ds\bigr)^{-\frac{1}{2}}\widehat{X}_t^{\epsilon}=\widetilde{X}_t^{\epsilon}\stackrel{law}{\longrightarrow}\widetilde{Z}\sim\mathcal{N}(0,1)$ as $\epsilon\rightarrow0$ which is equivalent to $\widehat{X}_t^{\epsilon}\stackrel{law}{\longrightarrow}\widehat{Z}\sim\mathcal{N}\bigl(0,\int_0^th_t(s)^2ds\bigr)$. Also, since $E\bigl[\widehat{X}_t^{\epsilon}\widehat{X}_s^{\epsilon}\bigr]=E\bigl[\widehat{W}(h_t1_{[0,t]})\widehat{W}(h_s1_{[0,s]})\bigr]$ for all $\epsilon$, $\widehat{X}_t^{\epsilon}$ and $\widehat{W}(h_t1_{[0,t]})$ coincide in their covariance structures. But this is not enough to prove that $\widehat{X}_t^{\epsilon}\stackrel{law}{\longrightarrow}\widehat{W}(h_t)$; it only proves that, marginally (and for fixed $t$), $\widehat{X}_t^{\epsilon}$ converges to a normal random variable, which does not imply that the joint distribution for all times is normally distributed. Then the conclusion does not follows from there. Hence the neccessity of Lemma \ref{multdimcond}.
\end{preremark}

\subsubsection{Example: Fractional L$\acute{\text{e}}$vy Process}

Condition (\ref{smalljumpsconv1}) is quite easy to be verified. In fact, the measure $d\nu(x)=\abs{x}^{-(2+\delta)}1_{\{-a\leq x\leq b\}}dx$ for $\delta\in(-1,1)$ and $a,b>0$ (no jumps bigger than $b$ or smaller than $-a$) is such that (\ref{smalljumpsconv1}) is fulfilled. To check this, note that $\int_{\{\abs{x}<\epsilon\}}\abs{x}^3d\nu(x)=\frac{2\epsilon^{2-\delta}}{(2-\delta)}$ and $\widehat{\sigma}(\epsilon)^3=\bigl(\frac{2\epsilon^{1-\delta}}{(1-\delta)}\bigr)^{\frac{3}{2}}$, so $\widehat{\sigma}(\epsilon)^{-3}\int_{\{\abs{x}<\epsilon\}}\abs{x}^3d\nu(x)=O(\epsilon^{\frac{1+\delta}{2}})$ and (\ref{smalljumpsconv1}) is true. So, for the example, assume that the measure $\nu$ is such that (\ref{smalljumpsconv1}) holds.

\begin{enumerate}

\item {\bf Fractional L$\acute{\text{e}}$vy Process (FLP):}\\
There are two ways to represent a fractional Brownian motion as an integral of a kernel with respect to Brownian motion, and both deliver the same process (see \cite{Celine} for a thorough explanation). One is the so-called Mandelbrot-Van Ness representation which is an integral over the whole real line with respect to a two sided Brownian motion. The other, is the so-called Molchan-Golosov representation which is an integral over a compact interval. In the L$\acute{\text{e}}$vy case, it is proved in \cite{Heikki} that these representations delivers different processes with very different characteristics. Because of this ``non-uniqueness'', the FLP generated by the Mandelbrot-Van Ness representation is called FLPMvN, and the one generated by the Molchan-Golosov representation is called FLPMG. It is known that FLP's have the same covariance structure as FBM. The advantage of FLPMvN over FLPMG is that the former is stationary and the latter is not (in general), as is shown in \cite{Heikki}. Nevertheless, since FLPMG is derived on a compact interval, Malliavin calculus can be applied to it.

Consider $X_t$ as an FLPMG, that is, $X_t=I_1(K^H_t)$ where $$\Lprod{K^H_t}{K^H_s}{}=\frac{1}{2}\bigl(\abs{t}^{2H}+\abs{s}^{2H}-\abs{t-s}^{2H}\bigr)$$
According to Theorem \ref{smalljumpsconv}, since (\ref{smalljumpsconv1}) is true and $h_t(s,x)=K^H_t(s)$, it follows that $X_t^{\epsilon}\stackrel{law}{\longrightarrow}\widehat{W}(K^H_t)$ as $\epsilon \to \infty$. But $\widehat{W}(K^H_t)=B^H_t$ is a fractional Brownian motion. In this case, we conclude that in order to simulate the paths of an FLPMG $X_t$, we just need to fix $\epsilon$ small enough, and simulate the finitely many jumps part $N_t^{\epsilon}=I_1(K^H_t1_{\{\abs{x}\geq\epsilon\}})$ along with an (independent) FBM part $B^H_t=\widehat{W}(K^H_t)$, because $X_t\stackrel{\text{law}}{\approx}N_t^{\epsilon}+\widehat{\sigma}(\epsilon)B^H_t$.

\end{enumerate}

\subsection{The Wiener-Poisson space case:\\ Product of O-U processes}

Finally, the second order Poincar$\acute{\text{e}}$ inequality developed in the combined space is used to obtain a CLT for mixed processes. First, notice that if we have a double Ornstein-Uhlenbeck (O-U) process as a sum of a Wiener O-U process $Y_t$ plus a Poisson O-U process $Z_t$ (independent of $Y_t$), it can be proved (in two different ways) that the functional $F_T=T^{-\frac{1}{2}}\int_0^TY_t+Z_tdt$ converges to a normal random variable as $T\to\infty$. The first way is to seperate $F_T$ to the terms $T^{-\frac{1}{2}}\int_0^TY_tdt$ and $T^{-\frac{1}{2}}\int_0^TZ_tdt$ and use the inequalities (\ref{wienercase}) and (\ref{poissoncase}) respectively to prove that each part goes to a normal. The other method is to use inequality (\ref{mainineq1}) and just do one computation. The second way is clearly faster (since the kernels are the same). This is one advantage of having the inequality in the combined space.

\bigskip

For our example, lets focus on a process for which we cannot use either of the inequalities, (\ref{wienercase}) nor (\ref{poissoncase}), to prove a CLT. First, for simplicity's sake (to avoid dealing with constants), assume that the triplet for the underlying L$\acute{\text{e}}$vy process is given by $(0,1,\nu)$, where $\int_\RR x^2d\nu(x)=1$. Moreover, assume that $\int_\RR \abs{x}^3d\nu(x)<\infty$ and $\int_{\RR_0}x^4d\nu(x)<\infty$. Let $Y_t=\int_0^t\sqrt{2\lambda}e^{-\lambda(t-s)}dW_s$ and  $Z_t=\dint_{[0,t]\times\RR_0}\sqrt{2\lambda}e^{-\lambda(t-s)}xd\widetilde{N}(s,x)$, so, $Y_t$ is a Wiener O-U process and $Z_t$ is a Poisson (pure jump) O-U process. If $h_t(s)=\sqrt{2\lambda}e^{-\lambda(t-s)}$ then the double O-U process mentioned above is just $Y_t+Z_t=I_1(h_t)$. Now, define $h_t^{(0)}(s,x)=h_t(s)1_{\{x=0\}}(x)$ and $h_t^{(1)}(s,x)=h_t(s)1_{\{x\neq0\}}(x)$, then $Y_t=I_1\bigl(h_t^{(0)}\bigr)$ and $Z_t=I_1\bigl(h_t^{(1)}\bigr)$. Notice that due to the normalization of the L$\acute{\text{e}}$vy triplet we have that $C(t,s)=\Lqprod{h_t^{(0)}}{h_s^{(0)}}{\mu}=\Lqprod{h_t^{(1)}}{h_s^{(1)}}{\mu}$. The goal of this subsection is to prove that $F_T=T^{-\frac{1}{2}}\int_0^TY_tZ_tdt\stackrel{law}{\longrightarrow}Z\sim\mathcal{N}(0,\Sigma^2)$ as $T\to\infty$.

\bigskip

Since $h_t^{(0)}$ and $h_t^{(1)}$ have disjoint supports $\bigl($and using the product formula (\ref{prodint})$\bigr)$, $Y_tZ_t=I_2\bigl(h_t^{(0)}\widetilde{\otimes}h_t^{(1)}\bigr)$, and by Fubini $F_T=I_2\bigl(T^{-\frac{1}{2}}\int_{\cdot\vee\cdot}^Th_t^{(0)}\widetilde{\otimes}h_t^{(1)}dt\bigr)$. Hence $F_T$ lies in the $2^{\text{nd}}$ chaos. According to Corollary \ref{mainqchaos} we just need to check conditions (\ref{2cond1}), (\ref{2cond2}), (\ref{2cond3}), (\ref{2cond4}) and (\ref{2cond5}).
\begin{itemize}

\item {\bf Expectation of the First Derivative's Norm:}\\
\begin{itemize}

\item First Malliavin Derivative:
$$\boldsymbol{D}_zF_T=T^{-\frac{1}{2}}\int_0^TI_1\bigl(h_t^{(0)}\bigr)h_t^{(1)}(z)+I_1\bigl(h_t^{(1)}\bigr)h_t^{(0)}(z)dt$$

\item Norm of the First Malliavin Derivative:
\begin{align*}
\Lqnorm{\boldsymbol{D}_zF_T}{\mu}^2=&T^{-1}\int_{[0,T]^2}\Lqprod{I_1\bigl(h_t^{(0)}\bigr)h_t^{(1)}(z)+I_1\bigl(h_t^{(1)}\bigr)h_t^{(0)}(z)}{I_1\bigl(h_s^{(0)}\bigr)h_s^{(1)}(z)+I_1\bigl(h_s^{(1)}\bigr)h_s^{(0)}(z)}{\mu}dtds\\
=&T^{-1}\int_{[0,T]^2}I_1\bigl(h_t^{(0)}\bigr)I_1\bigl(h_s^{(0)}\bigr)\Lqprod{h_t^{(1)}}{h_s^{(1)}}{\mu}+I_1\bigl(h_t^{(1)}\bigr)I_1\bigl(h_s^{(1)}\bigr)\Lqprod{h_t^{(0)}}{h_s^{(0)}}{\mu}dtds\\
=&T^{-1}\int_{[0,T]^2}\bigl(I_1\bigl(h_t^{(0)}\bigr)I_1\bigl(h_s^{(0)}\bigr)+I_1\bigl(h_t^{(1)}\bigr)I_1\bigl(h_s^{(1)}\bigr)\bigr)C(t,s)dtds
\end{align*}
then,
\begin{align*}
\Lqnorm{\boldsymbol{D}_zF_T}{\mu}^4\leq& 2T^{-2}\biggl[\biggl(\int_{[0,T]^2}I_1\bigl(h_t^{(0)}\bigr)I_1\bigl(h_s^{(0)}\bigr)C(t,s)dtds\biggr)^2+\biggl(\int_{[0,T]^2}I_1\bigl(h_t^{(1)}\bigr)I_1\bigl(h_s^{(1)}\bigr)C(t,s)dtds\biggr)^2\biggr]\\
=&2T^{-2}\int_{[0,T]^4}\biggl[\prod_{i=1}^4I_1\bigl(h_{t_i}^{(0)}\bigr)+\prod_{i=1}^4I_1\bigl(h_{t_i}^{(1)}\bigr)\biggr]C(t_1,t_2)C(t_3,t_4)d\vec{t}
\end{align*}

\item Expectation of the First Derivative's Norm:\\
Notice that by the product formula (\ref{prodint}) we have that
$$\prod_{i=1}^4I_1\bigl(h_{t_i}^{(0)}\bigr)=\biggl[\Lqprod{h_{t_1}^{(0)}}{h_{t_2}^{(0)}}{\mu}+I_2\bigl(h_{t_1}^{(0)}\widetilde{\otimes}h_{t_2}^{(0)}\bigr)\biggr]\biggl[\Lqprod{h_{t_3}^{(0)}}{h_{t_4}^{(0)}}{\mu}+I_2\bigl(h_{t_3}^{(0)}\widetilde{\otimes}h_{t_4}^{(0)}\bigr)\biggr]$$
and
$$ \prod_{i=1}^4I_1\bigl(h_{t_i}^{(1)}\bigr)=\biggl[\Lqprod{h_{t_1}^{(1)}}{h_{t_2}^{(1)}}{\mu}+I_1\bigl(h_{t_1}^{(1)}\otimes_0^1h_{t_2}^{(1)}\bigr)+I_2\bigl(h_{t_1}^{(1)}\widetilde{\otimes}h_{t_2}^{(1)}\bigr)\biggr]\biggl[\Lqprod{h_{t_3}^{(1)}}{h_{t_4}^{(1)}}{\mu}+I_1\bigl(h_{t_3}^{(1)}\otimes_0^1h_{t_4}^{(1)}\bigr)+I_2\bigl(h_{t_3}^{(1)}\widetilde{\otimes}h_{t_4}^{(1)}\bigr)\biggr]$$
so,
$$E\biggl[\prod_{i=1}^4I_1\bigl(h_{t_i}^{(0)}\bigr)\biggr]=C(t_1,t_2)C(t_3,t_4)+\Lprod{h_{t_1}^{(0)}\widetilde{\otimes}h_{t_2}^{(0)}}{h_{t_3}^{(0)}\widetilde{\otimes}h_{t_4}^{(0)}}{\mu}$$
and
$$E\biggl[\prod_{i=1}^4I_1\bigl(h_{t_i}^{(1)}\bigr)\biggr]=C(t_1,t_2)C(t_3,t_4)+\Lprod{h_{t_1}^{(1)}\otimes_0^1h_{t_2}^{(1)}}{h_{t_3}^{(1)}\otimes_0^1h_{t_4}^{(1)}}{\mu}+\Lprod{h_{t_1}^{(1)}\widetilde{\otimes}h_{t_2}^{(1)}}{h_{t_3}^{(1)}\widetilde{\otimes}h_{t_4}^{(1)}}{\mu}$$
Also notice that,
$$C(t,s)=\int_0^{t\wedge s}2\lambda e^{-\lambda(t+s-2u)}du=e^{-\lambda\abs{t-s}}-e^{-\lambda(t+s)}\leq e^{-\lambda\abs{t-s}}\leq1$$
$$\underbrace{\Lprod{h_{t_1}^{(1)}\otimes_0^1h_{t_2}^{(1)}}{h_{t_3}^{(1)}\otimes_0^1h_{t_4}^{(1)}}{\mu}}_{\Lprod{xh_{t_1}^{(1)}h_{t_2}^{(1)}}{xh_{t_3}^{(1)}h_{t_4}^{(1)}}{\mu}}=\int_0^{\min\{t_1,t_2,t_3,t_4\}}\int_{\RR_0}x^44\lambda^2e^{-\lambda(t_1+t_2+t_3+t_4-4u)}d\nu(x)du\leq\lambda\int_{\RR_0}x^4d\nu(x)$$
$$\Lprod{h_{t_1}^{(0)}\widetilde{\otimes}h_{t_2}^{(0)}}{h_{t_3}^{(0)}\widetilde{\otimes}h_{t_4}^{(0)}}{\mu^{\otimes2}}=\Lprod{h_{t_1}^{(1)}\widetilde{\otimes}h_{t_2}^{(1)}}{h_{t_3}^{(1)}\widetilde{\otimes}h_{t_4}^{(1)}}{\mu^{\otimes2}}=\frac{C(t_1,t_3)C(t_2,t_4)+C(t_1,t_4)C(t_2,t_3)}{2}\leq1$$
Putting all this together we get,
\begin{align*}
E\bigl[\Lqnorm{\boldsymbol{D}F_T}{\mu}^4\bigr]=&2T^{-2}\int_{[0,T]^4}\biggl(E\biggl[\prod_{i=1}^4I_1\bigl(h_{t_i}^{(0)}\bigr)\biggr]+E\biggl[\prod_{i=1}^4I_1\bigl(h_{t_i}^{(1)}\bigr)\biggr]\biggr)C(t_1,t_2)C(t_3,t_4)d\vec{t}\\
\leq&2\biggl(4+\lambda\int_{\RR_0}x^4d\nu(x)\biggr)\biggl(T^{-1}\int_{[0,T]^2}C(t,s)dtds\biggr)^2\\
\leq&2\biggl(4+\lambda\int_{\RR_0}x^4d\nu(x)\biggr)\biggl(T^{-1}\int_{[0,T]^2}e^{-\lambda\abs{t-s}}dtds\biggr)^2\\
=&2\biggl(4+\lambda\int_{\RR_0}x^4d\nu(x)\biggr)\biggl(2T^{-1}\int_0^T\int_0^te^{-\lambda(t-s)}dsdt\biggr)^2\\
\leq&2\biggl(4+\lambda\int_{\RR_0}x^4d\nu(x)\biggr)\biggl(\frac{2}{\lambda}\biggr)^2
\end{align*}
\end{itemize}
All this proves that
\begin{align*}
\boxed{E\bigl[\Lqnorm{\boldsymbol{D}F_T}{\mu}^4\bigr]^{\frac{1}{4}}=O(1)\ \text{ as }\ T\to\infty}
\end{align*}

\item {\bf Expectation of the Cube of the First Derivative's Norm:}
\begin{itemize}

\item Cube of the First Malliavin Derivative:\\
Since $h_{t}^{(0)}(z)\cdot h_{s}^{(1)}(z)=0$ for all $z\in\RR^+\times\RR$ then,
$$\abs{\boldsymbol{D}F_T}^3=\abs{T^{-\frac{3}{2}}\int_{[0,T]^3}\biggl[\prod_{i=1}^3I_1\bigl(h_{t_i}^{(0)}\bigr)h_{t_i}^{(1)}+\prod_{i=1}^3I_1\bigl(h_{t_i}^{(1)}\bigr)h_{t_i}^{(0)}\biggr]d\vec{t}}$$

\item Norm of the Cube of the First Malliavin Derivative:\\
Since $x\cdot h_{t}^{(0)}(z)=0$ for all $z=(t,x)\in\RR^+\times\RR$ then,
$$\Lqprod{\abs{x}}{\abs{\boldsymbol{D}F_T}^3}{\mu}\leq T^{-\frac{3}{2}}\int_{[0,T]^3}\prod_{i=1}^3\abs{I_1\bigl(h_{t_i}^{(0)}\bigr)}\Lqprod{\abs{x}}{\prod_{i=1}^3\abs{h_{t_i}^{(1)}}}{\mu}d\vec{t}$$

\item Expectation of the Cube of the First Derivative's Norm:\\
Notice that by H$\ddot{\text{o}}$lder we have,
\begin{align*}
E\biggl[\prod_{i=1}^3\abs{I_1\bigl(h_{t_i}^{(0)}\bigr)}\biggr]\leq& E\bigl[\bigl(\hspace{-.7cm}\overbrace{I_1\bigl(h_{t_1}^{(0)}\bigr)I_1\bigl(h_{t_2}^{(0)}\bigr)}^{\Lqprod{h_{t_1}^{(0)}}{h_{t_2}^{(0)}}{\mu}+I_2\bigl(h_{t_1}^{(0)}\widetilde{\otimes}h_{t_2}^{(0)}\bigr)}\hspace{-.7cm}\bigr)^2\bigr]^{\frac{1}{2}}E\bigl[\bigl(I_1\bigl(h_{t_3}^{(0)}\bigr)\bigr)^2\bigr]^{\frac{1}{2}}\\
\leq& \biggl(\abs{C(t_1,t_2)}+E\bigl[\bigl(I_2\bigl(h_{t_1}^{(0)}\widetilde{\otimes}h_{t_2}^{(0)}\bigr)\bigr)^2\bigr]^{\frac{1}{2}}\biggr)E\bigl[\bigl(I_1\bigl(h_{t_3}^{(0)}\bigr)\bigr)^2\bigr]^{\frac{1}{2}}\\
\leq&\biggl(\abs{C(t_1,t_2)}+\Lqnorm{h_{t_1}^{(0)}\otimes h_{t_2}^{(0)}}{\mu^{\otimes2}}\biggr)\Lqnorm{h_{t_3}^{(0)}}{\mu}\leq2\\
\end{align*}
and
\begin{align*}
\Lqprod{\abs{x}}{\prod_{i=1}^3\abs{h_{t_i}^{(1)}}}{\mu}=&\int_0^{\min\{t_1,t_2,t_3\}}(2\lambda)^{\frac{3}{2}}e^{-\lambda(t_1+t_2+t_3-3u)}du\int_{\RR_0}\abs{x}^3d\nu(x)\\
\leq&\frac{2\sqrt{2\lambda}}{3}\biggl(\int_{\RR_0}\abs{x}^3d\nu(x)\biggr)e^{-\lambda(t_1+t_+t_3-3\min\{t_1,t_2,t_3\})}
\end{align*}
Putting all together we get,
\begin{align*}
E\bigl[\Lqprod{\abs{x}}{\abs{\boldsymbol{D}F_T}^3}{\mu}\bigr]\leq& T^{-\frac{3}{2}}\int_{[0,T]^3}E\biggl[\prod_{i=1}^3\abs{I_1\bigl(h_{t_i}^{(0)}\bigr)}\biggr]\Lqprod{\abs{x}}{\prod_{i=1}^3\abs{h_{t_i}^{(1)}}}{\mu}d\vec{t}\\
\leq& \frac{4\sqrt{2\lambda}}{3}\biggl(\int_{\RR_0}\abs{x}^3d\nu(x)\biggr)T^{-\frac{3}{2}}\int_{[0,T]^3}e^{-\lambda(t_1+t_+t_3-3\min\{t_1,t_2,t_3\})}d\vec{t}\\
=&\frac{24\sqrt{2\lambda}}{3}\biggl(\int_{\RR_0}\abs{x}^3d\nu(x)\biggr)T^{-\frac{3}{2}}\int_0^T\int_0^t\int_0^se^{-\lambda(t+s-2u)}dudsdt\\
\leq& \frac{4\sqrt{2}\bigl(\int_{\RR_0}\abs{x}^3d\nu(x)\bigr)}{\lambda^{\frac{3}{2}}\sqrt{T}}=O(T^{-\frac{1}{2}})
\end{align*}

\end{itemize}
All this proves that
\begin{align*}
\boxed{E\bigl[\Lqprod{\abs{x}}{\abs{\boldsymbol{D}F}^3}{\mu}\bigr]\rightarrow0\ \text{ as } \ T\rightarrow\infty}
\end{align*}

\item {\bf Expectation of the Contraction's Norm:}
\begin{itemize}

\item Second Malliavin Derivative:
$$\boldsymbol{D}^2_{z_1,z_2}F_T=T^{-\frac{1}{2}}\int_0^Th_{t}^{(1)}(z_1)h_{t}^{(0)}(z_2)+h_{t}^{(0)}(z_1)h_{t}^{(1)}(z_2)dt$$

\item Contraction of order 1:
\begin{align*}
\boldsymbol{D}^2F_T\otimes_1\boldsymbol{D}^2F_T=&T^{-1}\int_{[0,T]}h_{t}^{(1)}(z_1)h_{s}^{(1)}(z_1)\overbrace{\Lqprod{h_{t}^{(0)}}{h_{s}^{(0)}}{\mu}}^{C(t,s)\leq1}+h_{t}^{(0)}(z_1)h_{s}^{(0)}(z_1)\overbrace{\Lqprod{h_{t}^{(1)}}{h_{s}^{(1)}}{\mu}}^{C(t,s)\leq1}dtds\\
\leq&T^{-1}\int_{[0,T]}h_{t}^{(1)}(z_1)h_{s}^{(1)}(z_1)+h_{t}^{(0)}(z_1)h_{s}^{(0)}(z_1)dtds
\end{align*}

\item Norm of the Contraction:
$$\Lqnorm{\boldsymbol{D}^2F_T\otimes_1\boldsymbol{D}^2F_T}{\mu}^2\leq T^{-2}\int_{[0,T]^4}\Lqprod{h_{t_1}^{(0)}h_{t_2}^{(0)}}{h_{t_3}^{(0)}h_{t_4}^{(0)}}{\mu}+\Lqprod{h_{t_1}^{(1)}h_{t_2}^{(1)}}{h_{t_3}^{(1)}h_{t_4}^{(1)}}{\mu}d\vec{t}$$

\item Expectation of the Contraction's Norm:\\
Notice that
$$\Lqprod{h_{t_1}^{(0)}h_{t_2}^{(0)}}{h_{t_3}^{(0)}h_{t_4}^{(0)}}{\mu}=\Lqprod{h_{t_1}^{(1)}h_{t_2}^{(1)}}{h_{t_3}^{(1)}h_{t_4}^{(1)}}{\mu}=\lambda \bigl(e^{-\lambda(t_1+t_2+t_3+t_4-4\min\{t_1,t_2,t_3,t_4\})}-e^{-\lambda(t_1+t_2+t_3+t_4)}\bigr)$$
so,
$$\int_{[0,T]^4}\Lqprod{h_{t_1}^{(i)}h_{t_2}^{(i)}}{h_{t_3}^{(i)}h_{t_4}^{(i)}}{\mu}\leq24\lambda\int_0^T\int_0^t\int_0^s\int_0^ue^{-\lambda(t+s+u-3v)}dvdudsdt\leq \frac{4T}{\lambda^2}$$
Putting all together we get,
$$E\bigl[\Lqnorm{\boldsymbol{D}^2F_T\otimes_1\boldsymbol{D}^2F_T}{\mu}^2\bigr]\leq 2T^{-2}\int_{[0,T]^4}\Lqprod{h_{t_1}^{(i)}h_{t_2}^{(i)}}{h_{t_3}^{(i)}h_{t_4}^{(i)}}{\mu}d\vec{t}\leq T^{-2}\frac{8T}{\lambda^2}=\frac{8}{T\lambda^2}$$

\end{itemize}
All this proves that,
\begin{align*}
\boxed{E\bigl[\Lqnorm{\boldsymbol{D}^2F_T\otimes_1\boldsymbol{D}^2F_T}{\mu}^2\bigr]\rightarrow0\ \text{ as } \ T\rightarrow\infty}
\end{align*}

\item {\bf Expectation of the Squared Second Derivative's Norm:}
\begin{itemize}

\item Square of the Second Malliavin Derivative:
$$(\boldsymbol{D}^2_{z_1,z_2}F_T)^2=T^{-1}\int_{[0,T]^2}h_t^{(0)}(z_1)h_s^{(0)}(z_1)h_t^{(1)}(z_2)h_s^{(1)}(z_2)+h_t^{(1)}(z_1)h_s^{(1)}(z_1)h_t^{(0)}(z_2)h_s^{(0)}(z_2)dtds$$

\item Inner Product of the Squared Second Malliavin Derivative:\\
Samely as above, $x\cdot h_t^{(0)}=0$, so
$$ \Lqprod{x}{(\boldsymbol{D}^2F_T)^2}{\mu}=T^{-1}\int_{[0,T]^2}h_t^{(0)}h_s^{(0)}\overbrace{\Lqprod{x}{h_t^{(1)}h_s^{(1)}}{\mu}}^{\int_{\RR_0}x^3d\nu(x)C(t,s)}dtds\leq \int_{\RR_0}\abs{x}^3d\nu(x)T^{-1}\int_{[0,T]^2}h_t^{(0)}h_s^{(0)}dtds$$

\item Expectation of the Squared Second Derivative's Norm:\\
By the above computations we have,
$$E\left[\Lqnorm{\Lqprod{x}{(\boldsymbol{D}^2F_T)^2}{\mu}}{\mu}^2\right]\leq\biggl(\int_{\RR_0}\abs{x}^3d\nu(x)\biggr)^2T^{-2}\int_{[0,T]^4}\Lqprod{h_{t_1}^{(0)}h_{t_2}^{(0)}}{h_{t_3}^{(0)}h_{t_4}^{(0)}}{\mu}d\vec{t}\leq\frac{4}{\lambda^2T}$$

\end{itemize}
All this proves that,
\begin{align*}
\boxed{E\biggl[\Lqnorm{\Lqprod{x}{(\boldsymbol{D}^2F)^2}{\mu}}{\mu}^2\biggr]\rightarrow0\ \text{ as } \ T\rightarrow\infty}
\end{align*}

\item {\bf Existence of the Variance:}\\
Since $F_T=I_2\bigl(T^{-\frac{1}{2}}\int_{\cdot\vee\cdot}^Th_t^{(0)}\widetilde{\otimes}h_t^{(1)}dt\bigr)$ then \begin{align*}
\text{Var}[F_T]=&\Lqnorm{T^{-\frac{1}{2}}\int_{\cdot\vee\cdot}^Th_t^{(0)}\widetilde{\otimes}h_t^{(1)}dt}{\mu^{\otimes2}}^2=T^{-1}\int_{[0,T]^2}\biggl(\int_{s_1\vee s_2}^T2\lambda e^{-\lambda(2t-s_1-s_2)}dt\biggr)^2ds_1ds_2\\
=& T^{-1}\int_{[0,T]^2}\bigl(e^{-\lambda\abs{s_1-s_2}}-e^{-\lambda(2T-s_1-s_2)}\bigr)^2ds_1ds_2\\
=& T^{-1}\int_{[0,T]^2}e^{-2\lambda\abs{s_1-s_2}}-2e^{-2\lambda(T-s_1\wedge s_2)}+e^{-2\lambda(2T-s_1-s_2)}ds_1ds_2\\
=& T^{-1}\biggl(\frac{T}{\lambda}-6\frac{(1-e^{-2\lambda T})}{4\lambda^2}+4Te^{-2\lambda T}+\frac{(1-e^{-2\lambda T})^2}{4\lambda^2}\biggr)=\frac{1}{\lambda}+O(T^{-1})
\end{align*}
All this proves that,
\begin{align*}
\boxed{\text{Var}[F_T]\rightarrow \frac{1}{\lambda}\in(0,\infty)\ \text{ exists as }\ T\to\infty}
\end{align*}

\end{itemize}
Since all five conditions are met, by Corollary \ref{mainqchaos} we have that $F_T\stackrel{law}{\longrightarrow}N\sim\mathcal{N}(0,\frac{1}{\lambda})$ as $T\to\infty$. Moreover, due to the quantitative property of the inequality, is possible to estimate (from the computations above) that the rate of convergence to normality is at least $O\bigl(T^{-\frac{1}{4}}\bigr)$, i.e., $d_W\bigl(\frac{F_T}{\sqrt{\text{Var}[F_T]}},N\bigr)=O\bigl(T^{-\frac{1}{4}}\bigr)$ as $T\to\infty$. This rate is similar to the one obtained for the linear functionals of Gaussian-subordinated fields with underlying process given by the increments of FBM or the fractional-driven O-U, when $H\in\bigl(0,\frac{1}{2}\bigr)$. I believe that the right rate of convergence for these processes should be $O\bigl(T^{-\frac{1}{2}}\bigr)$!

\end{document}